\newtheorem{theorem}{Theorem}[section]
\newtheorem{definition}[theorem]{Definition}
\newtheorem{example}[theorem]{Example}
\newtheorem{lemma}[theorem]{Lemma}
\newtheorem{proposition}[theorem]{Proposition}
\newtheorem{remark}[theorem]{Remark}
\begin{document}
\title[\emph{UCF} for shape homology]{On the universal coefficients formula
for shape homology}
\author{Andrei V. Prasolov}
\address{Institute of Mathematics and Statistics\\
University of Troms\o , N-9037 Troms\o , Norway}
\email{andrei.prasolov@uit.no}
\urladdr{http://serre.mat-stat.uit.no/ansatte/andrei/Welcome.html}
\date{}
\subjclass[2010]{ Primary 55N35, 18G15; Secondary 18G40, 55U20}
\keywords{Universal coefficients, shape homology, pro-modules, pro-homology,
strong homology, balanced homology}

\begin{abstract}
In this paper it is investigated whether various shape homology theories
satisfy the Universal Coefficients Formula (\emph{UCF}). It is proved that
pro-homology and strong homology satisfy \emph{UCF} in the class $\mathbf{FAB%
}$ of finitely generated abelian groups, while they do not satisfy UCF in
the class $\mathbf{AB}$ of all abelian groups. Two new shape homology
theories (called \emph{UCF}-balanced) are constructed. It is proved that
balanced pro-homology satisfies UCF\ in the class $\mathbf{AB}$, while
balanced strong homology satisfies \emph{UCF} only in the class $\mathbf{FAB}
$.
\end{abstract}

\maketitle
\tableofcontents

\setcounter{section}{-1}

\section{Introduction}

The purpose of this paper is to investigate whether various shape homology
theories satisfy the Universal Coefficients Formula (\emph{UCF}). It is
proved (Theorem \ref{Th-UCF-FAB}) that pro-homology and strong homology
satisfy \emph{UCF} in the class $\mathbf{FAB}$ of finitely generated abelian
groups, while they do not satisfy \emph{UCF} in the class $\mathbf{AB}$ of
all abelian groups (Theorems \ref{Th-pro} and \ref{Th-strong}). Two new
shape homology theories (called \emph{UCF-balanced}, or simply \emph{balanced%
}) are constructed. It is proved that \emph{balanced pro-homology} satisfies 
\emph{UCF}\ in the class $\mathbf{AB}$ (Theorem \ref{Th-balanced-pro}). It
happens that our \emph{balanced strong homology} is not \textquotedblleft
balanced enough\textquotedblright . It satisfies \emph{UCF} in the class $%
\mathbf{FAB}$ (Theorem \ref{Th-UCF-FAB}), but not in the class $\mathbf{AB}$
(Theorem \ref{Th-balanced-strong}). The latter theorem is the most difficult
part of the paper. Two counter-examples are constructed. The first one is
simpler, but depends on the Continuum Hypothesis (in fact, on a weaker
assumption \textquotedblleft $d=\aleph _{1}$\textquotedblright ). Hence,
that example cannot be considered as a \textquotedblleft
final\textquotedblright\ counter-example. Another counter-example is much
more complicated, but does not depend on any extra assumption, therefore can
be considered as \textquotedblleft final\textquotedblright .

To deal with \emph{UCF}, one needs to develop the torsion functor $\mathbf{%
Tor}_{\ast }^{k}$ on the category of pro-modules, which is done in Section %
\ref{Sec-pro-modules}. The problem is that the category $\mathbf{Pro}\left(
k\right) $ of pro-modules \textbf{does not} have enough projectives (Remark %
\ref{Rem-not-enough-projectives}). It does have, however, enough \emph{%
quasi-projectives} (Proposition \ref{Prop-enough-quasi-projectives}). It
follows also that quasi-projectives are flat (Proposition \ref%
{Prop-quasi-projectives-are-flat}) provided $k$ is quasi-noetherian. The
above two facts allow defining the torsion functors using quasi-projective
resolutions (Proposition \ref{Prop-quasi-projective-resolution}).

To calculate the strong homology groups (balanced or non-balanced), one
needs spectral sequences from Section \ref{Sec-spectral}. Those spectral
sequences are of \emph{inverse limit type} (in contrast to the \emph{direct
limit type} sequences), and are concentrated in the I and IV quadrants. It
is not very easy to treat the convergence of such sequences. There are
several papers that deal with the convergence of inverse limit type spectral
sequences. See, e.g., the \emph{most general} treatment in \cite%
{Boardman-MR1718076}, Part II, spectral sequences for \emph{towers of
fibrations} and \emph{homotopy limits} in \cite{Bousfield-Kan-MR0365573}, \S %
IX.4 and \S XI.7, spectral sequences for homotopy limits of \emph{spectra}
in \cite{Thomason-MR826102}, 1.16, 5.44-5.48, spectral sequences for $\Gamma 
$\emph{-spaces} in \cite{Prasolov-Extraordinatory-MR1821856}, Theorem 2, and
a spectral sequence for strong homology in \cite{Prasolov-Spectral-MR1027513}%
. None of the approaches above suits 100\% our purposes. That is why in
Section \ref{Sec-spectral}, spectral sequences for homotopy limits of \emph{%
diagrams of chain complexes} are developed (Theorem \ref{Spectral-holimit}).
The most important result in that Section is Theorem \ref%
{Th-Spectral-holimit-pro-complex} where homotopy limits of \emph{%
pro-complexes} are treated.

Let $G\in \mathbf{Mod}\left( \mathbb{Z}\right) $ be an abelian group (see
the notations for $\mathbf{Mod}\left( k\right) $ and $\mathbf{Pro}\left(
k\right) $ from Example \ref{Ex-pro-categories} (\ref{Ex-Pro(k)})).
Throughout this paper, $h_{n}\left( \_,G\right) $ will be one of the
following homology theories:

\begin{enumerate}
\item Pro-homology $\mathbf{H}_{n}\left( X,G\right) \in \mathbf{Pro}\left( 
\mathbb{Z}\right) $\ (see Definition \ref{Def-pro-homology}).

\item Strong homology $\overline{H}_{n}\left( X,G\right) \in \mathbf{Mod}%
\left( \mathbb{Z}\right) $ (see Definition \ref{Def-strong-homology}).

\item Balanced pro-homology $\mathbf{H}_{n}^{b}\left( X,G\right) \in \mathbf{%
Pro}\left( \mathbb{Z}\right) $ (see Definition \ref%
{Def-balanced-pro-homology}).

\item Balanced strong homology $\overline{H}_{n}^{b}\left( X,G\right) \in 
\mathbf{Mod}\left( \mathbb{Z}\right) $ (see Definition \ref%
{Def-balanced-strong-homology}).
\end{enumerate}

Let $\otimes _{\mathbb{Z}}$ denote either the usual tensor product 
\begin{equation*}
\otimes _{\mathbb{Z}}:\mathbf{Mod}\left( \mathbb{Z}\right) \mathbf{\times Mod%
}\left( \mathbb{Z}\right) \rightarrow \mathbf{Mod}\left( \mathbb{Z}\right)
\end{equation*}
or the tensor product 
\begin{equation*}
\otimes _{\mathbb{Z}}:\mathbf{Pro}\left( \mathbb{Z}\right) \times \mathbf{Mod%
}\left( \mathbb{Z}\right) \longrightarrow \mathbf{Pro}\left( \mathbb{Z}%
\right)
\end{equation*}%
from Theorem \ref{Th-tensor-product}.

\begin{proposition}
\label{Prop-pairing}For each of the four theories there exists a natural (on 
$X$ and $G$) pairing%
\begin{equation*}
h_{n}\left( X,\mathbb{Z}\right) \otimes _{\mathbb{Z}}G\longrightarrow
h_{n}\left( X,G\right) .
\end{equation*}
\end{proposition}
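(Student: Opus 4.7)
The plan is to construct the pairing at the chain level and then pass to (pro-)homology, taking as local model the classical natural chain isomorphism $C_{\ast }(K)\otimes _{\mathbb{Z}}G\cong C_{\ast }(K;G)$ for a simplicial complex $K$, which induces on homology the ordinary UCF comparison map $H_{n}(K)\otimes _{\mathbb{Z}}G\to H_{n}(K;G)$. I would lift this through the resolution, and in the strong cases through the homotopy limit, that defines each of the four theories.

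For pro-homology (ordinary or balanced), $\mathbf{H}_{n}(X,-)$ is the pro-homology of a pro-chain complex $\mathbf{C}_{\ast }(X;-)$ obtained from an appropriate (ordinary or balanced) resolution of $X$. Applying $-\otimes _{\mathbb{Z}}G$ level-wise gives, using the pro-tensor product of Theorem \ref{Th-tensor-product}, a natural isomorphism $\mathbf{C}_{\ast }(X;\mathbb{Z})\otimes _{\mathbb{Z}}G\cong \mathbf{C}_{\ast }(X;G)$ of pro-chain complexes. Composing with the level-wise UCF chain map, that is, the natural morphism $H_{n}(\mathbf{C}_{\ast }(X;\mathbb{Z}))\otimes _{\mathbb{Z}}G\to H_{n}(\mathbf{C}_{\ast }(X;\mathbb{Z})\otimes _{\mathbb{Z}}G)$ in $\mathbf{Pro}(\mathbb{Z})$, yields the pairing $\mathbf{H}_{n}(X,\mathbb{Z})\otimes _{\mathbb{Z}}G\to \mathbf{H}_{n}(X,G)$; the balanced version is obtained identically from the balanced resolution.

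For strong homology (ordinary or balanced), $\overline{H}_{n}(X,G)$ is the $n$-th homology of a homotopy limit $\operatorname{holim}\mathbf{C}_{\ast }(X;G)$. The extra ingredient needed is the canonical comparison morphism $\operatorname{holim}(\mathbf{C}_{\ast })\otimes _{\mathbb{Z}}G\to \operatorname{holim}(\mathbf{C}_{\ast }\otimes _{\mathbb{Z}}G)$, which exists because tensoring with a fixed abelian group commutes with limits only up to a natural transformation (apply $-\otimes _{\mathbb{Z}}G$ to the limiting cone and use the universal property of the target). Composing this map with the level-wise isomorphism above and with the chain-level UCF map on the target produces the pairing $\overline{H}_{n}(X,\mathbb{Z})\otimes _{\mathbb{Z}}G\to \overline{H}_{n}(X,G)$, and the balanced strong case is parallel.

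The main obstacle I anticipate is not conceptual but organizational: one has to verify compatibility of each constructed map with the bonding maps of the pro-system, with the differential of the total complex used to compute $\operatorname{holim}$ (as in Section \ref{Sec-spectral}), and with the notion of morphism of shape used for $X$, so that the resulting pairing is indeed natural in both variables. Since the chain-level pairing is strictly natural and the comparison $\operatorname{holim}(-)\otimes _{\mathbb{Z}}G\to \operatorname{holim}(-\otimes _{\mathbb{Z}}G)$ is induced by a universal property, all of these compatibilities should reduce to direct inspection once a specific model for the resolutions and the homotopy limit is fixed.
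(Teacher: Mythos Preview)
Your proposal contains a genuine error that stems from conflating the two tensor products in the paper. You write that ``applying $-\otimes_{\mathbb{Z}}G$ level-wise gives, using the pro-tensor product of Theorem~\ref{Th-tensor-product}, a natural isomorphism $\mathbf{C}_{\ast}(X;\mathbb{Z})\otimes_{\mathbb{Z}}G\cong\mathbf{C}_{\ast}(X;G)$.'' But level-wise tensoring is the \emph{weak} tensor product $\widetilde{\otimes}_{\mathbb{Z}}$, not the pro-tensor product $\otimes_{\mathbb{Z}}$ of Theorem~\ref{Th-tensor-product}, and in general $\mathbf{C}_{\ast}(X,\mathbb{Z})\otimes_{\mathbb{Z}}G\not\cong\mathbf{C}_{\ast}(X,\mathbb{Z})\widetilde{\otimes}_{\mathbb{Z}}G=\mathbf{C}_{\ast}(X,G)$; by Theorem~\ref{Th-map-to-weak-products} there is only a natural comparison morphism, which is an isomorphism only for finitely presented $G$. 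This distinction is precisely why the balanced and non-balanced theories differ, so your claimed isomorphism would collapse that distinction. Your argument as written therefore works only for the balanced theories, where $\mathbf{C}_{\ast}^{b}(X,G):=\mathbf{C}_{\ast}(X,\mathbb{Z})\otimes_{\mathbb{Z}}G$ by definition and the pairing is exactly the map of Proposition~\ref{Prop-pairing-pro-complexes}.

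The fix is easy: for the non-balanced theories, compose with the comparison morphism $\otimes_{\mathbb{Z}}\to\widetilde{\otimes}_{\mathbb{Z}}$ of Theorem~\ref{Th-map-to-weak-products} rather than asserting an isomorphism. The paper, by contrast, sidesteps the issue entirely: for each $a\in G$ it uses the level map $c\mapsto c\otimes a$ to produce a morphism $\overline{a}_{\ast}:h_{\ast}(X,\mathbb{Z})\to h_{\ast}(X,G)$ (passing through $\underleftarrow{\mathbf{holim}}$ in the strong cases), checks additivity in $a$, and then invokes the adjunction defining $\otimes_{\mathbb{Z}}$ (Theorem~\ref{Th-tensor-product}) to obtain the pairing. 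This element-wise route never needs to compare the two tensor products on pro-complexes, and it handles all four theories uniformly without the extra comparison step your approach would require.
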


\begin{remark}
Since all the four theories are defined on the strong shape category $%
\mathbf{SSh}$, \textquotedblleft natural\textquotedblright\ here means that
for each $n\in \mathbb{Z}$ the pairing above is a morphism of functors%
\begin{equation*}
h_{n}\left( ?,\mathbb{Z}\right) \otimes _{\mathbb{Z}}?\longrightarrow
h_{n}\left( ?,?\right) :\mathbf{SSh}\times \mathbf{Mod}\left( \mathbb{Z}%
\right) \longrightarrow \mathbf{C}
\end{equation*}%
where $\mathbf{C}$ is either $\mathbf{Mod}\left( \mathbb{Z}\right) $ or $%
\mathbf{Pro}\left( \mathbb{Z}\right) $.
\end{remark}

\begin{proof}
See Section \ref{Sec-proof-pairing}.
\end{proof}

Let $\mathfrak{C}$ be either the class $\mathbf{AB}$ or the subclass $%
\mathbf{FAB}\subseteq \mathbf{AB}$.

\begin{definition}
We say that the homology theory $h_{\ast }$ \textbf{satisfies UCF in the
class} $\mathfrak{C}$ iff for any $n\in \mathbb{Z},~X\in \mathbf{TOP}$ and $%
G\in \mathfrak{C}$:

\begin{itemize}
\item The pairing $h_{n}\left( X,\mathbb{Z}\right) \otimes _{\mathbb{Z}%
}G\rightarrow h_{n}\left( X,G\right) $ is a monomorphism.

\item The cokernel%
\begin{equation*}
\mathbf{coker}\left( h_{n}\left( X,\mathbb{Z}\right) \otimes _{\mathbb{Z}%
}G\rightarrow h_{n}\left( X,G\right) \right)
\end{equation*}%
is naturally (on $X$ and $G$) isomorphic to $\mathfrak{Tor}_{1}^{\mathbb{Z}%
}\left( h_{n-1}\left( X,\mathbb{Z}\right) ,G\right) $ where $\mathfrak{Tor}$
is either the usual torsion functor%
\begin{equation*}
Tor_{\mathbb{\ast }}^{\mathbb{Z}}:\mathbf{Mod}\left( \mathbb{Z}\right) 
\mathbf{\times Mod}\left( \mathbb{Z}\right) \rightarrow \mathbf{Mod}\left( 
\mathbb{Z}\right)
\end{equation*}%
or the functor%
\begin{equation*}
\mathbf{Tor}_{\mathbb{\ast }}^{\mathbb{Z}}:\mathbf{Pro}\left( \mathbb{Z}%
\right) \mathbf{\times Mod}\left( \mathbb{Z}\right) \rightarrow \mathbf{Pro}%
\left( \mathbb{Z}\right)
\end{equation*}%
from Definition \ref{Def-Tor-Pro(k)}.
\end{itemize}
\end{definition}

\begin{remark}
Roughly speaking, the theory $h_{\ast }$ satisfies UCF in the class $%
\mathfrak{C}$ iff there are natural (on $X\in \mathbf{SSh}$ and $G\in 
\mathfrak{C}$) exact sequences ($n\in \mathbb{Z}$)%
\begin{equation*}
0\longrightarrow h_{n}\left( X,\mathbb{Z}\right) \otimes _{\mathbb{Z}%
}G\longrightarrow h_{n}\left( X,G\right) \longrightarrow \mathfrak{Tor}_{1}^{%
\mathbb{Z}}\left( h_{n-1}\left( X,\mathbb{Z}\right) ,G\right)
\longrightarrow 0.
\end{equation*}
\end{remark}

\section{Main results}

\begin{theorem}
\label{Th-UCF-FAB}All the four theories $h_{n}$ \textbf{satisfy} UCF in the
class $\mathbf{FAB}$.
\end{theorem}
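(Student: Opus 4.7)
The plan is to reduce \emph{UCF} in $\mathbf{FAB}$ to a purely formal snake-lemma argument by exploiting the structure theorem for finitely generated abelian groups. Every $G\in\mathbf{FAB}$ admits a short free resolution
\[
0\longrightarrow F_{1}\overset{\partial}{\longrightarrow} F_{0}\longrightarrow G\longrightarrow 0
\]
with $F_{0}\cong\mathbb{Z}^{r_{0}}$ and $F_{1}\cong\mathbb{Z}^{r_{1}}$ finitely generated free. Both $(-)\otimes_{\mathbb{Z}}G$ and $\mathfrak{Tor}_{1}^{\mathbb{Z}}(-,G)$, in the ordinary as well as the pro-module sense, can be read off from this resolution: the pro-case follows from Proposition~\ref{Prop-quasi-projective-resolution}, using that a constant pro-object on a finitely generated free abelian group is quasi-projective.

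The argument rests on two ingredients to be verified for each of the four theories $h_{n}$. The first is \textbf{additivity in the coefficient variable}: the pairing of Proposition~\ref{Prop-pairing} gives a natural isomorphism
\[
h_{n}(X,\mathbb{Z})\otimes_{\mathbb{Z}} F \;\stackrel{\cong}{\longrightarrow}\; h_{n}(X,F)
\]
whenever $F\cong\mathbb{Z}^{r}$. By finite direct sums this reduces to the tautology $F=\mathbb{Z}$, and works because the four theories are defined via (pro-)chain complexes that depend functorially on the coefficients through $\otimes_{\mathbb{Z}}$, with finite direct sums commuting past every construction in sight (including the balancing functor). The second ingredient is a natural \textbf{long exact sequence in coefficients}:
\[
\cdots\longrightarrow h_{n}(X,F_{1})\longrightarrow h_{n}(X,F_{0})\longrightarrow h_{n}(X,G)\longrightarrow h_{n-1}(X,F_{1})\longrightarrow\cdots
\]
Since each $F_{i}$ is flat, tensoring the underlying integral (pro-)chain complex with $0\to F_{1}\to F_{0}\to G\to 0$ yields a short exact sequence of (pro-)chain complexes, from which the long exact sequence is extracted directly for pro-homology and via the homotopy-limit spectral sequence of Theorem~\ref{Th-Spectral-holimit-pro-complex} for the strong and balanced-strong theories.

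Given these ingredients, the classical \emph{UCF} argument applies. The coefficient long exact sequence condenses to a short exact sequence
\[
0\to \mathbf{coker}\bigl(h_{n}(X,F_{1})\to h_{n}(X,F_{0})\bigr)\to h_{n}(X,G)\to \ker\bigl(h_{n-1}(X,F_{1})\to h_{n-1}(X,F_{0})\bigr)\to 0.
\]
Right-exactness of $\otimes_{\mathbb{Z}}$ and the two-term formula for $\mathfrak{Tor}_{1}^{\mathbb{Z}}$ (valid because $F_{0}$ is flat) identify the outer terms, under additivity, with $h_{n}(X,\mathbb{Z})\otimes_{\mathbb{Z}}G$ and $\mathfrak{Tor}_{1}^{\mathbb{Z}}(h_{n-1}(X,\mathbb{Z}),G)$, respectively. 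A generator chase confirms that the resulting map $h_{n}(X,\mathbb{Z})\otimes_{\mathbb{Z}}G\to h_{n}(X,G)$ coincides with the pairing of Proposition~\ref{Prop-pairing}, delivering \emph{UCF}.

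The main obstacle is verifying the long exact sequence in coefficients uniformly for the strong and balanced-strong theories; everything else is formal. The spectral-sequence machinery of Section~\ref{Sec-spectral} is designed to treat this, but some care is needed to confirm that the balancing construction of Section~\ref{Sec-pro-modules} commutes with short exact sequences of finitely generated free coefficients --- which it does, because balancing acts on the pro-chain complex side of the tensor product, leaving the coefficient module untouched.
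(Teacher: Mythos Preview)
Your proposal is correct and follows essentially the same route as the paper: take a two-term free resolution of $G\in\mathbf{FAB}$ by finitely generated free groups, verify the pairing is an isomorphism for such coefficients, obtain a long exact sequence in coefficients, and read off the \emph{UCF} short exact sequence from the resulting coker/ker identification.

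Two small points where the paper is slightly more economical. First, rather than treating all four theories in parallel, the paper invokes Proposition~\ref{Prop-invariance}(\ref{Prop-invariance-compare-pro}),(\ref{Prop-invariance-compare-strong}) (equivalently Theorem~\ref{Th-map-to-weak-products}) to reduce the balanced theories to the non-balanced ones for $G\in\mathbf{FAB}$, so only two cases need checking. Second, for the strong-homology long exact sequence the paper does not go through the spectral sequence of Theorem~\ref{Th-Spectral-holimit-pro-complex}: it observes directly that the level-wise short exact sequence of singular complexes induces a short exact sequence of homotopy inverse limits (products of abelian groups are exact), and then takes the ordinary long exact sequence in homology. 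Your appeal to the spectral sequence here is not wrong, but it is an unnecessary detour. Also, your phrase ``since each $F_i$ is flat'' is slightly misdirected---what makes the tensored sequence of (pro-)complexes exact is that the singular chain groups are free (hence the pro-complex is quasi-projective, cf.\ Proposition~\ref{Prop-quasi-projectives-are-flat}), not flatness of the $F_i$.
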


\begin{proof}
See Section \ref{Sec-proof-UCF-FAB}.
\end{proof}

\begin{theorem}
\label{Th-balanced-pro}The balanced pro-homology $\mathbf{H}_{\ast }^{b}$ 
\textbf{satisfies} UCF in the class $\mathbf{AB}$.
\end{theorem}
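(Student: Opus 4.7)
The plan is to reduce the statement to the classical universal coefficients argument applied to a chain complex (in $\mathbf{Pro}(\mathbb{Z})$) whose terms are flat. The key point, which the adjective \emph{balanced} should encode by construction, is that balanced pro-homology $\mathbf{H}_{n}^{b}(X,G)$ can be computed as $H_{n}$ of a pro-chain-complex $\mathbf{C}_{\ast}^{b}(X)\otimes_{\mathbb{Z}}G$, where $\mathbf{C}_{\ast}^{b}(X)\in \mathbf{Pro}(\mathbf{Ch}(\mathbb{Z}))$ is a quasi-projective resolution of the pro-chain-complex giving pro-homology. So my first step is to unpack Definition~\ref{Def-balanced-pro-homology} and verify this representing property: namely, that each term $\mathbf{C}_{n}^{b}(X)$ is quasi-projective in $\mathbf{Pro}(\mathbb{Z})$, and that $\mathbf{H}_{n}^{b}(X,G)=H_{n}(\mathbf{C}_{\ast}^{b}(X)\otimes_{\mathbb{Z}}G)$ naturally in $X\in \mathbf{SSh}$ and $G\in \mathbf{AB}$.

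Once this is in place, Proposition~\ref{Prop-quasi-projectives-are-flat} (applicable since $\mathbb{Z}$ is noetherian, hence quasi-noetherian) gives that every $\mathbf{C}_{n}^{b}(X)$ is flat. Then, for any abelian group $G$, choose a short exact sequence $0\to K\to F\to G\to 0$ with $F$ free. Applying $\mathbf{C}_{\ast}^{b}(X)\otimes_{\mathbb{Z}}-$ produces a short exact sequence of pro-chain-complexes
\begin{equation*}
0\longrightarrow \mathbf{C}_{\ast}^{b}(X)\otimes_{\mathbb{Z}}K \longrightarrow \mathbf{C}_{\ast}^{b}(X)\otimes_{\mathbb{Z}}F \longrightarrow \mathbf{C}_{\ast}^{b}(X)\otimes_{\mathbb{Z}}G \longrightarrow 0,
\end{equation*}
exactness being preserved degreewise by flatness and, crucially, preserved in $\mathbf{Pro}(\mathbb{Z})$ (the tensor product of Theorem~\ref{Th-tensor-product} must be shown to be exact in the second variable when the first is flat; this is a standard level-wise argument since short exact sequences in $\mathbf{Pro}(\mathbb{Z})$ can be represented level-wise). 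The induced long exact homology sequence then reads
\begin{equation*}
\cdots \to \mathbf{H}_{n}^{b}(X,K)\to \mathbf{H}_{n}^{b}(X,F)\to \mathbf{H}_{n}^{b}(X,G)\to \mathbf{H}_{n-1}^{b}(X,K)\to \cdots
\end{equation*}

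Splicing this into the standard two-line argument, and using that for the free coefficient $F$ the natural map $\mathbf{H}_{n}^{b}(X,\mathbb{Z})\otimes_{\mathbb{Z}}F\to \mathbf{H}_{n}^{b}(X,F)$ is an isomorphism (direct sums of copies of $\mathbb{Z}$ commute with both homology and the pro-tensor product), one extracts the short exact sequence
\begin{equation*}
0\to \mathbf{H}_{n}^{b}(X,\mathbb{Z})\otimes_{\mathbb{Z}}G \to \mathbf{H}_{n}^{b}(X,G) \to \mathbf{Tor}_{1}^{\mathbb{Z}}(\mathbf{H}_{n-1}^{b}(X,\mathbb{Z}),G)\to 0,
\end{equation*}
where the identification of the right-hand term uses Definition~\ref{Def-Tor-Pro(k)} together with the fact that $0\to K\to F\to G\to 0$ with $F$ free can itself serve as (half of) a resolution computing $\mathbf{Tor}_{1}^{\mathbb{Z}}(-,G)$ out of the second variable, by flat acyclicity of the first. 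The identification of the left-hand map with the pairing of Proposition~\ref{Prop-pairing} follows from naturality and the free case.

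The main obstacle I anticipate is the second paragraph: proving that $-\otimes_{\mathbb{Z}}-$ on $\mathbf{Pro}(\mathbb{Z})\times\mathbf{Mod}(\mathbb{Z})$ transforms a short exact sequence in the second variable into a short exact sequence in $\mathbf{Pro}(\mathbb{Z})$ when the first argument is a degreewise flat pro-module. Exactness in $\mathbf{Pro}(\mathbb{Z})$ is more subtle than level-wise exactness, so one must argue either via a level representation of the pro-object, or via the characterization of short exact sequences through reindexing. Everything else (flatness of quasi-projectives, the long exact sequence, naturality in $X\in\mathbf{SSh}$ and $G\in\mathbf{AB}$) is then a formal consequence of the balanced resolution, which is exactly what the word \textquotedblleft balanced\textquotedblright\ is designed to provide.
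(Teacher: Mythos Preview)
Your approach is correct and is essentially the paper's own: the paper packages precisely this argument as Theorem~\ref{Th-UCF-pro-complexes}, and the proof of Theorem~\ref{Th-balanced-pro} is the one-liner ``apply Theorem~\ref{Th-UCF-pro-complexes} to the pro-complex $\mathbf{C}_{\ast}(\mathbf{X},\mathbb{Z})$''. Two minor clarifications: what you call $\mathbf{C}_{\ast}^{b}(X)$ is simply $\mathbf{C}_{\ast}(\mathbf{X},\mathbb{Z})$ itself (degreewise free, hence quasi-projective---it is not a resolution of anything), and the ``main obstacle'' you anticipate, namely exactness of $\mathbf{P}\otimes_{\mathbb{Z}}(-)$ in the second variable for quasi-projective $\mathbf{P}$, is exactly the content of Proposition~\ref{Prop-quasi-projectives-are-flat}, so no further work is needed there.
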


\begin{proof}
See Section \ref{Sec-proof-balanced-pro}.
\end{proof}

\begin{theorem}
\label{Th-pro}The pro-homology $\mathbf{H}_{\ast }$ \textbf{does not}
satisfy UCF in the class $\mathbf{AB}$.
\end{theorem}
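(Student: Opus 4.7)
My plan is to find a compact metric space $X$, an abelian group $G\in\mathbf{AB}$, and a degree $n$ for which UCF for $\mathbf{H}_\ast$ fails. If $(X_\alpha)$ is a polyhedral pro-resolution of $X$, then $\mathbf{H}_n(X,G)$ is represented level-wise by $(H_n(X_\alpha,G))$, and the classical UCF applied componentwise gives, naturally in $\alpha$, short exact sequences
\begin{equation*}
0\to H_n(X_\alpha,\mathbb{Z})\otimes_{\mathbb{Z}}G\to H_n(X_\alpha,G)\to\mathrm{Tor}_1^{\mathbb{Z}}(H_{n-1}(X_\alpha,\mathbb{Z}),G)\to 0.
\end{equation*}
Hence the pairing of Proposition \ref{Prop-pairing} is automatically a pro-monomorphism whose cokernel is the level-wise pro-$\mathrm{Tor}$ group. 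The obstruction to UCF in $\mathbf{AB}$ therefore lies entirely in whether this level-wise cokernel is naturally pro-isomorphic to $\mathbf{Tor}_1^{\mathbb{Z}}(\mathbf{H}_{n-1}(X,\mathbb{Z}),G)$ of Definition \ref{Def-Tor-Pro(k)}, which is derived from a quasi-projective resolution rather than from a level-wise one.

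The potential for failure is opened by Remark \ref{Rem-not-enough-projectives}: because $\mathbf{Pro}(\mathbb{Z})$ lacks enough projectives, $\mathbf{Tor}_\ast^{\mathbb{Z}}$ is computed from quasi-projective resolutions, and a pro-module that happens to be level-wise free need not be quasi-projective in the sense of Section \ref{Sec-pro-modules}. The canonical probe is the pro-module $P=(\mathbb{Z}\xleftarrow{p}\mathbb{Z}\xleftarrow{p}\cdots)$ for a prime $p$, realised as $\mathbf{H}_1(\Sigma_p,\mathbb{Z})$ for the $p$-adic solenoid $\Sigma_p$. Its $p$-adic pro-structure, together with the vanishing of $\lim(\mathbb{Z}\xleftarrow{p}\mathbb{Z}\xleftarrow{p}\cdots)$ and of $\mathrm{Hom}_{\mathbf{Pro}(\mathbb{Z})}(F,P)$ for every constant free pro-module $F$ (compatibility with the bonding $\cdot p$ would force infinite $p$-divisibility), makes $P$ a paradigmatic non-quasi-projective pro-module, so any quasi-projective presentation of $P$ is genuinely non-trivial and stands a real chance of carrying non-vanishing torsion against a non-flat coefficient group.

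Concretely I would take $X=\Sigma_p$, choose a degree $n$ (for example $n=2$) in which $\mathbf{H}_n(X,\mathbb{Z})$ vanishes but $\mathbf{H}_{n-1}(X,\mathbb{Z})\cong P$, and pick $G\in\mathbf{AB}$ (for instance $G=\mathbb{Z}/p$, or a $p$-divisible group such as $\mathbb{Z}[1/p]/\mathbb{Z}$) so that the level-wise cokernel of the pairing vanishes. Then, using Proposition \ref{Prop-quasi-projective-resolution} to obtain an explicit quasi-projective resolution of $P$, tensoring with $G$, and reading off $H_1$, I would verify that $\mathbf{Tor}_1^{\mathbb{Z}}(P,G)$ is not pro-zero for the chosen $G$; the resulting mismatch --- vanishing cokernel versus non-vanishing pro-$\mathrm{Tor}$ --- precludes any natural isomorphism between them and establishes the theorem. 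The principal obstacle is this last computation: producing a concrete quasi-projective cover of $P$ whose kernel, after tensoring with $G$, contributes a non-trivial pro-group. This forces a careful use of Section \ref{Sec-pro-modules} --- distinguishing level-wise freeness from genuine quasi-projectivity, applying flatness of quasi-projectives (Proposition \ref{Prop-quasi-projectives-are-flat}) to rule out spurious contributions, and leveraging the $p$-adic pro-structure of $P$ as the essential obstruction.
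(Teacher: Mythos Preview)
Your proposal has a fatal gap: the pro-module $P=(\mathbb{Z}\xleftarrow{p}\mathbb{Z}\xleftarrow{p}\cdots)$ \emph{is} quasi-projective. By Proposition~\ref{Prop-quasi-projective}, a pro-module is quasi-projective precisely when it can be represented by an inverse system of projective modules, and every term of $P$ is the free module $\mathbb{Z}$. Your observation that $\mathrm{Hom}_{\mathbf{Pro}(\mathbb{Z})}(\mathbb{Z},P)=0$ is correct but irrelevant: quasi-projectivity in Definition~\ref{Def-quasi-projective} concerns exactness of $\mathrm{Hom}_{\mathbf{Pro}(\mathbb{Z})}(P,?)$, not of $\mathrm{Hom}_{\mathbf{Pro}(\mathbb{Z})}(?,P)$; indeed $\mathrm{Hom}_{\mathbf{Pro}(\mathbb{Z})}(P,M)\cong M[1/p]$, which is an exact functor of $M$. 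Consequently $P$ is flat (Proposition~\ref{Prop-quasi-projectives-are-flat}) and $\mathbf{Tor}_1^{\mathbb{Z}}(P,G)=0$ for every $G$, so the ``non-vanishing pro-$\mathbf{Tor}$'' you are hoping for never materialises and no mismatch can arise at the solenoid. Note also that one of your candidate coefficient groups, $G=\mathbb{Z}/p$, lies in $\mathbf{FAB}$, where UCF is already established by Theorem~\ref{Th-UCF-FAB}.

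There is a second, conceptual slip in your opening paragraph. The level-wise classical UCF describes the map $\mathbf{H}_n(X,\mathbb{Z})\,\widetilde{\otimes}_{\mathbb{Z}}\,G\to\mathbf{H}_n(X,G)$ (the weak tensor of Theorem~\ref{Th-map-to-weak-products}), not the UCF pairing $\mathbf{H}_n(X,\mathbb{Z})\otimes_{\mathbb{Z}}G\to\mathbf{H}_n(X,G)$ of Proposition~\ref{Prop-pairing}; these agree only for finitely presented $G$, so for general $G\in\mathbf{AB}$ you cannot conclude that the pairing is a monomorphism with level-wise Tor as cokernel. The paper's proof exploits exactly this discrepancy: it takes $G$ to be a \emph{free} countably generated group (so that both the level-wise Tor and the pro-$\mathbf{Tor}$ vanish automatically), lets $X$ be the $k$-dimensional Hawaiian ear-ring, and shows that $\mathbf{H}_k(X,\mathbb{Z})\otimes_{\mathbb{Z}}G\cong\mathbf{P}^{\omega}(\mathbb{Z})$ while $\mathbf{H}_k(X,G)\cong\mathbf{P}(G)$ are not isomorphic pro-groups. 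The failure of UCF is thus a failure of the pairing itself to be an isomorphism, rooted in $\otimes\neq\widetilde{\otimes}$ for infinitely generated $G$ --- not a Tor mismatch.
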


\begin{proof}
See Section \ref{Sec-proof-pro}.
\end{proof}

\begin{theorem}
\label{Th-strong}The strong homology $\overline{H}_{n}$ \textbf{does not}
satisfy UCF in the class $\mathbf{AB}$.
\end{theorem}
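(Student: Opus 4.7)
The plan is to exhibit a compact metric space $X$ and an abelian group $G \in \mathbf{AB}$ for which the purported UCF sequence for strong homology fails.

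First, I would invoke the homotopy-limit spectral sequence for pro-complexes developed in Theorem \ref{Th-Spectral-holimit-pro-complex}, applied to an expansion $X = \lim X_\alpha$ by polyhedra. When the indexing system is cofinal with $\mathbb{N}$, the spectral sequence collapses to a Milnor-type short exact sequence
\begin{equation*}
0 \longrightarrow {\lim}^{1} H_{n+1}(X_\alpha, G) \longrightarrow \overline{H}_n(X, G) \longrightarrow \lim H_n(X_\alpha, G) \longrightarrow 0.
\end{equation*}
Since $\otimes_{\mathbb{Z}} G$ neither commutes with inverse limits nor preserves the Mittag--Leffler condition in general, the tower computing $\overline{H}_{\ast}(X, G)$ can carry ${\lim}^{1}$ information entirely absent from the tower computing $\overline{H}_{\ast}(X, \mathbb{Z})$; this is the source of the UCF failure.

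Second, I would specialize, following the strategy behind the counter-example of Theorem \ref{Th-pro}, to a Pontryagin-type compactum built as the inverse limit of polyhedra $X_k$ with $\{H_{n+1}(X_k, \mathbb{Z})\} \cong \{\mathbb{Z} \xleftarrow{p} \mathbb{Z} \xleftarrow{p} \cdots \}$ in a chosen degree and trivial (or small) homology elsewhere in the relevant range. With integer coefficients this tower is Mittag--Leffler, and its $\lim$ and ${\lim}^{1}$ are classically computable. Taking $G = \mathbb{Z}/p$, the tensored tower becomes $\{\mathbb{Z}/p \xleftarrow{0} \mathbb{Z}/p \xleftarrow{0} \cdots \}$, destroying Mittag--Leffler and producing a large ${\lim}^{1}$ of exponent $p$.

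Third, I would compare the two sides of the UCF sequence. The left-hand side $\overline{H}_n(X, \mathbb{Z}) \otimes_{\mathbb{Z}} \mathbb{Z}/p$ and the corrective term $\mathfrak{Tor}_{1}^{\mathbb{Z}}(\overline{H}_{n-1}(X, \mathbb{Z}), \mathbb{Z}/p)$ can both be obtained by applying $\otimes \mathbb{Z}/p$ and $\mathfrak{Tor}_{1}^{\mathbb{Z}}(-, \mathbb{Z}/p)$ to the Milnor-sequence terms for integer coefficients; via Proposition \ref{Prop-pairing} these map naturally into the Milnor sequence for $G$-coefficients. The non-trivial ${\lim}^{1}$ piece of $\overline{H}_n(X, \mathbb{Z}/p)$ then provides a cokernel that is not naturally isomorphic to $\mathfrak{Tor}_{1}^{\mathbb{Z}}(\overline{H}_{n-1}(X, \mathbb{Z}), \mathbb{Z}/p)$, so UCF must fail.

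The hard part is the explicit identification of the pairing of Proposition \ref{Prop-pairing} with its image inside the ${\lim}^{1}$ contribution of $\overline{H}_n(X, G)$, which is required to show genuine failure rather than a mere size mismatch. Unlike for Theorem \ref{Th-balanced-strong}, however, no set-theoretic hypothesis is needed: the incompatibility between $\otimes_{\mathbb{Z}} G$ and ${\lim}^{1}$ is already detectable on a countable tower of finitely generated abelian groups, which is precisely the failure of ordinary (non-balanced) strong homology to balance tensor products.
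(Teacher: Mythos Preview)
Your proposal has a fatal gap: you choose $G=\mathbb{Z}/p$, which lies in $\mathbf{FAB}$. But Theorem~\ref{Th-UCF-FAB} (which you are entitled to use) asserts precisely that strong homology \emph{does} satisfy UCF for every $G\in\mathbf{FAB}$. Hence no choice of space can produce a counter-example with $G=\mathbb{Z}/p$; the short exact sequence
\[
0\longrightarrow \overline{H}_n(X,\mathbb{Z})\otimes_{\mathbb{Z}}\mathbb{Z}/p\longrightarrow \overline{H}_n(X,\mathbb{Z}/p)\longrightarrow Tor_1^{\mathbb{Z}}\bigl(\overline{H}_{n-1}(X,\mathbb{Z}),\mathbb{Z}/p\bigr)\longrightarrow 0
\]
is guaranteed to hold for every $X$. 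The intuition that ``$\otimes_{\mathbb{Z}}G$ does not commute with $\varprojlim$ or preserve Mittag--Leffler'' is correct in general, but for finitely presented $G$ the tensor product $\otimes_{\mathbb{Z}}G$ \emph{does} commute with products and with $\varprojlim$ of towers (this is exactly why Theorem~\ref{Th-map-to-weak-products} gives an isomorphism in that case, and why the proof of Theorem~\ref{Th-UCF-FAB} goes through). Your Milnor-sequence heuristic therefore cannot detect a discrepancy when $G$ is finitely generated.

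The paper's argument takes $G$ to be a \emph{countably generated free} abelian group and $X_k$ the $k$-dimensional Hawaiian ear-ring. Since $G$ is free, the $Tor$ term vanishes, so UCF would force the pairing $\overline{H}_k(X_k,\mathbb{Z})\otimes_{\mathbb{Z}}G\to\overline{H}_k(X_k,G)$ to be an isomorphism. One computes directly (the pro-homology tower is a tower of surjections, so there is no $\varprojlim^1$ subtlety at all) that $\overline{H}_k(X_k,\mathbb{Z})\simeq\prod_{0}^{\infty}\mathbb{Z}$ and $\overline{H}_k(X_k,G)\simeq\prod_{0}^{\infty}G\simeq\prod_{0}^{\infty}\bigoplus_{0}^{\infty}\mathbb{Z}$, whereas $\overline{H}_k(X_k,\mathbb{Z})\otimes_{\mathbb{Z}}G\simeq\bigoplus_{0}^{\infty}\prod_{0}^{\infty}\mathbb{Z}$. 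The pairing is then the canonical map $\bigoplus\prod\to\prod\bigoplus$, which is visibly not surjective. Thus the failure is located at $\varprojlim$ (products) rather than at $\varprojlim^1$, and the coefficient group must be infinitely generated for the argument to have any chance.
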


\begin{proof}
See Section \ref{Sec-proof-strong}.
\end{proof}

\begin{theorem}
\label{Th-balanced-strong}The balanced strong homology $\overline{H}_{n}^{b}$
\textbf{does not} satisfy UCF in the class $\mathbf{AB}$.
\end{theorem}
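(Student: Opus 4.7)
The plan is to exhibit a compact space $X$ and an abelian group $G\in\mathbf{AB}\setminus\mathbf{FAB}$, together with an integer $n$, for which the candidate UCF sequence
\[
0 \to \overline{H}_n^b(X,\mathbb{Z}) \otimes_{\mathbb{Z}} G \to \overline{H}_n^b(X,G) \to \mathfrak{Tor}_1^{\mathbb{Z}}(\overline{H}_{n-1}^b(X,\mathbb{Z}),G) \to 0
\]
fails. I would represent $X$ by an inverse sequence $\mathbf{X}=(X_i)$ of finite polyhedra with sufficiently transparent pro-homology — bouquets of circles or of spheres, with carefully chosen bonding maps, are the natural candidates — and apply the spectral sequence of Theorem \ref{Th-Spectral-holimit-pro-complex} to express $\overline{H}_n^b(X,G)$ as an abutment built from derived inverse limits of towers obtained from $\mathbf{H}_*(X_i,\mathbb{Z})$ and $G$. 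The balancing construction is designed to kill the tensor-obstructions that spoil UCF for ordinary strong homology in the $\mathbf{FAB}$ case (Theorem \ref{Th-UCF-FAB}); the goal is to show that for suitable non-finitely generated $G$ a $\lim^1$-term survives and contributes a genuinely new summand to $\overline{H}_n^b(X,G)$ which cannot be accounted for by the two outer terms of the UCF sequence.

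For the first, conditional counter-example, I would take $G$ to be a large free abelian group, say of rank $\aleph_1$. Under the assumption $d=\aleph_1$, a Mardesic--Prasolov style non-vanishing argument for $\lim^1$ on the tower $\bigl(\mathbf{H}_*(X_i,\mathbb{Z})\otimes_{\mathbb{Z}}G\bigr)_i$ should yield a nontrivial class in $\overline{H}_n^b(X,G)$ outside the image of $\overline{H}_n^b(X,\mathbb{Z})\otimes_{\mathbb{Z}}G$. Naturality of the candidate UCF sequence in $G$ then rules out this class being captured by any $\mathfrak{Tor}_1^{\mathbb{Z}}(\overline{H}_{n-1}^b(X,\mathbb{Z}),-)$-term, and UCF fails.

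The principal obstacle is the second, ZFC-absolute counter-example, where the axiom $d=\aleph_1$ is unavailable. Here one must reverse-engineer $\mathbf{X}$ from a known ZFC construction of a tower of abelian groups with provably non-vanishing $\lim^1$, arranging the bonding maps of $\mathbf{X}$ at homology level to reproduce that tower after tensoring with a carefully chosen $G$. The delicate part is to ensure simultaneously that (i) the $\lim^1$-pathology survives the balancing construction, whose very purpose is to eliminate many such obstructions at the pro-level, and (ii) it propagates through the spectral sequence to the abutment without being absorbed by higher differentials or edge homomorphisms. Once a surviving nontrivial class is produced, the same naturality argument as in the first case disposes of any alternative identification of the cokernel with a $\mathrm{Tor}$-term, completing the proof.
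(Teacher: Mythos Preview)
Your overall architecture---a conditional example plus an absolute one, both detected via the spectral sequence for $\underleftarrow{\mathbf{holim}}$---matches the paper. But there is a real gap in the absolute half.

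You propose to build the ZFC example from an inverse \emph{sequence} (a tower) of finite polyhedra, looking for a $\underleftarrow{\lim}^{1}$ that is provably nonzero after tensoring with a well-chosen $G$. This cannot succeed. The balanced chain pro-complex with $G$ free of countable rank is $\mathbf{C}_{\ast}(\mathbf{X},\mathbb{Z})\otimes_{\mathbb{Z}}G\simeq\bigoplus_{\omega}\mathbf{C}_{\ast}(\mathbf{X},\mathbb{Z})$, and for the Hawaiian-earring tower the relevant obstruction is $\underleftarrow{\lim}^{1}\mathbf{P}^{\omega}(\mathbb{Z})$. But the (non)vanishing of this group is \emph{independent} of ZFC: it is nonzero under $d=\aleph_{1}$ and zero under PFA (Dow--Simon--Vaughan). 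More generally, tower-indexed systems do not produce a ZFC-provable $\underleftarrow{\lim}^{1}$ pathology of the kind you need; this is exactly why the conditional example is only conditional.

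The paper's way out is to leave towers entirely. The absolute example is Marde\v{s}i\'{c}'s paracompact space $X_{m}=X(m,0,\omega_{1})$, a wedge of $m$-balls indexed by the ordinal $\omega_{1}$, whose polyhedral resolution is $\omega_{1}$-indexed rather than $\mathbb{N}$-indexed. The spectral sequence then puts the obstruction in $\underleftarrow{\lim}^{2}$, not $\underleftarrow{\lim}^{1}$: one gets $\overline{H}_{m-2}^{b}(X_{m},\mathbb{Z})\simeq\underleftarrow{\lim}^{2}\mathbf{A}(\mathbb{Z})$ and $\overline{H}_{m-2}^{b}(X_{m},G)\simeq\underleftarrow{\lim}^{2}\mathbf{A}^{\omega}(\mathbb{Z})$, and the ZFC-provable input is that $\bigoplus_{\omega}\underleftarrow{\lim}^{2}\mathbf{A}(\mathbb{Z})\to\underleftarrow{\lim}^{2}\mathbf{A}^{\omega}(\mathbb{Z})$ is not an isomorphism. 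Two further details you should adjust: the paper takes $G$ free of \emph{countable} rank (so that $\otimes_{\mathbb{Z}}G$ is a countable direct sum and the cited derived-limit computations apply directly), and since $G$ is free the $\mathrm{Tor}$ term vanishes outright, so no separate naturality argument is needed to dispose of it.
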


\begin{proof}
See Section \ref{Sec-proof-balanced-strong}.
\end{proof}

\section{\label{Sec-pro-modules}Pro-modules}

\subsection{Pro-category}

We will often refer to Chapter 6 of \cite{Kashiwara-Categories-MR2182076}
where ind-objects are considered. Since the category of pro-objects $\mathbf{%
Pro}\left( k\right) $ is isomorphic to the category $\mathbf{Ind}\left( 
\mathbf{Mod}\left( k\right) ^{op}\right) ^{op}$ where $\mathbf{Ind}\left( 
\mathbf{C}\right) $ is the category of ind-objects, all properties of $%
\mathbf{Pro}\left( k\right) $ are dual to the properties of $\mathbf{Ind}%
\left( \mathbf{Mod}\left( k\right) ^{op}\right) $. We can therefore easily
use the statements that are dual to the corresponding statements in \cite%
{Kashiwara-Categories-MR2182076}.

Pro-objects are represented by \emph{inverse systems}. In \cite%
{Mardesic-Segal-MR676973} and \cite{Mardesic-MR1740831}, inverse systems are
indexed by \emph{filtrant} (called \emph{directed} in \cite%
{Mardesic-Segal-MR676973}, \S I.1.1) ordered \textbf{sets} $I$. We use here
more general inverse systems indexed by cofiltrant index categories, i.e. an
inverse system in $\mathbf{C}$ (Definition \ref{Def-Inv(C)} below) will be a
functor $\mathbf{X}:\mathbf{I}\rightarrow \mathbf{C}$ where $\mathbf{I}$ is
a small \emph{cofiltrant} category (see \cite{Kashiwara-Categories-MR2182076}%
, Definition 3.1.1, for the definition of (co)filtrant categories). Without
loss of generality, let us denote such a functor by $\left( X_{i}\right)
_{i\in \mathbf{I}}$. The so-called 
{Marde{\u{s}}i{\'{c}}\ }%
trick (\cite{Mardesic-Segal-MR676973}, Theorem I.1.4) shows that our
construction (indexing by a category) is equivalent to the 
{Marde{\u{s}}i{\'{c}}}%
-Segal construction (indexing by an ordered set). The difference in
terminology (\textbf{co}filtrant categories instead of filtrant sets) is due
to the fact that our construction uses covariant functors, while the 
{Marde{\u{s}}i{\'{c}}}%
-Segal construction uses contravariant ones.

\begin{definition}
\label{Def-Inv(C)}(compare with \cite{Prasolov-Extraordinatory-MR1821856},
Definition 2.1.3) For a category $\mathbf{C}$, let $\mathbf{Inv}\left( 
\mathbf{C}\right) $ be the following category (of \textbf{inverse systems}).
The objects are functors $\mathbf{X}:\mathbf{I}\rightarrow \mathbf{C}$ where 
$\mathbf{I}$ is a small cofiltrant category. A morphism $\mathbf{X}%
\rightarrow \mathbf{Y}$ where $\mathbf{X}:\mathbf{I}\rightarrow \mathbf{C}$
and $\mathbf{Y}:\mathbf{J}\rightarrow \mathbf{C}$, is a pair $\left( \varphi
,\psi \right) $ where $\varphi :\mathbf{J}\rightarrow \mathbf{I}$ is a
functor and $\psi $ is a morphism of functors $\psi :\mathbf{X}\circ \varphi
\rightarrow \mathbf{Y}$. The morphisms are composed as follows:%
\begin{equation*}
\left( \varphi _{2},\psi _{2}\right) \circ \left( \varphi _{1},\psi
_{1}\right) =\left( \varphi _{1}\circ \varphi _{2},\psi _{2}\circ \psi
_{1}\left( \varphi _{1}\right) \right)
\end{equation*}%
where%
\begin{eqnarray*}
\varphi _{1} &:&\mathbf{J}\longrightarrow \mathbf{I},\varphi _{2}:\mathbf{K}%
\longrightarrow \mathbf{J}, \\
\psi _{1} &:&\mathbf{X}\circ \varphi _{1}\longrightarrow \mathbf{Y,}\psi
_{2}:\mathbf{Y}\circ \varphi _{2}\longrightarrow \mathbf{Z,}
\end{eqnarray*}%
and $\psi _{1}\left( \varphi _{2}\right) :\mathbf{X}\circ \varphi _{1}\circ
\varphi _{2}\rightarrow \mathbf{Y}\circ \varphi _{2}$ is the morphism given
by%
\begin{equation*}
\psi _{1}\left( \varphi _{2}\right) _{k}=\left( \psi _{1}\right) _{\varphi
_{2}\left( k\right) }:X_{\varphi _{1}\left( \varphi _{2}\left( k\right)
\right) }\longrightarrow Y_{\varphi _{2}\left( k\right) }.
\end{equation*}%
The identity morphisms $1_{\mathbf{X}}$ are given by families $1_{\mathbf{X}%
}=\left( 1:\mathbf{I}\rightarrow \mathbf{I},\left( 1_{i}:X_{i}\rightarrow
X_{i}\right) _{i\in \mathbf{I}}\right) $.
\end{definition}

It is easily checked that $\mathbf{Inv}\left( \mathbf{C}\right) $ is indeed
a category.

\begin{definition}
\label{Def-level-morphism}If $\mathbf{I}=\mathbf{J}$ and $\varphi $ is the
identity functor, the morphism $\left( \varphi ,\psi \right) $ is called a 
\textbf{level} morphism.
\end{definition}

\begin{definition}
\label{Pro-category}Let $\mathbf{C}$ be a category. The pro-category $%
\mathbf{Pro}\left( \mathbf{C}\right) $ (see \cite%
{Kashiwara-Categories-MR2182076}, Definition 6.1.1, \cite%
{Mardesic-Segal-MR676973}, Remark I.1.4, or \cite{Artin-Mazur-MR883959},
Appendix) is the category with the same class of objects as $\mathbf{Inv}%
\left( \mathbf{C}\right) $, and the following sets of morphisms between two
objects $\mathbf{X}=\left( X_{i}\right) _{i\in \mathbf{I}}$ and $\mathbf{Y}%
=\left( Y_{j}\right) _{j\in \mathbf{J}}$:%
\begin{equation*}
Hom_{\mathbf{Pro}\left( \mathbf{C}\right) }\left( \mathbf{X},\mathbf{Y}%
\right) =\underleftarrow{\lim }_{j\in \mathbf{J}}~\underrightarrow{\lim }%
_{i\in \mathbf{I}}Hom_{\mathbf{C}}\left( X_{i},Y_{j}\right) .
\end{equation*}
\end{definition}

\begin{remark}
It follows from the definition that a morphism in $\mathbf{Pro}\left( 
\mathbf{C}\right) $ between $\mathbf{X}=\left( X_{i}\right) _{i\in \mathbf{I}%
}$ and $\mathbf{Y}=\left( Y_{j}\right) _{j\in \mathbf{J}}$ can be
represented by a pair $\left( \varphi ,\psi \right) $ where $\varphi
:Ob\left( \mathbf{J}\right) \rightarrow Ob\left( \mathbf{I}\right) $ is a
mapping between the sets of objects, and $\psi =\left( \psi _{j}\right)
:X_{\varphi \left( j\right) }\rightarrow Y_{j}$ is a family of morphisms
satisfying the standard inverse limit relations. A morphism $\left( \varphi
,\psi \right) $ in $\mathbf{Inv}\left( \mathbf{C}\right) $ can be therefore
interpreted as a morphism in $\mathbf{Pro}\left( \mathbf{C}\right) $, and
one has an evident functor $\mathbf{Inv}\left( \mathbf{C}\right) \rightarrow 
\mathbf{Pro}\left( \mathbf{C}\right) $.
\end{remark}

\begin{definition}
A morphism $\left( \varphi ,\psi \right) \in Hom_{\mathbf{Inv}\left( \mathbf{%
C}\right) }\left( \mathbf{X},\mathbf{Y}\right) $ is called \textbf{cofinal},
if $\varphi :\mathbf{J}\rightarrow \mathbf{I}$ is a cofinal functor (\textbf{%
left cofinal} in \cite{Bousfield-Kan-MR0365573}, XI.9.1, see also \cite%
{Artin-Mazur-MR883959}, Appendix (1.5)), $\mathbf{X}\circ \varphi =\mathbf{Y}
$, and%
\begin{equation*}
\psi =1_{\mathbf{Y}}:\mathbf{X}\circ \varphi \longrightarrow \mathbf{X}\circ
\varphi =\mathbf{Y}.
\end{equation*}
\end{definition}

\begin{theorem}
\label{Th-description-Pro(C)}The category $\mathbf{Pro}\left( \mathbf{C}%
\right) $ is equivalent to the category of fractions $\mathbf{Inv}\left( 
\mathbf{C}\right) \left[ \Sigma ^{-1}\right] $ where $\Sigma $ is the class
of cofinal morphisms.
\end{theorem}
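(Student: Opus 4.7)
The plan is to use the universal property of the category of fractions. I would show that the canonical functor $F : \mathbf{Inv}(\mathbf{C}) \to \mathbf{Pro}(\mathbf{C})$ noted in the remark preceding the theorem sends every cofinal morphism to an isomorphism, so that by the universal property of localization it factors through a functor $\bar{F} : \mathbf{Inv}(\mathbf{C})[\Sigma^{-1}] \to \mathbf{Pro}(\mathbf{C})$, and then prove that $\bar{F}$ is an equivalence.

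First I would verify that $F$ inverts $\Sigma$. If $(\varphi, 1_{\mathbf{Y}}) : \mathbf{X} \to \mathbf{Y}$ is cofinal, with $\varphi : \mathbf{J} \to \mathbf{I}$ cofinal and $\mathbf{Y} = \mathbf{X} \circ \varphi$, then for any third inverse system $\mathbf{Z} = (Z_{k})_{k \in \mathbf{K}}$ precomposition with $F(\varphi, 1_{\mathbf{Y}})$ is the map
$$\underleftarrow{\lim}_{k} \, \underrightarrow{\lim}_{j \in \mathbf{J}} \mathrm{Hom}_{\mathbf{C}}(X_{\varphi(j)}, Z_{k}) \longrightarrow \underleftarrow{\lim}_{k} \, \underrightarrow{\lim}_{i \in \mathbf{I}} \mathrm{Hom}_{\mathbf{C}}(X_{i}, Z_{k}),$$
and cofinality of $\varphi$ (equivalently, cofinality of $\varphi^{op}$ for filtered colimits) makes the inner filtered colimits agree, hence the displayed map is bijective. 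Thus $F(\varphi, 1_{\mathbf{Y}})$ is an isomorphism, $F$ factors through $\bar{F}$, and since $\bar{F}$ is the identity on the shared class of objects, essential surjectivity is automatic.

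The heart of the argument is full faithfulness of $\bar{F}$. Unwinding Definition \ref{Pro-category}, a morphism in $\mathrm{Hom}_{\mathbf{Pro}(\mathbf{C})}(\mathbf{X}, \mathbf{Y})$ is a compatible family of germs $[f_{j}] \in \underrightarrow{\lim}_{i} \mathrm{Hom}_{\mathbf{C}}(X_{i}, Y_{j})$, where compatibility is required only after enlargement in $\mathbf{I}$. For surjectivity on Hom-sets I would choose representatives $f_{j} : X_{i(j)} \to Y_{j}$ and apply a categorical version of the Marde\v{s}i\'{c} trick: construct a cofiltrant category $\mathbf{J}'$ whose objects package $j \in \mathbf{J}$ together with the auxiliary indices in $\mathbf{I}$ that witness strict compatibility, equipped with a cofinal functor $\mathbf{J}' \to \mathbf{J}$ and a functor $\mathbf{J}' \to \mathbf{I}$ refining the choices $i(\cdot)$. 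This produces a roof $\mathbf{X} \overset{\sigma}{\longleftarrow} \mathbf{X}' \overset{g}{\longrightarrow} \mathbf{Y}' \overset{\tau}{\longleftarrow} \mathbf{Y}$ in $\mathbf{Inv}(\mathbf{C})$ with $\sigma, \tau \in \Sigma$ whose image under $\bar{F}$ equals the given pro-morphism. For injectivity, two such roofs coincide in $\mathbf{Pro}(\mathbf{C})$ precisely when their families of germs agree after further enlargement of $i$; a second application of the Marde\v{s}i\'{c} trick organizes these coherences into a common cofinal refinement equalizing both roofs in $\mathbf{Inv}(\mathbf{C})[\Sigma^{-1}]$.

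The main obstacle will be executing the Marde\v{s}i\'{c} straightening in the generality of small cofiltrant indexing categories rather than in the directed-set setting of \cite{Mardesic-Segal-MR676973}. Concretely, one has to build $\mathbf{J}'$ as a genuinely cofiltrant category and verify that the repackaging functor is cofinal, using the cofinality criterion dual to \cite[Proposition 3.1.6]{Kashiwara-Categories-MR2182076}. Once this combinatorial step is in place, the rest of the argument amounts to a left calculus of fractions for $\Sigma$ and yields the claimed equivalence $\mathbf{Inv}(\mathbf{C})[\Sigma^{-1}] \simeq \mathbf{Pro}(\mathbf{C})$.
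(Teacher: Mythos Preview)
The paper does not actually prove this theorem: its entire proof is the one-line citation ``See \cite{Prasolov-Extraordinatory-MR1821856}, Theorem 2.1.10.'' Your proposal, by contrast, sketches the standard argument that one would expect to find behind such a citation: invert cofinal morphisms via the universal property, then establish full faithfulness of the induced functor by a Marde\v{s}i\'{c}-type straightening of a pro-morphism into a genuine $\mathbf{Inv}(\mathbf{C})$-roof. This is the right shape of proof and is almost certainly what the cited reference does.

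Two small remarks on your sketch. First, in the step showing that $F$ inverts $\Sigma$, you only checked that precomposition with $F(\varphi,1_{\mathbf Y})$ is a bijection on $\mathrm{Hom}_{\mathbf{Pro}(\mathbf C)}(-,\mathbf Z)$; to conclude that $F(\varphi,1_{\mathbf Y})$ is an isomorphism you should also check postcomposition, i.e.\ that $\mathrm{Hom}_{\mathbf{Pro}(\mathbf C)}(\mathbf Z,\mathbf X)\to\mathrm{Hom}_{\mathbf{Pro}(\mathbf C)}(\mathbf Z,\mathbf Y)$ is bijective, or alternatively appeal to Yoneda after noting that $\mathbf{Pro}(\mathbf C)$ embeds fully faithfully into $(\mathbf{Set}^{\mathbf C})^{op}$. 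Second, the ``main obstacle'' you flag---making the Marde\v{s}i\'{c} trick work for cofiltrant categories rather than directed sets---is genuine but is exactly the content of the cited Theorem 2.1.10, so in the context of this paper one is entitled simply to invoke it.
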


\begin{proof}
See \cite{Prasolov-Extraordinatory-MR1821856}, Theorem 2.1.10.
\end{proof}

\begin{example}
\label{Ex-pro-categories}~

\begin{enumerate}
\item \label{Ex-Pro(k)}Let $k$ be an associative commutative ring with unit.
The category of $k$-modules will be denoted $\mathbf{Mod}\left( k\right) $.
The pro-category $\mathbf{Pro}\left( \mathbf{Mod}\left( k\right) \right) $
of pro-modules will be shortly denoted $\mathbf{Pro}\left( k\right) $. Since
the category $\mathbf{AB}$ of abelian groups can be naturally identified
with $\mathbf{Mod}\left( \mathbb{Z}\right) $, the category $\mathbf{Pro}%
\left( \mathbf{AB}\right) $ of abelian pro-groups can be naturally
identified with $\mathbf{Pro}\left( \mathbb{Z}\right) $.

\item \label{Ex-Pro(TOP)}We will consider both the categories $\mathbf{Pro}%
\left( \mathbf{TOP}\right) $ and $\mathbf{Pro}\left( \mathbf{HTOP}\right) $
where $\mathbf{TOP}$ is the category of topological spaces and $\mathbf{HTOP}
$ is the category of homotopy types having topological spaces as objects and
homotopy classes of mappings as morphisms. There are full subcategories $%
\mathbf{Pro}\left( \mathbf{POL}\right) \subseteq \mathbf{Pro}\left( \mathbf{%
TOP}\right) $ and $\mathbf{Pro}\left( \mathbf{HPOL}\right) \subseteq \mathbf{%
Pro}\left( \mathbf{HTOP}\right) $ where $\mathbf{POL}$ is the category of
topological spaces having the homotopy type of a polyhedron.

\item \label{Ex-Pro(CHAIN)}Let $\mathbf{CHAIN}\left( k\right) $ be the
category of chain complexes of $k$-modules. We will consider the category $%
\mathbf{Pro}\left( \mathbf{CHAIN}\left( k\right) \right) $ of chain
pro-complexes.
\end{enumerate}
\end{example}

It is well-known that the category $\mathbf{Pro}\left( \mathbf{C}\right) $
admits coproducts if $\mathbf{C}$ does (see, e.g., \cite%
{Prasolov-Extraordinatory-MR1821856}, Proposition 2.4.1). Below is the
explicit description of a coproduct.

\begin{example}
\label{Ex-coproduct}Let $\left( \mathbf{X}^{\alpha }=\left( X_{i}^{\alpha
}\right) _{i\in \mathbf{I}^{\alpha }}\right) _{\alpha \in A}$ be a family of
pro-objects. Define a pro-object $\mathbf{Y}=\left( Y_{j}\right) _{j\in 
\mathbf{J}}$ as follows:%
\begin{equation*}
\mathbf{J}=\dprod\limits_{\alpha \in A}\mathbf{I}^{\alpha
},Y_{j}=\dcoprod\limits_{\alpha \in A}X_{j\left( \alpha \right) }^{\alpha },
\end{equation*}%
where $j=\left( j\left( \alpha \right) \in \mathbf{I}^{\alpha }\right)
_{\alpha \in A}$. One has for another pro-object $\mathbf{Z}=\left(
Z_{k}\right) _{k\in \mathbf{K}}$%
\begin{eqnarray*}
&&Hom_{\mathbf{Pro}\left( \mathbf{C}\right) }\left( \mathbf{Y},\mathbf{Z}%
\right) 
\simeq%
\underleftarrow{\lim }_{k}\underrightarrow{\lim }_{j}Hom_{\mathbf{C}}\left(
Y_{j},Z_{k}\right) 
\simeq%
\underleftarrow{\lim }_{k}\underrightarrow{\lim }_{j}Hom_{\mathbf{C}}\left(
\dcoprod\limits_{\alpha \in A}X_{j\left( \alpha \right) }^{\alpha
},Z_{k}\right) 
\simeq
\\
&&%
\simeq%
\underleftarrow{\lim }_{k}\underrightarrow{\lim }_{j}\dprod\limits_{\alpha
\in A}Hom_{\mathbf{C}}\left( X_{j\left( \alpha \right) }^{\alpha
},Z_{k}\right) 
\simeq%
\underleftarrow{\lim }_{k}\dprod\limits_{\alpha \in A}\underrightarrow{\lim }%
_{i\in \mathbf{I}^{\alpha }}Hom_{\mathbf{C}}\left( X_{i}^{\alpha
},Z_{k}\right) 
\simeq
\\
&&%
\simeq%
\underleftarrow{\lim }_{k}\dprod\limits_{\alpha \in A}Hom_{\mathbf{Pro}%
\left( \mathbf{C}\right) }\left( \mathbf{X}^{\alpha },Z_{k}\right) 
\simeq%
\dprod\limits_{\alpha \in A}\underleftarrow{\lim }_{k}Hom_{\mathbf{Pro}%
\left( \mathbf{C}\right) }\left( \mathbf{X}^{\alpha },Z_{k}\right) 
\simeq%
\dprod\limits_{\alpha \in A}Hom_{\mathbf{Pro}\left( \mathbf{C}\right)
}\left( \mathbf{X}^{\alpha },\mathbf{Z}\right) ,
\end{eqnarray*}%
and $\mathbf{Y}$ is indeed a coproduct of $\left( \mathbf{X}^{\alpha
}\right) _{\alpha \in A}$.
\end{example}

\begin{definition}
\label{Def-pro-complex-pro}Given a pro-complex 
\begin{equation*}
\mathbf{C}_{\ast }=\left( C_{\ast i}\right) _{i\in \mathbf{I}}\in \mathbf{Pro%
}\left( \mathbf{CHAIN}\left( k\right) \right)
\end{equation*}%
let $\mathbf{C}_{\ast }^{\#}\in \mathbf{CHAIN}\left( \mathbf{Pro}\left(
k\right) \right) $ be the following complex of pro-modules: $\mathbf{C}%
_{n}^{\#}=\left( C_{n,i}\right) _{i\in \mathbf{I}}$, $d_{n}=\left(
d_{n,i}\right) _{i\in \mathbf{I}}$. There are two ways of defining the
pro-homologies of the pro-complex:%
\begin{equation*}
\mathbf{H}_{n}\left( \mathbf{C}_{\ast }\right) =\left( \frac{\ker \left(
d_{n,i}:C_{n,i}\longrightarrow C_{n-1,i}\right) }{Im\left(
d_{n+1,i}:C_{n+1,i}\longrightarrow C_{n,i}\right) }\right) _{i\in \mathbf{I}}
\end{equation*}%
and%
\begin{equation*}
\mathbf{H}_{n}^{\#}\left( \mathbf{C}_{\ast }\right) =\frac{\ker \left( d_{n}:%
\mathbf{C}_{n}\longrightarrow \mathbf{C}_{n-1}\right) }{Im\left( d_{n+1}:%
\mathbf{C}_{n+1}\longrightarrow \mathbf{C}_{n}\right) }.
\end{equation*}
\end{definition}

\begin{proposition}
\label{Prop-pro-complex-pro}$\mathbf{H}_{n}\left( \mathbf{C}_{\ast }\right) 
\simeq%
\mathbf{H}_{n}^{\#}\left( \mathbf{C}_{\ast }\right) $.
\end{proposition}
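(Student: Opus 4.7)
The plan is to show that in the abelian category $\mathbf{Pro}(k)$, kernels, images, and cokernels of \emph{level} morphisms between pro-modules sharing a common cofiltrant index category are computed level-wise, and then to apply this to the differentials of $\mathbf{C}_{\ast }^{\#}$. Since by construction $d_{n}=(d_{n,i})_{i\in \mathbf{I}}$ is a level morphism indexed by the same cofiltrant $\mathbf{I}$ used throughout $\mathbf{C}_{\ast }^{\#}$, once the level-wise description of kernels and images is in place, the expression $\ker (d_{n})/Im(d_{n+1})$ in $\mathbf{Pro}(k)$ collapses directly to the level-wise homology pro-object.

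The key lemma, which I would either prove directly or invoke as the dual of the analogous result for ind-objects in \cite{Kashiwara-Categories-MR2182076}, Section~8.6, asserts that for any level morphism $f=(f_{i})_{i\in \mathbf{I}}\colon \mathbf{X}=(X_{i})\rightarrow \mathbf{Y}=(Y_{i})$ in $\mathbf{Pro}(k)$ there are canonical isomorphisms
\[
\ker f\simeq (\ker f_{i})_{i\in \mathbf{I}},\quad Im\,f\simeq (Im\,f_{i})_{i\in \mathbf{I}},\quad \mathrm{coker}\,f\simeq (\mathrm{coker}\,f_{i})_{i\in \mathbf{I}},
\]
with transition morphisms induced from those of $\mathbf{X}$ and $\mathbf{Y}$. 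Granted this, applying it to $d_{n}$ and $d_{n+1}$ yields $\ker d_{n}\simeq (\ker d_{n,i})_{i}$ and $Im\,d_{n+1}\simeq (Im\,d_{n+1,i})_{i}$. The canonical inclusion of the latter into the former is again a level morphism, so one further application of the lemma produces
\[
\mathbf{H}_{n}^{\#}(\mathbf{C}_{\ast })=\frac{\ker d_{n}}{Im\,d_{n+1}}\simeq \left( \frac{\ker d_{n,i}}{Im\,d_{n+1,i}}\right) _{i\in \mathbf{I}}=\mathbf{H}_{n}(\mathbf{C}_{\ast }),
\]
which is the asserted isomorphism, with naturality built in because all the identifications are induced from the structure maps of $\mathbf{C}_{\ast }$.

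The main obstacle is precisely the level-wise lemma. Morphisms in $\mathbf{Pro}(k)$ are elements of the double limit from Definition~\ref{Pro-category} and are therefore much more abundant than level morphisms, so the universal properties of kernel, image, and cokernel must be tested against arbitrary pro-morphisms $\mathbf{Z}\rightarrow \mathbf{X}$ rather than only level ones. My approach would be to replace such a general pro-morphism by a representative whose components factor through the candidate level-wise kernel, using the $\varprojlim \varinjlim $ description together with the cofiltrancy of $\mathbf{I}$ and the universal property of $\ker f_{i}$ in $\mathbf{Mod}(k)$. Once this is verified for $\ker $, the corresponding statements for $Im$ and $\mathrm{coker}$ follow either by the obvious dual argument or by factoring $f$ level-wise as a surjection followed by an inclusion; the remainder of the proof is then purely formal.
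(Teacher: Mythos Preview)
Your proposal is correct and follows essentially the same route as the paper: the paper's proof explicitly verifies the level-wise lemma for kernels and cokernels by computing $Hom_{\mathbf{Pro}(k)}(\mathbf{D},(\ker f_i)_i)$ and $Hom_{\mathbf{Pro}(k)}((\mathrm{coker}\,f_i)_i,\mathbf{D})$ via the $\varprojlim\varinjlim$ formula, using that filtrant $\varinjlim$ and arbitrary $\varprojlim$ both preserve kernels, then deduces images and coimages and assembles the homology identification exactly as you describe. The only difference is that the paper actually writes out the Hom computation rather than citing the dual of Kashiwara--Schapira, so your sketch of the ``main obstacle'' is precisely what the paper carries out in detail.
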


\begin{proof}
Straightforward. Compare with \cite{Mardesic-Segal-MR676973}, \S II.2. We
need to describe kernels, cokernels, images and coimages.

\begin{enumerate}
\item \textbf{Kernels}. Let%
\begin{equation*}
\left( f:\mathbf{A}\longrightarrow \mathbf{B}\right) =\left(
f_{i}:A_{i}\longrightarrow B_{i}\right) _{i\in \mathbf{I}}
\end{equation*}%
be a level morphism, and let%
\begin{equation*}
\mathbf{C=}\left( \ker \left( f_{i}\right) \right) _{i\in \mathbf{I}}.
\end{equation*}%
We have to prove that $\mathbf{C}$ is the kernel of $f$. Let $\mathbf{D}%
=\left( D_{j}\right) _{j\in \mathbf{J}}\in \mathbf{Pro}\left( k\right) $.
Then%
\begin{equation*}
Hom_{\mathbf{Pro}\left( k\right) }\left( \mathbf{D},\mathbf{C}\right) =%
\underleftarrow{\lim }_{i\in \mathbf{I}}\underrightarrow{\lim }_{j\in 
\mathbf{J}}Hom\left( D_{j},C_{i}\right) =
\end{equation*}%
\begin{eqnarray*}
\underleftarrow{\lim }_{i\in \mathbf{I}}\underrightarrow{\lim }_{j\in 
\mathbf{J}}\ker \left( Hom\left( D_{j},A_{i}\right) \longrightarrow
Hom\left( D_{j},B_{i}\right) \right) &=& \\
\underleftarrow{\lim }_{i\in \mathbf{I}}\ker \left( \underrightarrow{\lim }%
_{j\in \mathbf{J}}Hom\left( D_{j},A_{i}\right) \longrightarrow 
\underrightarrow{\lim }_{j\in \mathbf{J}}Hom\left( D_{j},B_{i}\right)
\right) &=& \\
\ker \left( \underleftarrow{\lim }_{i\in \mathbf{I}}\underrightarrow{\lim }%
_{j\in \mathbf{J}}Hom\left( D_{j},A_{i}\right) \longrightarrow 
\underleftarrow{\lim }_{i\in \mathbf{I}}\underrightarrow{\lim }_{j\in 
\mathbf{J}}Hom\left( D_{j},B_{i}\right) \right) &=&
\end{eqnarray*}%
\begin{equation*}
\ker \left( Hom_{\mathbf{Pro}\left( k\right) }\left( \mathbf{D},\mathbf{A}%
\right) \longrightarrow Hom_{\mathbf{Pro}\left( k\right) }\left( \mathbf{D},%
\mathbf{B}\right) \right) .
\end{equation*}

\item \textbf{Cokernels}. Let now%
\begin{equation*}
\mathbf{E=}\left( \mathbf{coker}\left( f_{i}\right) \right) _{i\in \mathbf{I}%
}.
\end{equation*}%
We have to prove that $\mathbf{E}$ is the cokernel of $f$. Let $\mathbf{D}%
=\left( D_{j}\right) _{j\in \mathbf{J}}\in \mathbf{Pro}\left( k\right) $.
Then%
\begin{equation*}
Hom_{\mathbf{Pro}\left( k\right) }\left( \mathbf{E},\mathbf{D}\right) =%
\underleftarrow{\lim }_{j\in \mathbf{J}}\underrightarrow{\lim }_{i\in 
\mathbf{I}}Hom\left( E_{i},D_{j}\right) =
\end{equation*}%
\begin{eqnarray*}
\underleftarrow{\lim }_{j\in \mathbf{J}}\underrightarrow{\lim }_{i\in 
\mathbf{I}}\ker \left( Hom\left( B_{i},D_{j}\right) \longrightarrow
Hom\left( A_{i},D_{j}\right) \right) &=& \\
\underleftarrow{\lim }_{j\in \mathbf{J}}\ker \left( \underrightarrow{\lim }%
_{i\in \mathbf{I}}Hom\left( B_{i},D_{j}\right) \longrightarrow 
\underrightarrow{\lim }_{i\in \mathbf{I}}Hom\left( A_{i},D_{j}\right)
\right) &=& \\
\ker \left( \underleftarrow{\lim }_{j\in \mathbf{J}}\underrightarrow{\lim }%
_{i\in \mathbf{I}}Hom\left( B_{i},D_{j}\right) \longrightarrow 
\underleftarrow{\lim }_{j\in \mathbf{J}}\underrightarrow{\lim }_{i\in 
\mathbf{I}}Hom\left( A_{i},D_{j}\right) \right) &=&
\end{eqnarray*}%
\begin{equation*}
\ker \left( Hom_{\mathbf{Pro}\left( k\right) }\left( \mathbf{B},\mathbf{D}%
\right) \longrightarrow Hom_{\mathbf{Pro}\left( k\right) }\left( \mathbf{A},%
\mathbf{D}\right) \right) .
\end{equation*}

\item \textbf{Images}. 
\begin{equation*}
Im\left( f\right) =\ker \mathbf{coker}\left( f\right) =\ker \left(
B_{i}\longrightarrow \mathbf{coker}\left( f_{i}\right) \right) _{i\in 
\mathbf{I}}=\left( Im\left( f_{i}\right) \right) _{i\in \mathbf{I}}.
\end{equation*}

\item \textbf{Coimages}. 
\begin{equation*}
Coim\left( f\right) =\mathbf{coker}\ker \left( f\right) =\mathbf{coker}%
\left( \ker \left( f_{i}\right) \longrightarrow A_{i}\right) _{i\in \mathbf{I%
}}=\left( A_{i}/\ker \left( f_{i}\right) \right) _{i\in \mathbf{I}}%
\simeq%
\left( Im\left( f_{i}\right) \right) _{i\in \mathbf{I}}.
\end{equation*}
\end{enumerate}

Actually, $Im\left( f\right) 
\simeq%
Coim\left( f\right) $ because $\mathbf{Pro}\left( k\right) $ is an abelian
category (see \cite{Kashiwara-Categories-MR2182076}, dual to Theorem 8.6.5).
Finally, 
\begin{equation*}
\mathbf{H}_{n}^{\#}\left( \mathbf{C}\right) =\mathbf{coker}\left( \mathbf{C}%
_{n+1}\longrightarrow \ker \left( \mathbf{C}_{n}\longrightarrow \mathbf{C}%
_{n-1}\right) \right) 
\simeq%
\left( \frac{\ker d_{n.i}}{Im\left( d_{n+1.i}\right) }\right) _{i\in \mathbf{%
I}}=\mathbf{H}_{n}\left( \mathbf{C}\right) .
\end{equation*}
\end{proof}

\begin{proposition}
\label{Prop-coproduct-complexes}Let $\left( \mathbf{C}_{\alpha }\in \mathbf{%
Pro}\left( \mathbf{CHAIN}\left( k\right) \right) \right) _{\alpha \in A}$ be
a family of chain pro-complexes of $k$-modules. Then, for any $n\in \mathbb{Z%
}$,%
\begin{equation*}
\mathbf{H}_{n}\left( \dcoprod\limits_{\alpha \in A}\mathbf{C}_{\alpha
}\right) 
\simeq%
\dcoprod\limits_{\alpha \in A}\mathbf{H}_{n}\left( \mathbf{C}_{\alpha
}\right) .
\end{equation*}
\end{proposition}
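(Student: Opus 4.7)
The plan is to reduce the claim to two classical facts: (i) coproducts of chain complexes in $\mathbf{CHAIN}(k)$ are formed degreewise, and (ii) in $\mathbf{Mod}(k)$ the homology of a coproduct of chain complexes is the coproduct of homologies (since direct sums of modules are exact). Everything else is bookkeeping with the explicit description of pro-coproducts from Example \ref{Ex-coproduct}.

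First I would write each pro-complex as $\mathbf{C}_{\alpha}=(C^{\alpha}_{\ast,i})_{i\in \mathbf{I}^{\alpha}}$. Applying Example \ref{Ex-coproduct} to the category $\mathbf{C}=\mathbf{CHAIN}(k)$ (which has arbitrary coproducts), the coproduct of the family $(\mathbf{C}_{\alpha})_{\alpha\in A}$ in $\mathbf{Pro}(\mathbf{CHAIN}(k))$ is the chain pro-complex
\begin{equation*}
\mathbf{D}=(D_{\ast,j})_{j\in \mathbf{J}},\qquad \mathbf{J}=\dprod_{\alpha\in A}\mathbf{I}^{\alpha},\qquad D_{\ast,j}=\dcoprod_{\alpha\in A}C^{\alpha}_{\ast,j(\alpha)}.
\end{equation*}

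Next I would compute pro-homology. By Proposition \ref{Prop-pro-complex-pro} we have $\mathbf{H}_{n}(\mathbf{D})=(H_{n}(D_{\ast,j}))_{j\in \mathbf{J}}$. Since coproducts in $\mathbf{Mod}(k)$ are exact, the classical natural isomorphism $H_{n}\bigl(\coprod_{\alpha}C^{\alpha}_{\ast,j(\alpha)}\bigr)\simeq \coprod_{\alpha}H_{n}(C^{\alpha}_{\ast,j(\alpha)})$ holds levelwise, and these isomorphisms are compatible with the bonding morphisms indexed by $\mathbf{J}$.

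Finally, I would compare this with $\coprod_{\alpha}\mathbf{H}_{n}(\mathbf{C}_{\alpha})$, computed now in $\mathbf{Pro}(k)$. Another application of Example \ref{Ex-coproduct}, this time to $\mathbf{C}=\mathbf{Mod}(k)$ and to the family $\mathbf{H}_{n}(\mathbf{C}_{\alpha})=(H_{n}(C^{\alpha}_{\ast,i}))_{i\in \mathbf{I}^{\alpha}}$, yields exactly the pro-module $\bigl(\coprod_{\alpha}H_{n}(C^{\alpha}_{\ast,j(\alpha)})\bigr)_{j\in \mathbf{J}}$ indexed by the same category $\mathbf{J}=\prod_{\alpha}\mathbf{I}^{\alpha}$. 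Comparing the two expressions gives the required isomorphism.

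The only point to watch is that coproducts in $\mathbf{Pro}(\mathbf{C})$ are \emph{not} formed by taking a levelwise coproduct over a common index — the index category changes to the product $\prod_{\alpha}\mathbf{I}^{\alpha}$. Once one has digested that via Example \ref{Ex-coproduct}, the argument is essentially formal, and I do not anticipate a genuine obstacle; the main thing to verify carefully is that the levelwise classical isomorphism is natural in $j\in \mathbf{J}$ so that it assembles into a morphism in $\mathbf{Pro}(k)$.
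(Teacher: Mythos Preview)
Your proposal is correct and follows exactly the route the paper intends: the paper's proof is simply ``Straightforward, due to the explicit description in Example~\ref{Ex-coproduct},'' and you have spelled out precisely those straightforward details. Your caution about the index category changing to $\prod_\alpha \mathbf{I}^\alpha$ and the need for levelwise naturality is well placed, but as you observe there is no real difficulty.
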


\begin{proof}
Straightforward, due to the explicit description in Example \ref%
{Ex-coproduct}.
\end{proof}

\begin{remark}
Since both pro-complexes and pro-modules form additive (even abelian)
categories, one can use the symbol $\oplus $ (direct sum) instead of $\sqcup 
$:%
\begin{equation*}
\mathbf{H}_{n}\left( \dbigoplus\limits_{\alpha \in A}\mathbf{C}_{\alpha
}\right) 
\simeq%
\dbigoplus\limits_{\alpha \in A}\mathbf{H}_{n}\left( \mathbf{C}_{\alpha
}\right) .
\end{equation*}
\end{remark}

\subsection{Strong shape}

Theorem \ref{Th-description-SSh} below gives a definition of the strong
shape category which is equivalent to the Lisitsa-%
{Marde{\u{s}}i{\'{c}}}
one (\cite{Mardesic-MR1740831}, \S 8.2).

\begin{definition}
A level morphism $f:\mathbf{X}\rightarrow \mathbf{Y}$ in $\mathbf{Pro}\left( 
\mathbf{TOP}\right) $ is called a \textbf{level equivalence} iff $%
f_{i}:X_{i}\rightarrow Y_{i}$ are homotopy equivalences for all $i\in 
\mathbf{I}$.
\end{definition}

\begin{definition}
(\cite{Prasolov-Extraordinatory-MR1821856}, Definition 2.1.9). A morphism 
\begin{equation*}
\left( \varphi ,\psi \right) :\mathbf{X}\longrightarrow \mathbf{Y}
\end{equation*}%
in $\mathbf{Inv}\left( \mathbf{TOP}\right) $ is called \textbf{special} iff $%
\varphi $ is a cofinal functor and $\psi _{j}:X_{\varphi \left( j\right)
}\rightarrow Y_{j}$ is a homotopy equivalence for all $j\in \mathbf{J}$.
\end{definition}

\begin{proposition}
\label{Prop-special-morphisms}Any special morphism $f:\mathbf{X}\rightarrow 
\mathbf{Y}$ is a composition $f=h\circ g$ of a cofinal morphism $g$ and a
level equivalence $h$.
\end{proposition}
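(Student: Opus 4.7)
The plan is to perform the factorization in the most transparent way by introducing the pulled-back inverse system as an intermediate object. Given $f=(\varphi,\psi)\colon\mathbf{X}\to\mathbf{Y}$ special, with $\mathbf{X}\colon\mathbf{I}\to\mathbf{TOP}$, $\mathbf{Y}\colon\mathbf{J}\to\mathbf{TOP}$, $\varphi\colon\mathbf{J}\to\mathbf{I}$ cofinal, and each $\psi_j\colon X_{\varphi(j)}\to Y_j$ a homotopy equivalence, I would define an auxiliary inverse system $\mathbf{X}'\colon\mathbf{J}\to\mathbf{TOP}$ by $\mathbf{X}':=\mathbf{X}\circ\varphi$. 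This object is indexed by $\mathbf{J}$ (so it is on the same side as $\mathbf{Y}$), yet its terms agree with those of $\mathbf{X}$ along $\varphi$, so it naturally sits between $\mathbf{X}$ and $\mathbf{Y}$.

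Next I would write down the two pieces explicitly. The candidate for the cofinal morphism is
\begin{equation*}
g=(\varphi,1_{\mathbf{X}'})\colon\mathbf{X}\longrightarrow\mathbf{X}',
\end{equation*}
which satisfies the hypotheses of a cofinal morphism: $\varphi$ is cofinal by assumption, $\mathbf{X}\circ\varphi=\mathbf{X}'$ by construction, and the second component is the identity. The candidate for the level equivalence is
\begin{equation*}
h=(1_{\mathbf{J}},\psi)\colon\mathbf{X}'\longrightarrow\mathbf{Y},
\end{equation*}
which is a level morphism since its indexing functor is the identity on $\mathbf{J}$, and each component $\psi_j\colon X'_j=X_{\varphi(j)}\to Y_j$ is a homotopy equivalence by the assumption that $f$ is special; hence $h$ is a level equivalence.

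The final step is to verify $h\circ g=f$ using the composition rule from Definition \ref{Def-Inv(C)}. With $\varphi_1=\varphi$, $\varphi_2=1_{\mathbf{J}}$, $\psi_1=1_{\mathbf{X}'}$, $\psi_2=\psi$, the formula gives
\begin{equation*}
h\circ g=\bigl(\varphi_1\circ\varphi_2,\,\psi_2\circ\psi_1(\varphi_2)\bigr)=(\varphi\circ 1_{\mathbf{J}},\,\psi\circ 1)=(\varphi,\psi)=f,
\end{equation*}
using that $\psi_1(\varphi_2)_k=(1_{\mathbf{X}'})_{\varphi_2(k)}$ is the identity on each component.

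There is essentially no obstacle here: once the intermediate object $\mathbf{X}\circ\varphi$ is identified, the factorization is forced and everything reduces to a bookkeeping check against the composition law of Definition \ref{Def-Inv(C)}. The only mild care needed is to keep track of the contravariance convention — the index functor $\varphi$ goes from $\mathbf{J}$ to $\mathbf{I}$ although the morphism itself goes from $\mathbf{X}$ to $\mathbf{Y}$ — so that $g$ is correctly recognized as cofinal (not $h$) and the composition is computed in the correct order.
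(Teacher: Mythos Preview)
Your proof is correct and follows essentially the same approach as the paper: introduce the intermediate system $\mathbf{X}\circ\varphi$ (which the paper calls $\mathbf{Z}$), define $g=(\varphi,1)$ and $h=(1_{\mathbf{J}},\psi)$, and observe that $f=h\circ g$. Your write-up is actually more thorough, explicitly verifying the composition against the rule in Definition~\ref{Def-Inv(C)}, whereas the paper simply asserts $f=h\circ g$.
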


\begin{proof}
Given a special morphism $\left( \varphi ,\psi \right) :\mathbf{X}%
\rightarrow \mathbf{Y}$, let 
\begin{equation*}
\mathbf{Z}=\left( Z_{j}=X_{\varphi \left( j\right) }\right) _{j\in \mathbf{J}%
}\in \mathbf{Inv}\left( \mathbf{TOP}\right)
\end{equation*}%
and%
\begin{eqnarray*}
g &=&\left( \varphi ,1_{\mathbf{X}\circ \varphi }\right) :\mathbf{X}%
\longrightarrow \mathbf{Z}, \\
h &=&\left( 1_{J},\psi \right) :\mathbf{Z}\longrightarrow \mathbf{Y}.
\end{eqnarray*}%
Clearly $g$ is cofinal, and $h$ is a level equivalence, while $f=h\circ g$.
\end{proof}

\begin{theorem}
\label{Th-description-SSh}The strong shape category $\mathbf{SSh}$ is a full
subcategory of the category of fractions $\mathbf{Pro}\left( \mathbf{POL}%
\right) \left[ \Sigma ^{-1}\right] $ where $\Sigma $ is the class of special
morphisms.
\end{theorem}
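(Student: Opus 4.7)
The plan is to identify strong shape morphisms (as defined by Lisitsa--Marde\v{s}i\'{c}) with morphisms in the localized category $\mathbf{Pro}(\mathbf{POL})[\Sigma^{-1}]$, realized on those pro-polyhedra that arise as polyhedral resolutions of topological spaces. First, I would recall that every $X\in\mathbf{TOP}$ admits a strong (polyhedral) expansion $p:X\to\mathbf{X}$ with $\mathbf{X}\in\mathbf{Inv}(\mathbf{POL})$, and that two such expansions are related by a special morphism in the sense of the excerpt. This fixes the class of objects of $\mathbf{SSh}$ as a subclass of $\mathbf{Ob}\,\mathbf{Pro}(\mathbf{POL})$, and it remains to match Hom-sets.

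The next step is to reduce the problem to inverting only level equivalences. By Proposition \ref{Prop-special-morphisms}, every special morphism factors as a cofinal morphism followed by a level equivalence. Cofinal morphisms are already isomorphisms in $\mathbf{Pro}(\mathbf{POL})$ by Theorem \ref{Th-description-Pro(C)}, so $\mathbf{Inv}(\mathbf{POL})[\Sigma^{-1}]$ can be replaced by $\mathbf{Pro}(\mathbf{POL})[\Lambda^{-1}]$, where $\Lambda$ is the (smaller) class of level homotopy equivalences. The desired identification then reads: for polyhedral resolutions $\mathbf{X},\mathbf{Y}$,
\begin{equation*}
\mathrm{Hom}_{\mathbf{SSh}}(X,Y)\ \cong\ \mathrm{Hom}_{\mathbf{Pro}(\mathbf{POL})[\Lambda^{-1}]}(\mathbf{X},\mathbf{Y}).
\end{equation*}

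To establish the bijection I would invoke a calculus-of-left-fractions argument for $\Lambda$ in $\mathbf{Pro}(\mathbf{POL})$. Using the Marde\v{s}i\'{c} trick to realize any pair of morphisms by level representatives, and the fact that a level morphism $\mathbf{X}\to\mathbf{Y}$ between polyhedral pro-objects whose components are homotopy equivalences is a $\Lambda$-morphism, one represents a morphism in the localization by a left roof
\begin{equation*}
\mathbf{X}\ \xrightarrow{f}\ \mathbf{Z}\ \xleftarrow{\,s\,}\ \mathbf{Y}, \qquad s\in\Lambda.
\end{equation*}
Such a roof is precisely the data (up to the correct equivalence) of a coherent map in the Lisitsa--Marde\v{s}i\'{c} sense: $s$ is, up to cofinal reindexing, a level homotopy equivalence, and inverting it codifies the homotopy data of a coherent map out of $\mathbf{X}$. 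Conversely, a coherent map $\mathbf{X}\to\mathbf{Y}$ can be rectified (via a telescope/mapping-cylinder construction at the pro-level) to a level map $\mathbf{X}\to\mathbf{Z}$ together with a level equivalence $\mathbf{Y}\to\mathbf{Z}$, producing the required roof. Checking that two roofs represent the same coherent strong shape morphism iff they are equivalent in the calculus of fractions closes the loop.

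The main obstacle I anticipate is precisely this last point: matching the equivalence relation on coherent maps used to define $\mathbf{SSh}$ with the equivalence relation on roofs in the category of fractions. Concretely, the hard work is verifying that $\Lambda$ admits a left calculus of fractions inside $\mathbf{Pro}(\mathbf{POL})$ (the nontrivial Ore-type condition, plus the cancellation axiom), and that the rectification from coherent maps to roofs is well-defined on equivalence classes. Once this is done, fullness and faithfulness of the embedding $\mathbf{SSh}\hookrightarrow\mathbf{Pro}(\mathbf{POL})[\Sigma^{-1}]$ follow simultaneously, yielding the theorem.
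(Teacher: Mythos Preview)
Your proposal takes a genuinely different route from the paper. The paper's proof is essentially a citation: it invokes Theorem~1 of \cite{Prasolov-Extraordinatory-MR1821856}, which already identifies $\mathbf{SSh}$ as a full subcategory of $\mathbf{Pro}(\mathbf{ANR})[\Sigma^{-1}]$, and then observes that $\mathbf{ANR}$ may be replaced by $\mathbf{POL}$ because every ANR is homotopy equivalent to a polyhedron. No direct comparison between coherent maps and roofs is carried out here; that work was done in the earlier paper.

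Your approach, by contrast, sketches a self-contained argument: factor special morphisms via Proposition~\ref{Prop-special-morphisms}, reduce to inverting level equivalences, and then attempt to match the Lisitsa--Marde\v{s}i\'{c} coherent-map description of $\mathbf{SSh}$ with a calculus of left fractions for level equivalences in $\mathbf{Pro}(\mathbf{POL})$. This is in spirit what the cited Theorem~1 actually does, so you are effectively re-deriving the external input rather than using it. The advantage of your route is self-containment; the cost is exactly the obstacle you flag---verifying the Ore conditions for $\Lambda$ and showing that the equivalence relation on roofs agrees with coherent homotopy of coherent maps is substantial and is not a short argument. You should also be careful about the direction of the fractions: in this setting one typically needs a \emph{right} calculus (roofs $\mathbf{X}\leftarrow\mathbf{Z}\rightarrow\mathbf{Y}$), since special morphisms point from finer to coarser index categories; your left-roof formulation would need justification. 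If your goal is only to reproduce the paper's theorem, citing \cite{Prasolov-Extraordinatory-MR1821856} as the paper does is the efficient path.
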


\begin{proof}
Due to \cite{Prasolov-Extraordinatory-MR1821856}, Theorem 1, the category $%
\mathbf{SSh}$, being a full subcategory of $\mathbf{SSh}\left( \mathbf{Pro}%
\left( \mathbf{TOP}\right) \right) $, is therefore a full subcategory of $%
\mathbf{Pro}\left( \mathbf{ANR}\right) \left[ \Sigma ^{-1}\right] $. It is
easy (and standard in strong shape theory) to substitute the category $%
\mathbf{Pro}\left( \mathbf{ANR}\right) \left[ \Sigma ^{-1}\right] $ by the
alternative category $\mathbf{Pro}\left( \mathbf{POL}\right) \left[ \Sigma
^{-1}\right] $ because any $ANR$ is homotopy equivalent to a polyhedron.
\end{proof}

\subsection{Tensor product}

Let $\mathbf{P}\in \mathbf{Pro}\left( k\right) $, $M\in \mathbf{Mod}\left(
k\right) $. Consider the functor%
\begin{eqnarray*}
F_{\mathbf{P},M} &:&\mathbf{Pro}\left( k\right) \longrightarrow \mathbf{Mod}%
\left( k\right) \mathbf{,} \\
F_{\mathbf{P},M}\left( \mathbf{N}\right) &=&Hom_{\mathbf{Mod}\left( k\right)
}\left( M,Hom_{\mathbf{Pro}\left( k\right) }\left( \mathbf{P},\mathbf{N}%
\right) \right) .
\end{eqnarray*}

\begin{theorem}
\label{Th-tensor-product}~

\begin{enumerate}
\item \label{Th-tensor-product-representable}The functor $F_{\mathbf{P},M}$
is representable. It means that there exists a pro-module (unique up to an
isomorphism) $\mathbf{P}\otimes _{k}M$ such that%
\begin{equation*}
Hom_{\mathbf{Pro}\left( k\right) }\left( \mathbf{P}\otimes _{k}M,\mathbf{N}%
\right) 
\simeq%
Hom_{\mathbf{Mod}\left( k\right) }\left( M,Hom_{\mathbf{Pro}\left( k\right)
}\left( \mathbf{P},\mathbf{N}\right) \right)
\end{equation*}%
naturally on $\mathbf{N}\in \mathbf{Pro}\left( k\right) $.

\item \label{Th-tensor-product-functor}Moreover, since the mapping $\left( 
\mathbf{P},M\right) \longmapsto F_{\mathbf{P},M}$ is functorial, then the
mapping $\left( \mathbf{P},M\right) \longmapsto \mathbf{P}\otimes _{k}M$ is
in fact an bi-additive (even $k$-bilinear) functor%
\begin{equation*}
\otimes _{k}:\mathbf{Pro}\left( k\right) \times \mathbf{Mod}\left( k\right)
\longrightarrow \mathbf{Pro}\left( k\right) .
\end{equation*}

\item \label{Th-tensor-product- right-exact}The functor $\otimes _{k}$ is
right exact with respect to both variables.

\item \label{Th-tensor-product-exact}If $M$ is projective, then the functor $%
?\otimes _{k}M:\mathbf{Pro}\left( k\right) \rightarrow \mathbf{Pro}\left(
k\right) $ is exact.
\end{enumerate}
\end{theorem}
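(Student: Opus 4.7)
My strategy is to build $\mathbf{P} \otimes_k M$ from a free presentation of $M$ and then to reduce each of the four assertions to a formal consequence of the universal property and Yoneda's lemma, modulo one genuine point: the exactness of infinite coproducts in $\mathbf{Pro}(k)$. For a free module $F = k^{(\Lambda)}$ I set $\mathbf{P} \otimes_k F := \coprod_{\lambda \in \Lambda} \mathbf{P}$, which exists by Example \ref{Ex-coproduct}; the required adjunction is immediate since $Hom_{\mathbf{Pro}(k)}(\coprod_\Lambda \mathbf{P}, \mathbf{N}) \simeq \prod_\Lambda Hom_{\mathbf{Pro}(k)}(\mathbf{P}, \mathbf{N}) \simeq Hom_k(F, Hom_{\mathbf{Pro}(k)}(\mathbf{P}, \mathbf{N}))$. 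For a general $M$ with free presentation $F_1 \xrightarrow{\partial} F_0 \to M \to 0$, I define $\mathbf{P} \otimes_k M$ as the cokernel, in the abelian category $\mathbf{Pro}(k)$, of $1 \otimes \partial \colon \mathbf{P} \otimes_k F_1 \to \mathbf{P} \otimes_k F_0$. Applying the contravariantly left-exact functor $Hom_{\mathbf{Pro}(k)}(-,\mathbf{N})$ to this cokernel and substituting the free case identifies the result with $\ker\bigl(Hom_k(F_0, Hom_{\mathbf{Pro}(k)}(\mathbf{P},\mathbf{N})) \to Hom_k(F_1, Hom_{\mathbf{Pro}(k)}(\mathbf{P},\mathbf{N}))\bigr) \simeq Hom_k(M, Hom_{\mathbf{Pro}(k)}(\mathbf{P},\mathbf{N})) = F_{\mathbf{P},M}(\mathbf{N})$, establishing (1); independence of the choice of presentation and uniqueness of the representing pro-module are automatic by Yoneda.

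Parts (2) and (3) then follow formally. Functoriality and bi-additivity in (2) are inherited from the representing functor $F_{\mathbf{P},M}$ via Yoneda. For right exactness in $M$ in (3), applying $Hom_k(-,X)$ with $X := Hom_{\mathbf{Pro}(k)}(\mathbf{P},\mathbf{N})$ to an exact $M_1 \to M_2 \to M_3 \to 0$ gives a contravariant left exact sequence; via the defining adjunction this translates to left exactness (in $\mathbf{N}$) of the induced sequence of $Hom_{\mathbf{Pro}(k)}(\mathbf{P} \otimes M_\bullet, \mathbf{N})$, so Yoneda shows $\mathbf{P} \otimes M_1 \to \mathbf{P} \otimes M_2 \to \mathbf{P} \otimes M_3 \to 0$ is a cokernel diagram. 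Right exactness in $\mathbf{P}$ is the same argument, using that $Hom_{\mathbf{Pro}(k)}(-,\mathbf{N})$ is itself contravariantly left exact.

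The main obstacle is part (4). If $M$ is projective, then $M \oplus M' \simeq F = k^{(\Lambda)}$ for some free $F$, and by the additivity already established, $\mathbf{P} \otimes_k M$ is a direct summand of $\mathbf{P} \otimes_k F = \coprod_\Lambda \mathbf{P}$, so it suffices to show that the functor $\coprod_\Lambda \colon \mathbf{Pro}(k) \to \mathbf{Pro}(k)$ is exact. Right exactness is automatic since coproducts commute with colimits, so the genuine task is the AB4 axiom — preservation of monomorphisms. Given a monomorphism $f \colon \mathbf{A} \to \mathbf{B}$, after reindexing I represent it by a level morphism $(f_i \colon A_i \to B_i)_{i \in \mathbf{I}}$; by the kernel description in Proposition \ref{Prop-pro-complex-pro}, monicity of $f$ is equivalent to the pro-module $(\ker f_i)_i$ being zero, i.e., for each $i \in \mathbf{I}$ there is a morphism $i' \to i$ in $\mathbf{I}$ making $\ker f_{i'} \to \ker f_i$ vanish. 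The coproduct formula of Example \ref{Ex-coproduct} presents $\coprod_\Lambda f$ as the level morphism $\bigl(\bigoplus_\lambda f_{j(\lambda)}\bigr)_{j \in \mathbf{I}^\Lambda}$ with kernel $\bigl(\bigoplus_\lambda \ker f_{j(\lambda)}\bigr)_{j \in \mathbf{I}^\Lambda}$; given $j \in \mathbf{I}^\Lambda$, choosing for each $\lambda$ a refinement $j'(\lambda) \to j(\lambda)$ in $\mathbf{I}$ that kills the corresponding component produces a single morphism $j' \to j$ in $\mathbf{I}^\Lambda$ whose induced bonding map is zero, so $\ker(\coprod_\Lambda f) = 0$ and $\coprod_\Lambda f$ is monic. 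The delicate point is precisely this coordinate-wise choice of refinements, together with the routine verification that the product $\mathbf{I}^\Lambda$ of cofiltrant categories is again cofiltrant; once it is dispatched, part (4) drops out.
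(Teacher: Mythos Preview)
Your argument is correct and, for parts (1)--(3), essentially identical to the paper's: the paper also builds $\mathbf{P}\otimes_k M$ from a free presentation of $M$ via the coproduct construction of Example~\ref{Ex-coproduct}, and deduces right exactness from the adjunction (phrased there as ``left adjoints preserve colimits'' rather than your direct Yoneda computation, but this is only cosmetic).

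The one genuine difference is in part (4). The paper simply writes that for $M=k^X$ free, $?\otimes_k M$ is ``a direct sum of identity functors, hence exact as a direct sum of exact functors,'' and then passes to retracts. This tacitly uses that arbitrary coproducts in $\mathbf{Pro}(k)$ preserve monomorphisms (the AB4 axiom), which is not justified there. You supply exactly this missing verification: using the explicit coproduct indexed by $\mathbf{I}^\Lambda$ and the level-kernel description of monomorphisms, you show that the pro-zero condition on $(\ker f_i)_{i\in\mathbf{I}}$ propagates coordinatewise to $(\bigoplus_\lambda \ker f_{j(\lambda)})_{j\in\mathbf{I}^\Lambda}$. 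So your route is the same in outline but strictly more complete at the one point where the paper's proof is elliptical; what it buys you is an honest proof of AB4 for $\mathbf{Pro}(k)$ rather than an appeal to folklore.
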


\begin{proof}
\textbf{Representability}. Compare Theorem 2.1.8 in \cite{Sugiki-2001-33} or
Proposition 2.1 in \cite{Schneiders-MR885939}.

Assume $M=k^{X}$ is a free $k$-module generated by the set $X$, and let 
\begin{equation*}
\mathbf{P}\otimes _{k}M:=\dbigoplus\limits_{X}\mathbf{P}
\end{equation*}%
be the direct sum in $\mathbf{Pro}\left( k\right) $. Then 
\begin{eqnarray*}
&&Hom_{\mathbf{Pro}\left( k\right) }\left( \mathbf{P}\otimes _{k}M,\mathbf{N}%
\right) 
\simeq%
Hom_{\mathbf{Pro}\left( k\right) }\left( \dbigoplus\limits_{X}\mathbf{P},%
\mathbf{N}\right) 
\simeq
\\
&&\dprod\limits_{X}Hom_{\mathbf{Pro}\left( k\right) }\left( \mathbf{P},%
\mathbf{N}\right) 
\simeq%
Hom_{\mathbf{Mod}\left( k\right) }\left( M,Hom_{\mathbf{Pro}\left( k\right)
}\left( \mathbf{P},\mathbf{N}\right) \right)
\end{eqnarray*}%
as desired. Consider now a general $k$-module $M$. It can be represented as%
\begin{equation*}
M=\mathbf{coker}\left( k^{Y}\overset{f}{\longrightarrow }k^{X}\right)
\end{equation*}%
where $f$ is given by a (infinite in general) $X\times Y$ matrix with
coefficients in $k$. Define%
\begin{equation*}
\mathbf{P}\otimes _{k}M=\mathbf{coker}\left( \dbigoplus\limits_{Y}\mathbf{P}%
\overset{F}{\longrightarrow }\dbigoplus\limits_{X}\mathbf{P}\right)
\end{equation*}%
where $F$ is given by the same $X\times Y$ matrix. Then%
\begin{eqnarray*}
&&Hom_{\mathbf{Pro}\left( k\right) }\left( \mathbf{P}\otimes _{k}M,\mathbf{N}%
\right) 
\simeq
\\
&&\ker \left( Hom_{\mathbf{Pro}\left( k\right) }\left( \dbigoplus\limits_{X}%
\mathbf{P},\mathbf{N}\right) \longrightarrow Hom_{\mathbf{Pro}\left(
k\right) }\left( \dbigoplus\limits_{Y}\mathbf{P},\mathbf{N}\right) \right) 
\simeq
\\
&&\ker \left( Hom_{\mathbf{Mod}\left( k\right) }\left( k^{X},Hom_{\mathbf{Pro%
}\left( k\right) }\left( \mathbf{P},\mathbf{N}\right) \right)
\longrightarrow Hom_{\mathbf{Mod}\left( k\right) }\left( k^{Y},Hom_{\mathbf{%
Pro}\left( k\right) }\left( \mathbf{P},\mathbf{N}\right) \right) \right) 
\simeq
\\
&&Hom_{\mathbf{Mod}\left( k\right) }\left( \mathbf{coker}\left(
k^{Y}\rightarrow k^{X}\right) ,Hom_{\mathbf{Pro}\left( k\right) }\left( 
\mathbf{P},\mathbf{N}\right) \right) 
\simeq
\\
&&%
\simeq%
Hom_{\mathbf{Mod}\left( k\right) }\left( M,Hom_{\mathbf{Pro}\left( k\right)
}\left( \mathbf{P},\mathbf{N}\right) \right)
\end{eqnarray*}%
as desired.

\textbf{Exactness}. We will prove even more: $\otimes _{k}$ commutes with
arbitrary direct limits (with respect to both variables). Indeed, let $%
\mathbf{P}\in \mathbf{Pro}\left( k\right) $. The functor 
\begin{equation*}
\mathbf{P}\otimes _{k}?:\mathbf{Mod}\left( k\right) \longrightarrow \mathbf{%
Pro}\left( k\right)
\end{equation*}%
is left adjoint to the functor 
\begin{equation*}
Hom_{\mathbf{Pro}\left( k\right) }\left( \mathbf{P},?\right) :\mathbf{Pro}%
\left( k\right) \longrightarrow \mathbf{Mod}\left( k\right) .
\end{equation*}%
Hence, it commutes with direct limits, and therefore is right exact.

Let now $M\in \mathbf{Mod}\left( k\right) $, and let $\mathbf{P}=%
\underrightarrow{\lim }_{i\in \mathbf{I}}~\mathbf{P}_{i}$ be the direct
limit of a diagram (not necessarily filtrant!) of pro-modules. For any
pro-module $\mathbf{N}$,%
\begin{eqnarray*}
&&Hom_{\mathbf{Pro}\left( k\right) }\left( \left( \underrightarrow{\lim }%
_{i\in \mathbf{I}}~\mathbf{P}_{i}\right) \otimes _{k}M,\mathbf{N}\right) 
\simeq%
Hom_{\mathbf{Mod}\left( k\right) }\left( M,Hom_{\mathbf{Pro}\left( k\right)
}\left( \left( \underrightarrow{\lim }_{i\in \mathbf{I}}~\mathbf{P}%
_{i}\right) ,\mathbf{N}\right) \right) 
\simeq
\\
&&Hom_{\mathbf{Mod}\left( k\right) }\left( M,\underleftarrow{\lim }_{i\in 
\mathbf{I}}~Hom_{\mathbf{Pro}\left( k\right) }\left( \mathbf{P}_{i},\mathbf{N%
}\right) \right) 
\simeq
\\
&&\underleftarrow{\lim }_{i\in \mathbf{I}}~Hom_{\mathbf{Mod}\left( k\right)
}\left( M,Hom_{\mathbf{Pro}\left( k\right) }\left( \mathbf{P}_{i},\mathbf{N}%
\right) \right) 
\simeq
\\
&&\underleftarrow{\lim }_{i\in \mathbf{I}}~Hom_{\mathbf{Pro}\left( k\right)
}\left( \mathbf{P}_{i}\otimes _{k}M,\mathbf{N}\right) .
\end{eqnarray*}%
Therefore, $\left( \underrightarrow{\lim }_{i\in \mathbf{I}}~\mathbf{P}%
_{i}\right) \otimes _{k}M%
\simeq%
\underrightarrow{\lim }_{i\in \mathbf{I}}~\left( \mathbf{P}_{i}\otimes
_{k}M\right) $, and $?\otimes _{k}M:\mathbf{Pro}\left( k\right) \rightarrow 
\mathbf{Pro}\left( k\right) $ is right exact as well.

Finally, let $M$ be a free $k$-module, $M%
\simeq%
k^{X}$ for some set $X$. Since $\otimes _{k}$ commutes with direct limits,
the functor $?\otimes _{k}M$ is the direct sum of the identity functors $%
\otimes _{k}k$. Hence, $?\otimes _{k}M$ is exact as a direct sum of exact
functors. If $M$ is projective, then it is a retract of a free module $F$.
Therefore, $?\otimes _{k}M$, being a retract of the exact functor $?\otimes
_{k}F$, is exact as well.
\end{proof}

\begin{remark}
\label{Rem-tensor-product-complexes}Let $\mathbf{P}_{\ast }\in \mathbf{Pro}%
\left( \mathbf{CHAIN}\left( k\right) \right) $ be a pro-complex of $k$%
-modules, and let $M$ be a $k$-module. Then the tensor product $\mathbf{P}%
_{\ast }\otimes _{k}M$ can be represented by a complex of inverse systems
and level differentials indexed by the \textbf{same} index category $J$:%
\begin{eqnarray*}
\mathbf{P}_{n}\otimes _{k}M &=&\left( \left( Q_{\ast }\right) _{j}\right)
_{j\in \mathbf{J}}, \\
\left( d_{n}:\mathbf{P}_{n}\longrightarrow \mathbf{P}_{n-1}\right) &=&\left(
d_{n}:\left( Q_{n}\right) _{j}\longrightarrow \left( Q_{n-1}\right)
_{j}\right) _{j\in \mathbf{J}}.
\end{eqnarray*}%
Indeed, the complex of pro-modules 
\begin{equation*}
\mathbf{Q}_{n}=\mathbf{P}_{n}\otimes _{k}M=\mathbf{coker}\left(
\dbigoplus\limits_{Y}\mathbf{P}_{n}\overset{F}{\longrightarrow }%
\dbigoplus\limits_{X}\mathbf{P}_{n}\right) ,
\end{equation*}%
as in the proof above, can be represented by the following inverse system.
Let $\mathbf{J}$ be the product of the $X\times Y$ copies of $\mathbf{I}$: $%
\mathbf{J}=\mathbf{I}^{X\times Y}$. Given $\mathbf{J}\ni j=\left(
i_{x,y}\right) _{x\in X,y\in Y}$, let%
\begin{eqnarray*}
\left( Q_{n}\right) _{j} &=&\mathbf{coker}\left( \dbigoplus\limits_{y\in
Y}\left( P_{n}\right) _{j\left( x,y\right) }\overset{F_{j}}{\longrightarrow }%
\dbigoplus\limits_{x\in X}\left( P_{n}\right) _{j\left( x,y\right) }\right) ,
\\
\left( d_{n}:Q_{n}\longrightarrow Q_{n-1}\right) _{j} &=&\mathbf{coker}%
\left( \dbigoplus\limits_{y\in Y}\left( d_{n}\right) _{j\left( x,y\right) }%
\overset{F_{j}}{\longrightarrow }\dbigoplus\limits_{x\in X}\left(
d_{n}\right) _{j\left( x,y\right) }\right) ,
\end{eqnarray*}%
where $F_{j}$ is the following constant (not depending on $j$) $X\times Y$
matrix: $\left( F_{j}\right) _{x,y}=f_{x,y}$.
\end{remark}

\begin{definition}
There is another (weak) tensor product%
\begin{equation*}
\widetilde{\otimes }_{k}:\left( \mathbf{Pro}\left( k\right) \right) \times 
\mathbf{Mod}\left( k\right) \longrightarrow \mathbf{Pro}\left( k\right)
\end{equation*}%
which is defined as follows:%
\begin{equation*}
\mathbf{P}\widetilde{\otimes }_{k}M:=\left( P_{i}\otimes _{k}M\right) _{i\in 
\mathbf{I}}
\end{equation*}%
where the pro-module $\mathbf{P}$ is defined by an inverse system $\left(
P_{i}\right) _{i\in \mathbf{I}}$.
\end{definition}

\begin{theorem}
\label{Th-map-to-weak-products}There is a natural homomorphism $\otimes
_{k}\rightarrow \widetilde{\otimes }_{k}$. If $M$ is \textbf{finitely
generated}, the homomorphism above becomes an \textbf{epimorphism} $\mathbf{P%
}\otimes _{k}M\rightarrow \mathbf{P}\widetilde{\otimes }_{k}M$. If $M$ is 
\textbf{finitely presented}, the homomorphism becomes an \textbf{isomorphism}%
.
\end{theorem}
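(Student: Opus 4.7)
The plan is to build the natural transformation by adjunction, verify it is an isomorphism directly for finitely generated free modules, and then deduce the two stated conclusions from right exactness applied to a (finite) presentation of $M$. The one substantive technical point will be promoting right exactness of $\widetilde{\otimes}_{k}$ in the module variable from level by level to the full pro-category.

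\textbf{Construction of the natural map.} For each $m\in M$, the rule $p\mapsto p\otimes m$ defines a level morphism
\[
\phi_{m}\colon \mathbf{P}=(P_{i})_{i\in\mathbf{I}}\longrightarrow (P_{i}\otimes_{k}M)_{i\in\mathbf{I}}=\mathbf{P}\widetilde{\otimes}_{k}M
\]
in $\mathbf{Pro}(k)$, and $m\mapsto\phi_{m}$ is $k$-linear. This determines a $k$-linear map $M\to \mathrm{Hom}_{\mathbf{Pro}(k)}(\mathbf{P},\mathbf{P}\widetilde{\otimes}_{k}M)$, whose adjoint under part (\ref{Th-tensor-product-representable}) of Theorem~\ref{Th-tensor-product} is the desired natural morphism $\mathbf{P}\otimes_{k}M\to\mathbf{P}\widetilde{\otimes}_{k}M$. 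Naturality in both variables is immediate from the construction.

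\textbf{Isomorphism for finitely generated free modules.} For $M=k^{n}$ with $n$ finite, the construction of $\otimes_{k}$ gives $\mathbf{P}\otimes_{k}k^{n}\simeq \mathbf{P}^{\oplus n}$, and Example~\ref{Ex-coproduct} represents this coproduct as the system $(P_{i_{1}}\oplus\cdots\oplus P_{i_{n}})_{(i_{1},\ldots,i_{n})\in\mathbf{I}^{n}}$. Since $\mathbf{I}$ is cofiltrant, the diagonal functor $\Delta\colon\mathbf{I}\to\mathbf{I}^{n}$ is cofinal, so restriction along $\Delta$ identifies this system with $(P_{i}^{n})_{i\in\mathbf{I}}=\mathbf{P}\widetilde{\otimes}_{k}k^{n}$; inspection on generators shows that this identification agrees with the natural map built in the previous step.

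\textbf{From free to (finitely) presented $M$.} Given a surjection $k^{n}\twoheadrightarrow M$ with $n$ finite, right exactness of $\otimes_{k}$ (Theorem~\ref{Th-tensor-product}) yields the top row, extended by $\to 0$, of the commutative square
\[
\begin{array}{ccc}
\mathbf{P}\otimes_{k}k^{n} & \longrightarrow & \mathbf{P}\otimes_{k}M \\
\downarrow & & \downarrow \\
\mathbf{P}\widetilde{\otimes}_{k}k^{n} & \longrightarrow & \mathbf{P}\widetilde{\otimes}_{k}M.
\end{array}
\]
For the bottom row, the ordinary tensor product is right exact, giving level-wise exactness at each $i\in\mathbf{I}$; by the level-cokernel description of cokernels of level morphisms in $\mathbf{Pro}(k)$ established in the proof of Proposition~\ref{Prop-pro-complex-pro}, this is right exact in $\mathbf{Pro}(k)$. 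The left vertical is an isomorphism by the previous step, so a diagram chase gives that the right vertical is an epimorphism. If in addition $M$ is finitely presented, extend to $k^{m}\to k^{n}\to M\to 0$ with $m,n$ finite: both rows stay right exact, both verticals over the free parts are isomorphisms, and the induced map on cokernels --- precisely $\mathbf{P}\otimes_{k}M\to\mathbf{P}\widetilde{\otimes}_{k}M$ --- is an isomorphism.
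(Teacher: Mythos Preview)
Your proof is correct and follows essentially the same strategy as the paper: verify the map is an isomorphism on finitely generated free modules, then use right exactness of both tensor products against a (finite) presentation. Your version is more explicit than the paper's in two places---you construct the natural transformation carefully via the adjunction (the paper leaves this implicit) and you justify the free case via cofinality of the diagonal $\mathbf{I}\hookrightarrow\mathbf{I}^{n}$ rather than simply invoking additivity---but these are elaborations of the same argument, not a different route.
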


\begin{proof}
It is easy to see that, if $M=k$, then the homomorphism becomes an
isomorphism:%
\begin{equation*}
\mathbf{P}\otimes _{k}M=\mathbf{P}\otimes _{k}k=\mathbf{P=\mathbf{P}}%
\widetilde{\otimes }_{k}k\mathbf{=P}\widetilde{\otimes }_{k}M.
\end{equation*}

Since both functors are additive (even $k$-linear), the same is true when $%
M=k^{n}$ is a free finitely generated module:%
\begin{equation*}
\mathbf{P}\otimes _{k}M=\mathbf{P}\otimes _{k}k^{n}=\dbigoplus\limits^{n}%
\mathbf{P=\mathbf{P}\widetilde{\otimes }}_{k}k^{n}\mathbf{=P}\widetilde{%
\otimes }_{k}M.
\end{equation*}

If $M$ is finitely generated, then%
\begin{eqnarray*}
M &=&\mathbf{coker}\left( k^{X}\longrightarrow k^{n}\right) , \\
\mathbf{P}\otimes _{k}M &=&\mathbf{coker}\left( \dbigoplus\limits_{X}\mathbf{%
P}\longrightarrow \mathbf{P}^{n}\right) , \\
\mathbf{P}\widetilde{\otimes }_{k}M &=&\mathbf{coker}\left( \mathbf{P}%
\widetilde{\otimes }_{k}k^{X}\longrightarrow \mathbf{P}^{n}\right) ,
\end{eqnarray*}%
and both pro-modules are factormodules of the same pro-module $\mathbf{P}%
^{n} $, hence $\mathbf{P}\otimes _{k}M\rightarrow \mathbf{P}\widetilde{%
\otimes }_{k}M$ is an epimorphism. Finally, if $M$ is finitely presented,
then%
\begin{eqnarray*}
M &=&\mathbf{coker}\left( k^{m}\longrightarrow k^{n}\right) , \\
\mathbf{P}\otimes _{k}M &=&\mathbf{coker}\left( \mathbf{P}%
^{m}\longrightarrow \mathbf{P}^{n}\right) =\mathbf{coker}\left( \mathbf{P}%
\widetilde{\otimes }_{k}k^{m}\longrightarrow \mathbf{P}\widetilde{\otimes }%
_{k}k^{n}\right) =\mathbf{P}\widetilde{\otimes }_{k}M.
\end{eqnarray*}
\end{proof}

\subsection{Quasi-projectives}

\begin{definition}
\label{Def-quasi-projective}(dual to \cite{Kashiwara-Categories-MR2182076},
Definition 15.2.1) A pro-module $\mathbf{P}$ is called \textbf{%
quasi-projective} if the functor%
\begin{equation*}
Hom_{\mathbf{Pro}\left( k\right) }\left( \mathbf{P},?\right) :\mathbf{Mod}%
\left( k\right) \longrightarrow \mathbf{Mod}\left( k\right)
\end{equation*}%
is exact.
\end{definition}

\begin{proposition}
\label{Prop-quasi-projective}A pro-module $\mathbf{P}$ is quasi-projective
iff it is isomorphic to a pro-module $\left( Q_{i}\right) _{i\in \mathbf{I}}$
where all modules $Q_{i}\in \mathbf{Mod}\left( k\right) $ are projective.
\end{proposition}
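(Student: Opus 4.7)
The plan is to prove both implications of the biconditional; the ``if'' direction is a direct filtered-colimit argument, while the ``only if'' direction requires a reindexing construction dual to the proof of Proposition 15.2.3 in \cite{Kashiwara-Categories-MR2182076} for ind-injectives.

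For the ``if'' direction, if $\mathbf{P}\simeq(Q_{i})_{i\in\mathbf{I}}$ with every $Q_{i}$ projective, then for any $M\in\mathbf{Mod}(k)$, viewed as a constant pro-module,
\[
Hom_{\mathbf{Pro}(k)}(\mathbf{P},M)\simeq\underrightarrow{\lim}_{i\in\mathbf{I}}Hom_{k}(Q_{i},M).
\]
Since $\mathbf{I}$ is cofiltrant, this direct limit is filtered; each $Hom_{k}(Q_{i},?)$ is exact by projectivity of $Q_{i}$; and filtered colimits of exact sequences of $k$-modules remain exact. Hence $Hom_{\mathbf{Pro}(k)}(\mathbf{P},?)\colon\mathbf{Mod}(k)\rightarrow\mathbf{Mod}(k)$ is exact, so $\mathbf{P}$ is quasi-projective.

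For the ``only if'' direction, assume $\mathbf{P}$ is quasi-projective. By the Marde{\u{s}}i{\'{c}}--Segal trick I may represent $\mathbf{P}=(P_{i})_{i\in\mathbf{I}}$ with $\mathbf{I}$ a cofiltered directed set. My strategy is to reindex by a finer cofiltered category $\mathbf{J}$ whose objects are triples $(i,Q,q\colon Q\twoheadrightarrow P_{i})$ with $Q$ a projective $k$-module, and whose morphisms $(i,Q,q)\rightarrow(i',Q',q')$ consist of a morphism $i\rightarrow i'$ in $\mathbf{I}$ together with a $k$-linear map $Q\rightarrow Q'$ compatible with the structure map $P_{i}\rightarrow P_{i'}$ and the chosen epimorphisms. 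Projectivity of $Q$ supplies a lift of $Q\xrightarrow{q}P_{i}\rightarrow P_{i'}$ through $q'$, producing many morphisms in $\mathbf{J}$; from this I expect to deduce that $\mathbf{J}$ is cofiltered and that the forgetful functor $\pi\colon\mathbf{J}\rightarrow\mathbf{I}$ is cofinal. Consequently $(P_{\pi(j)})_{j\in\mathbf{J}}\simeq\mathbf{P}$ in $\mathbf{Pro}(k)$, and the $q_{j}$'s assemble into a level epimorphism $(Q_{j})_{j\in\mathbf{J}}\rightarrow(P_{\pi(j)})_{j\in\mathbf{J}}$ with all $Q_{j}$ projective.

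The main obstacle is showing this level epimorphism is an isomorphism in $\mathbf{Pro}(k)$, equivalently that the kernel pro-module $(K_{j}=\ker q_{j})_{j\in\mathbf{J}}$ vanishes. This reduces to a Mittag-Leffler-type condition: every element of $K_{j}$ must be killed after passing to some refinement $j\rightarrow j'$ in $\mathbf{J}$. Here the quasi-projectivity of $\mathbf{P}$ enters decisively: applying exactness of $Hom_{\mathbf{Pro}(k)}(\mathbf{P},?)$ to the short exact sequence $0\rightarrow K_{j}\rightarrow Q_{j}\rightarrow P_{i}\rightarrow 0$ produces a lift of the canonical morphism $\mathbf{P}\rightarrow P_{i}$ to some $\mathbf{P}\rightarrow Q_{j}$, and chasing such lifts through refinements $(i',Q',q')\in\mathbf{J}$ chosen to absorb the problematic element of $K_{j}$ should yield the required annihilation. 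I expect the trickiest sub-step to be pre-equalizing parallel morphisms in $\mathbf{J}$, where one must further refine $\mathbf{I}$ to identify two a priori distinct lifts of a single structure map before arguing in the fiber; the dual Kashiwara--Schapira argument for ind-injectives provides the template for making this step rigorous.
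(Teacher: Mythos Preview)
Your proposal is correct and matches the paper's approach exactly: the paper's entire proof is the single sentence ``The statement is dual to \cite{Kashiwara-Categories-MR2182076}, Proposition 15.2.3,'' and you are sketching precisely that dualization. Your ``if'' direction is complete, and your reindexing construction for the ``only if'' direction, together with the use of quasi-projectivity to lift the canonical projection $\mathbf{P}\to P_{i}$ through $q_{j}$ and thereby kill the kernel pro-module, is the correct outline of the Kashiwara--Schapira argument.

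One small sharpening: the cofiltrancy of $\mathbf{J}$ (specifically the pre-equalizer axiom) and the vanishing of $(K_{j})$ are not really separate obstacles but two faces of the same lifting argument. Given parallel morphisms with $Q'$-components $f,g\colon Q'\to Q$, the quasi-projective lift $s\colon P_{i''}\to Q'$ of the bonding map lets you replace $Q'$ by some $Q''\twoheadrightarrow P_{i''}$ with structure map $Q''\to Q'$ factoring through $s$; then $(f-g)$ precomposed with this map lands in $K$ and is controlled by the same mechanism that shows $(K_{j})\simeq 0$. So once you have the lift, both the equalizer condition and the kernel-vanishing follow in one stroke, and your anticipated ``trickiest sub-step'' dissolves into the main argument rather than requiring a separate refinement.
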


\begin{proof}
The statement is dual to \cite{Kashiwara-Categories-MR2182076}, Proposition
15.2.3.
\end{proof}

\begin{remark}
\label{Rem-not-enough-projectives}The category $\mathbf{Pro}\left( k\right) $
does not have enough projectives (compare with \cite%
{Kashiwara-Categories-MR2182076}, Corollary 15.1.3). However, it has enough
quasi-projectives (see Proposition \ref{Prop-enough-quasi-projectives}
below).
\end{remark}

\begin{proposition}
\label{Prop-enough-quasi-projectives}For any pro-module $\mathbf{M}$ there
exists an epimorphism $\mathbf{P}\rightarrow \mathbf{M}$ where $\mathbf{P}$
is quasi-projective.
\end{proposition}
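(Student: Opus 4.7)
The plan is to construct a quasi-projective covering of $\mathbf{M}$ by applying the free-module functor levelwise to any inverse system representing $\mathbf{M}$, and then verify that the resulting level morphism is indeed an epimorphism in $\mathbf{Pro}(k)$.

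First, I would represent the pro-module $\mathbf{M}$ by an inverse system $(M_i)_{i\in\mathbf{I}}$ of $k$-modules indexed by a small cofiltrant category $\mathbf{I}$. The key observation is that the free-module construction $k^{(-)}\colon \mathbf{Set}\to\mathbf{Mod}(k)$ is functorial; composing it with the forgetful functor $\mathbf{Mod}(k)\to\mathbf{Set}$ yields a functor $F\colon \mathbf{Mod}(k)\to\mathbf{Mod}(k)$ equipped with a natural epimorphism $\varepsilon\colon F\to \mathrm{id}$ (the canonical map $k^{M}\to M$ sending a basis element $[m]$ to $m$). Define $P_i := F(M_i) = k^{M_i}$, with structure morphisms $P_i\to P_j$ induced functorially from the bonding maps $M_i\to M_j$. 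This gives an object $\mathbf{P} = (P_i)_{i\in\mathbf{I}} \in \mathbf{Pro}(k)$, and the components $\varepsilon_{M_i}\colon P_i\to M_i$ assemble into a level morphism $\varepsilon\colon\mathbf{P}\to\mathbf{M}$.

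Next, since every $P_i = k^{M_i}$ is free (hence projective) as a $k$-module, Proposition \ref{Prop-quasi-projective} immediately gives that $\mathbf{P}$ is quasi-projective. It remains to check that $\varepsilon$ is an epimorphism in $\mathbf{Pro}(k)$. Following the computation of cokernels of level morphisms in the proof of Proposition \ref{Prop-pro-complex-pro}, the cokernel of $\varepsilon$ is represented by the level system $(\mathbf{coker}(\varepsilon_{M_i}))_{i\in\mathbf{I}} = (0)_{i\in\mathbf{I}}$, which is the zero pro-module. Hence $\varepsilon$ is an epimorphism in the abelian category $\mathbf{Pro}(k)$.

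There is no real obstacle here; the only subtlety is ensuring functoriality of the pointwise projective covers, and the \emph{free} functor on the underlying sets handles this automatically—this is why one uses free rather than, say, an abstractly chosen projective cover, which would not be canonical. Once functoriality is arranged, the statement follows from two facts already established in the paper: the levelwise characterization of quasi-projectives (Proposition \ref{Prop-quasi-projective}), and the levelwise computation of cokernels in $\mathbf{Pro}(k)$.
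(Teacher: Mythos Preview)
Your proof is correct and follows essentially the same approach as the paper: apply the free-module functor levelwise to obtain $\mathbf{P}=(F(M_i))_{i\in\mathbf{I}}$ together with the canonical level epimorphism onto $\mathbf{M}$. The paper's argument is terser, simply asserting that the family of surjections $f_i\colon F(M_i)\to M_i$ defines a level epimorphism, whereas you additionally justify quasi-projectivity via Proposition~\ref{Prop-quasi-projective} and the epimorphism claim via the levelwise cokernel computation from Proposition~\ref{Prop-pro-complex-pro}.
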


\begin{proof}
The statement is dual to the rather complicated Theorem 15.2.5 from \cite%
{Kashiwara-Categories-MR2182076}. However, the proof is much simpler in our
case. Given $\mathbf{M}=\left( M_{i}\right) _{i\in \mathbf{I}}$, let $%
\mathbf{P}=\left( P_{i}=F\left( M_{i}\right) \right) _{i\in \mathbf{I}}$,
where $F\left( M_{i}\right) $ is the free $k$-module generated by the set of
symbols $\left( \left[ m\right] \right) _{m\in M}$. A family of epimorphisms 
\begin{equation*}
f_{i}:P_{i}\longrightarrow M_{i}~\left( f_{i}\left( \dsum\limits_{j}k_{j} 
\left[ m_{j}\right] \right) =\dsum\limits_{j}k_{j}m_{j},k_{j}\in k,m_{j}\in
M_{i}\right) ,
\end{equation*}%
defines the desired level epimorphism $f:\mathbf{P}\rightarrow \mathbf{M}$.
\end{proof}

\subsection{Quasi-noetherian rings}

\begin{definition}
\label{Def-quasi-noetherian}A commutative ring $k$ is called \textbf{%
quasi-noetherian} if for any quasi-projective $\mathbf{P}\in \mathbf{Pro}%
\left( k\right) $, and any injective $J\in \mathbf{Mod}\left( k\right) $,
the $k$-module $Hom_{\mathbf{Pro}\left( k\right) }\left( \mathbf{P},J\right) 
$ is injective.
\end{definition}

\begin{remark}
In \cite{Sugiki-2001-33}, Definition 2.1.10, such rings are called
\textquotedblleft satisfying condition A\textquotedblright . However,
Proposition \ref{Prop-noetherian-is-quasi-noetherian} below justifies our
name.
\end{remark}

\begin{lemma}
\label{Direct-limit-injectives}Let $k$ be a noetherian ring. Then any
filtrant direct limit of injective $k$-modules is injective.
\end{lemma}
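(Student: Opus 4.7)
The plan is to reduce to Baer's criterion and then use that, over a noetherian ring, ideals are finitely presented, so that $\operatorname{Hom}(I,-)$ commutes with filtrant direct limits.

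First, let $(E_\alpha)_{\alpha \in A}$ be a filtrant system of injective $k$-modules with colimit $E = \varinjlim_\alpha E_\alpha$, and denote by $\iota_\alpha : E_\alpha \to E$ the canonical maps. By Baer's criterion, to show $E$ is injective it suffices to verify that, for every ideal $I \subseteq k$, every $k$-linear map $f : I \to E$ extends to a map $\tilde f : k \to E$.

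Next, I would use noetherianness in the form: every ideal $I$ of $k$ is finitely generated, hence (being a submodule of the finitely generated module $k$) finitely presented. A standard fact says that for a finitely presented module $M$ and a filtrant system $(N_\alpha)$, the natural map $\varinjlim_\alpha \operatorname{Hom}(M, N_\alpha) \to \operatorname{Hom}(M, \varinjlim_\alpha N_\alpha)$ is an isomorphism. Applying this to $M = I$ and $N_\alpha = E_\alpha$, the map $f : I \to E$ factors as $f = \iota_{\alpha_0} \circ g$ for some index $\alpha_0$ and some $g : I \to E_{\alpha_0}$.

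Finally, since $E_{\alpha_0}$ is injective, $g$ extends to a map $\tilde g : k \to E_{\alpha_0}$, and then $\tilde f := \iota_{\alpha_0} \circ \tilde g : k \to E$ is the required extension of $f$. Baer's criterion gives injectivity of $E$.

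The only nontrivial input is the commutation of $\operatorname{Hom}(I, -)$ with filtrant colimits when $I$ is finitely presented; this is the main place where noetherianness of $k$ is used, and it is the step I would regard as the "heart" of the argument. The rest is a direct application of Baer's criterion and the definition of injectivity.
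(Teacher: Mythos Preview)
Your argument is correct and is precisely the standard proof of this direction of the Bass--Papp theorem. The paper itself does not give a proof but simply cites Lam, \emph{Lectures on Modules and Rings}, Theorem~3.46; the argument you have written is essentially the one found there, so there is nothing to compare.
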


\begin{proof}
See \cite{Lam-MR1653294}, Theorem 3.46.
\end{proof}

\begin{proposition}
\label{Prop-noetherian-is-quasi-noetherian}A noetherian ring is
quasi-noetherian.
\end{proposition}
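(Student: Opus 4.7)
The plan is to reduce the problem to a filtrant colimit of injective $k$-modules, and then invoke Lemma~\ref{Direct-limit-injectives}. Let $\mathbf{P}\in\mathbf{Pro}(k)$ be quasi-projective and $J\in\mathbf{Mod}(k)$ injective. By Proposition~\ref{Prop-quasi-projective}, $\mathbf{P}$ is isomorphic in $\mathbf{Pro}(k)$ to an inverse system $(Q_i)_{i\in\mathbf{I}}$ in which every module $Q_i$ is projective. Since $J$, viewed as a constant pro-module, is indexed by a trivial one-object category, the definition of $Hom_{\mathbf{Pro}(k)}$ (Definition~\ref{Pro-category}) collapses to
\begin{equation*}
Hom_{\mathbf{Pro}(k)}(\mathbf{P},J)\simeq \underrightarrow{\lim}_{i\in\mathbf{I}} Hom_{\mathbf{Mod}(k)}(Q_i,J),
\end{equation*}
a colimit of the contravariant functor $i\mapsto Hom_k(Q_i,J)$ taken over $\mathbf{I}^{op}$. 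Because $\mathbf{I}$ is cofiltrant, $\mathbf{I}^{op}$ is filtrant, so we are dealing with a genuine filtrant direct limit.

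Next I would check that each term $Hom_k(Q_i,J)$ of this system is an injective $k$-module. Indeed, $Q_i$ is projective, hence a direct summand of some free module $k^{X_i}$, so $Hom_k(Q_i,J)$ is a direct summand of
\begin{equation*}
Hom_k(k^{X_i},J)\simeq \prod_{X_i} J.
\end{equation*}
An arbitrary product of injective modules is injective (by Baer's criterion, which applies in any ring), and a direct summand of an injective module is injective. Hence $Hom_k(Q_i,J)$ is injective.

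Finally, the key input from the noetherian hypothesis is Lemma~\ref{Direct-limit-injectives}: over a noetherian ring, a filtrant direct limit of injective $k$-modules is again injective. Applying it to our filtrant diagram $(Hom_k(Q_i,J))_{i\in\mathbf{I}^{op}}$ yields that $Hom_{\mathbf{Pro}(k)}(\mathbf{P},J)$ is injective, which is precisely the defining condition for $k$ being quasi-noetherian. The only delicate point is the bookkeeping between the cofiltrant indexing of inverse systems and the filtrant colimit appearing in the Hom formula; everything else is a combination of the standard fact that summands of products of injectives are injective, and Lam's theorem recalled as Lemma~\ref{Direct-limit-injectives}.
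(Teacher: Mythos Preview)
Your proof is correct and follows essentially the same approach as the paper: represent $\mathbf{P}$ by a system of projectives, identify $Hom_{\mathbf{Pro}(k)}(\mathbf{P},J)$ as a filtrant direct limit of the modules $Hom_k(Q_i,J)$, and apply Lemma~\ref{Direct-limit-injectives}. The only difference is that you spell out explicitly why each $Hom_k(Q_i,J)$ is injective (as a summand of a product of copies of $J$), whereas the paper takes this standard fact for granted.
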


\begin{remark}
This proposition answers positively Conjecture 2.1.11 from \cite%
{Sugiki-2001-33}.
\end{remark}

\begin{proof}
A quasi-projective $\mathbf{P}$ can be represented by an inverse system $%
\left( P_{i}\right) _{i\in \mathbf{I}}$ where all $P_{i}$ are projective $k$%
-modules. Therefore, the $k$-module%
\begin{equation*}
Hom_{\mathbf{Pro}\left( k\right) }\left( \mathbf{P},J\right) =%
\underrightarrow{\lim }_{i\in \mathbf{I}}~Hom_{\mathbf{Mod}\left( k\right)
}\left( P_{i},J\right) ,
\end{equation*}%
being a filtrant direct limit of injectives, is injective as well, by Lemma %
\ref{Direct-limit-injectives}.
\end{proof}

The proposition \ref{Prop-quasi-projectives-are-flat} below shows that
quasi-projective pro-modules over a quasi-noetherian ring are flat.

\begin{proposition}
\label{Prop-quasi-projectives-are-flat}If $k$ is quasi-noetherian and $%
\mathbf{P}\in \mathbf{Pro}\left( k\right) $ is quasi-projective, then the
functor $\mathbf{P}\otimes _{k}?:\mathbf{Mod}\left( k\right) \rightarrow 
\mathbf{Mod}\left( k\right) $ is exact.
\end{proposition}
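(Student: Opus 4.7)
By Theorem~\ref{Th-tensor-product}, the functor $\mathbf{P}\otimes_k?$ is already right exact, so the task reduces to showing that it preserves monomorphisms. Given a short exact sequence $0\to M'\to M\to M''\to 0$ in $\mathbf{Mod}(k)$, set $A':=\mathbf{P}\otimes_k M'$, $A:=\mathbf{P}\otimes_k M$, $A'':=\mathbf{P}\otimes_k M''$; right exactness delivers an exact complex $A'\xrightarrow{f}A\to A''\to 0$ in $\mathbf{Pro}(k)$, and the goal is to show that $f$ is a monomorphism.

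The strategy is to test $f$ by mapping into constant pro-modules coming from injective $k$-modules. For any injective $J\in\mathbf{Mod}(k)$, set $H_J:=Hom_{\mathbf{Pro}(k)}(\mathbf{P},J)$; Definition~\ref{Def-quasi-noetherian} guarantees that $H_J$ is again injective in $\mathbf{Mod}(k)$. The adjoint isomorphism of Theorem~\ref{Th-tensor-product}~(\ref{Th-tensor-product-representable}) identifies $Hom_{\mathbf{Pro}(k)}(\mathbf{P}\otimes_k -,J)$ on $\mathbf{Mod}(k)$ with $Hom_{\mathbf{Mod}(k)}(-,H_J)$, and the latter is exact because $H_J$ is injective. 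Applied to the original short exact sequence this yields, for every injective $J$, a short exact sequence
\[
0\to Hom_{\mathbf{Pro}(k)}(A'',J)\to Hom_{\mathbf{Pro}(k)}(A,J)\to Hom_{\mathbf{Pro}(k)}(A',J)\to 0;
\]
in particular, every pro-morphism $A'\to J$ factors through $f$.

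To conclude, let $K:=\ker f$ and suppose for a contradiction that $K\neq 0$. The constant pro-modules associated with injective $k$-modules form a cogenerating class in $\mathbf{Pro}(k)$: if $\mathbf{X}=(X_\alpha)$ is nonzero, some component $X_{\alpha_0}$ has the property that every later transition $X_\beta\to X_{\alpha_0}$ is nonzero (else $1_\mathbf{X}=0$), and composing an injective envelope $X_{\alpha_0}\hookrightarrow J$ with the structural map $\mathbf{X}\to X_{\alpha_0}$ produces a nonzero pro-morphism $\mathbf{X}\to J$. Applied to $K$, this gives an injective $J$ and a nonzero $\psi:K\to J$ in $\mathbf{Pro}(k)$. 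Using the standard cofinality and re-indexing machinery for $\mathbf{Pro}(k)$ (cf.\ Theorem~\ref{Th-description-Pro(C)}), one may represent the monomorphism $K\hookrightarrow A'$ by a level monomorphism $(K_\alpha\hookrightarrow A'_\alpha)_\alpha$ and $\psi$ by some $\psi_{\alpha_0}:K_{\alpha_0}\to J$; ordinary injectivity of $J$ in $\mathbf{Mod}(k)$ extends $\psi_{\alpha_0}$ along $K_{\alpha_0}\hookrightarrow A'_{\alpha_0}$ to a pro-morphism $\tilde\psi:A'\to J$ with $\tilde\psi|_K=\psi$. But by the previous paragraph $\tilde\psi$ itself factors through $f$, say $\tilde\psi=\tilde\phi\circ f$, so $\psi=\tilde\psi|_K=\tilde\phi\circ f|_K=0$, a contradiction; hence $K=0$.

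The main obstacle is the final step, where the abstract pro-monomorphism $K\hookrightarrow A'$ must be represented by a level monomorphism: this calls on the reindexing machinery of pro-categories, and without it the cogeneration argument alone is not enough, because an injective $k$-module need not be injective as an object of $\mathbf{Pro}(k)$ and the extension of $\psi$ across an a priori non-level pro-monomorphism cannot be taken for granted.
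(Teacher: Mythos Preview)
The paper does not prove this proposition itself; it simply cites Sugiki's Theorem 2.1.12. Your argument therefore cannot be compared to the paper's, but it is a correct and self-contained proof. The key idea---that the adjunction of Theorem~\ref{Th-tensor-product} turns the quasi-noetherian hypothesis into exactness of $Hom_{\mathbf{Pro}(k)}(\mathbf{P}\otimes_k-,J)$ for every injective $J$, after which one concludes by cogeneration---is exactly the natural route.

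Your closing worry about representing $K\hookrightarrow A'$ by a level monomorphism is legitimate but easily discharged, and you should just do it rather than flag it as an obstacle. After reindexing so that $K\hookrightarrow A'$ is a level morphism $(g_\alpha:K_\alpha\to A'_\alpha)_\alpha$, the level kernels $(\ker g_\alpha)_\alpha$ compute the pro-kernel (see the proof of Proposition~\ref{Prop-pro-complex-pro}), which is zero. Replace each $K_\alpha$ by $Im(g_\alpha)\subseteq A'_\alpha$: the comparison $(K_\alpha)\to(Im(g_\alpha))$ is a level epimorphism with pro-zero kernel, hence an isomorphism in the abelian category $\mathbf{Pro}(k)$. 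In this new presentation every $K_\alpha\hookrightarrow A'_\alpha$ is an honest monomorphism in $\mathbf{Mod}(k)$, and the extension of $\psi_{\alpha_0}$ by injectivity of $J$ goes through exactly as written. With this one sentence inserted, the proof is complete.
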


\begin{proof}
See \cite{Sugiki-2001-33}, Theorem 2.1.12.
\end{proof}

\subsection{$\mathbf{Tor}$ for pro-modules}

We define the torsion functors $\mathbf{Tor}_{\ast }^{k}:\mathbf{Pro}\left(
k\right) \rightarrow \mathbf{Pro}\left( k\right) $ as the left derived
functors of the functor 
\begin{equation*}
\otimes _{k}:\mathbf{Pro}\left( k\right) \times \mathbf{Mod}\left( k\right)
\longrightarrow \mathbf{Pro}\left( k\right)
\end{equation*}%
with respect to the \textbf{second} variable. Later, in Proposition \ref%
{Prop-quasi-projective-resolution}, we will show that these functors can be
equally defined by using the \textbf{first} variable (provided $k$ is
quasi-noetherian).

\begin{definition}
\label{Def-Tor-Pro(k)}Let $\mathbf{M}\in \mathbf{Pro}\left( k\right) $, $%
G\in \mathbf{Mod}\left( k\right) $, and let 
\begin{equation*}
0\longleftarrow G\longleftarrow P_{0}\longleftarrow P_{1}\longleftarrow
P_{2}\longleftarrow \mathbf{...}
\end{equation*}%
be a projective resolution of $G$. Define 
\begin{equation*}
\mathbf{Tor}_{n}^{k}\left( \mathbf{M},G\right) :=\mathbf{H}_{n}\left( 
\mathbf{M}\otimes _{k}P_{\ast }\right) ,n\geq 0,
\end{equation*}%
to be the pro-homology of $\left( \mathbf{M}\otimes _{k}P_{\ast }\right) \in 
\mathbf{CHAIN}\left( \mathbf{Pro}\left( k\right) \right) $.
\end{definition}

\begin{remark}
Using the standard homological techniques, one can easily show that $\mathbf{%
Tor}_{n}^{k}$ are well defined bi-additive (even $k$-bilinear) functors from 
$\mathbf{Pro}\left( k\right) \times \mathbf{Mod}\left( k\right) $ to $%
\mathbf{Pro}\left( k\right) $. Moreover, since $\otimes _{k}$ is right
exact, $\mathbf{Tor}_{0}^{k}$ is naturally isomorphic to $\otimes _{k}$.
\end{remark}

\begin{proposition}
\label{Prop-first-long-exact-sequence}(The first long exact sequence) Let $%
G\in \mathbf{Mod}\left( k\right) $, and let%
\begin{equation*}
0\mathbf{\longrightarrow M\longrightarrow N\longrightarrow K}\longrightarrow
0
\end{equation*}%
be a short exact sequence of pro-modules. Then there exists a natural long
exact sequence of pro-modules%
\begin{eqnarray*}
0 &\longleftarrow &\mathbf{K}\otimes _{k}G\longleftarrow \mathbf{N}\otimes
_{k}G\longleftarrow \mathbf{M}\otimes _{k}G\longleftarrow \mathbf{Tor}%
_{1}^{k}\left( \mathbf{K},G\right) \longleftarrow ... \\
... &\longleftarrow &\mathbf{Tor}_{n}^{k}\left( \mathbf{K},G\right)
\longleftarrow \mathbf{Tor}_{n}^{k}\left( \mathbf{N},G\right) \longleftarrow 
\mathbf{Tor}_{n}^{k}\left( \mathbf{M},G\right) \longleftarrow \mathbf{Tor}%
_{n+1}^{k}\left( \mathbf{K},G\right) \longleftarrow ...
\end{eqnarray*}
\end{proposition}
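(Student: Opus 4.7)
The plan is to imitate the classical construction of the Tor long exact sequence, carried out now in the abelian category $\mathbf{Pro}(k)$. First, I fix any projective resolution $P_{\ast }\to G$ in $\mathbf{Mod}(k)$. Since by definition $\mathbf{Tor}_{n}^{k}(?,G)$ is the pro-homology of $?\otimes _{k}P_{\ast }$, I want to produce a short exact sequence of chain pro-complexes
\begin{equation*}
0\longrightarrow \mathbf{M}\otimes _{k}P_{\ast }\longrightarrow \mathbf{N}\otimes _{k}P_{\ast }\longrightarrow \mathbf{K}\otimes _{k}P_{\ast }\longrightarrow 0
\end{equation*}
and then read off the long exact sequence in homology.

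For the exactness of this sequence in each chain degree, I would invoke Theorem \ref{Th-tensor-product}(\ref{Th-tensor-product-exact}): because each $P_{n}$ is a projective $k$-module, the functor $?\otimes _{k}P_{n}:\mathbf{Pro}(k)\rightarrow \mathbf{Pro}(k)$ is exact. Applying it to $0\to \mathbf{M}\to \mathbf{N}\to \mathbf{K}\to 0$ yields a short exact sequence of pro-modules in every degree, which then assembles into the displayed short exact sequence of chain pro-complexes. The level-wise description from Remark \ref{Rem-tensor-product-complexes} together with Proposition \ref{Prop-pro-complex-pro} confirms that pro-homology $\mathbf{H}_{n}$ of $\mathbf{X}\otimes _{k}P_{\ast }$ coincides with the homology $\mathbf{H}_{n}^{\#}$ taken inside the abelian category of complexes in $\mathbf{Pro}(k)$, so the two candidate interpretations of $\mathbf{Tor}_{n}^{k}(\mathbf{X},G)$ match up.

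From here, the derivation is purely formal: $\mathbf{Pro}(k)$ is abelian, so the Snake Lemma applied to a short exact sequence of chain complexes produces connecting morphisms $\partial _{n}:\mathbf{H}_{n}(\mathbf{K}\otimes _{k}P_{\ast })\rightarrow \mathbf{H}_{n-1}(\mathbf{M}\otimes _{k}P_{\ast })$ and the long exact sequence in homology. Translating via $\mathbf{H}_{n}(\mathbf{X}\otimes _{k}P_{\ast })=\mathbf{Tor}_{n}^{k}(\mathbf{X},G)$ for $\mathbf{X}\in \{\mathbf{M},\mathbf{N},\mathbf{K}\}$, and using $\mathbf{Tor}_{0}^{k}(?,G)\cong ?\otimes _{k}G$, gives precisely the stated sequence.

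Naturality in the short exact sequence follows because a morphism of short exact sequences of pro-modules, tensored with the fixed resolution $P_{\ast }$, yields a morphism of the associated short exact sequences of pro-complexes, which in turn induces a morphism of the long exact homology sequences. Independence of the particular choice of $P_{\ast }$ is the usual comparison-of-resolutions argument using chain homotopies, which also underlies the fact that $\mathbf{Tor}_{\ast }^{k}$ is well defined. I do not anticipate any genuine obstacle: the entire content beyond general homological algebra is Theorem \ref{Th-tensor-product}(\ref{Th-tensor-product-exact}), and the only point worth noting is that the resolution lives in $\mathbf{Mod}(k)$ (the second variable) while exactness is applied to a short exact sequence in $\mathbf{Pro}(k)$ — which is exactly why exactness of $?\otimes _{k}P_{n}$ on pro-modules, and not merely on modules, is the crucial input.
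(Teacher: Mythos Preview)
Your proof is correct and follows essentially the same approach as the paper: take a projective resolution $P_{\ast}\to G$, use Theorem~\ref{Th-tensor-product}(\ref{Th-tensor-product-exact}) to obtain the short exact sequence of complexes $0\to \mathbf{M}\otimes_k P_{\ast}\to \mathbf{N}\otimes_k P_{\ast}\to \mathbf{K}\otimes_k P_{\ast}\to 0$, and read off the long exact sequence of pro-homologies. The paper's proof is terser, but your added remarks on naturality and the identification via Proposition~\ref{Prop-pro-complex-pro} are sound elaborations of the same argument.
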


\begin{proof}
Let%
\begin{equation*}
0\longleftarrow G\longleftarrow P_{0}\longleftarrow P_{1}\longleftarrow
P_{2}\longleftarrow \mathbf{...}
\end{equation*}%
be a projective resolution of $G$. Due to Theorem \ref{Th-tensor-product} (%
\ref{Th-tensor-product-exact}), there is a short exact sequence of complexes
of pro-modules%
\begin{equation*}
0\mathbf{\longrightarrow M\otimes }_{k}P_{\ast }\mathbf{\longrightarrow N}%
\otimes _{k}P_{\ast }\mathbf{\longrightarrow K\otimes }_{k}P_{\ast
}\longrightarrow 0.
\end{equation*}%
The corresponding long exact sequence of pro-homologies is as desired.
\end{proof}

\begin{proposition}
\label{Prop-pairing-pro-complexes}Let $\mathbf{C}_{\ast }\in \mathbf{Pro}%
\left( \mathbf{CHAIN}\left( k\right) \right) $ be a pro-complex of $k$%
-modules, and let $G\in \mathbf{Mod}\left( k\right) $. There is a natural on 
$\mathbf{C}_{\ast }$ and $G$ pairing%
\begin{equation*}
\mathbf{H}_{\ast }\left( \mathbf{C}_{\ast }\right) \otimes
_{k}G\longrightarrow \mathbf{H}_{\ast }\left( \mathbf{C}_{\ast }\otimes
_{k}G\right) .
\end{equation*}
\end{proposition}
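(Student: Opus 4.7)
The plan is to use the tensor-Hom adjunction established in Theorem \ref{Th-tensor-product}(\ref{Th-tensor-product-representable}). By that adjunction, specifying a pro-morphism
\[
\mathbf{H}_{n}(\mathbf{C}_{\ast })\otimes _{k}G\longrightarrow \mathbf{H}_{n}(\mathbf{C}_{\ast }\otimes _{k}G)
\]
is the same as specifying a $k$-module homomorphism
\[
\alpha _{G}\colon G\longrightarrow Hom_{\mathbf{Pro}(k)}\bigl(\mathbf{H}_{n}(\mathbf{C}_{\ast }),\, \mathbf{H}_{n}(\mathbf{C}_{\ast }\otimes _{k}G)\bigr).
\]
I would define $\alpha _{G}$ as follows. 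For $g\in G$, let $g_{\sharp }\colon k\to G$ be the $k$-linear map $1\mapsto g$. Applying the bifunctor $\mathbf{C}_{\ast }\otimes _{k}(-)$ (Theorem \ref{Th-tensor-product}(\ref{Th-tensor-product-functor})) and composing with the canonical isomorphism $\mathbf{C}_{\ast }\simeq \mathbf{C}_{\ast }\otimes _{k}k$ yields a chain pro-complex morphism $\mathrm{id}\otimes g_{\sharp }\colon \mathbf{C}_{\ast }\to \mathbf{C}_{\ast }\otimes _{k}G$. Taking pro-homology, I set $\alpha _{G}(g):=\mathbf{H}_{n}(\mathrm{id}\otimes g_{\sharp })$.

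The only content requiring verification is that $\alpha _{G}$ is $k$-linear. Additivity follows from $(g_{1}+g_{2})_{\sharp }=(g_{1})_{\sharp }+(g_{2})_{\sharp }$ together with the $k$-bilinearity of $\otimes _{k}$ and the additivity of $\mathbf{H}_{n}$. For scalar multiplication, $(ag)_{\sharp }=g_{\sharp }\circ m_{a}$ (with $m_{a}\colon k\to k$ denoting multiplication by $a$), and under the identification $\mathbf{C}_{\ast }\otimes _{k}k\simeq \mathbf{C}_{\ast }$ the map $\mathrm{id}\otimes m_{a}$ becomes multiplication by $a$ on $\mathbf{C}_{\ast }$; hence $\alpha _{G}(ag)=a\cdot \alpha _{G}(g)$. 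Naturality in both $\mathbf{C}_{\ast }$ and $G$ is inherited automatically from the bifunctoriality of $\otimes _{k}$, the functoriality of $\mathbf{H}_{n}$, and the naturality of the adjunction from Theorem \ref{Th-tensor-product}.

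I do not foresee any serious obstacle: the statement is essentially a formal consequence of the universal property of the pro-module tensor product, and the only concrete verification is the $k$-linearity of $\alpha _{G}$, dispatched above. A more pedestrian alternative would be to check first that both sides coincide when $G$ is free (via Proposition \ref{Prop-coproduct-complexes} together with the fact, used in the proof of Theorem \ref{Th-tensor-product}, that tensoring with $k^{X}$ produces a coproduct indexed by $X$), then pick a presentation $k^{Y}\to k^{X}\to G\to 0$ and invoke right-exactness of $\mathbf{H}_{n}(\mathbf{C}_{\ast })\otimes _{k}(-)$ to extend the pairing to arbitrary $G$; but the adjunction route above is considerably shorter.
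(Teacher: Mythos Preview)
Your proposal is correct and follows essentially the same route as the paper: both arguments produce, for each $g\in G$, a chain map $\mathbf{C}_{\ast}\to \mathbf{C}_{\ast}\otimes_{k}G$, take pro-homology, and then invoke the tensor--Hom adjunction of Theorem~\ref{Th-tensor-product} to obtain the pairing. The only cosmetic difference is that the paper describes that chain map as $\varphi_{n}(g)$, the image of the identity under the adjunction $Hom_{\mathbf{Pro}(k)}(\mathbf{C}_{n}\otimes_{k}G,\mathbf{C}_{n}\otimes_{k}G)\simeq Hom_{\mathbf{Mod}(k)}(G,Hom_{\mathbf{Pro}(k)}(\mathbf{C}_{n},\mathbf{C}_{n}\otimes_{k}G))$, whereas you describe it as $\mathrm{id}\otimes g_{\sharp}$ via functoriality of $\otimes_{k}$; these are the same morphism.
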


\begin{proof}
For each $a\in G$ we will define morphisms%
\begin{equation*}
\overline{a}_{\ast }:\mathbf{H}_{\ast }\left( \mathbf{C}_{\ast }\right)
\longrightarrow \mathbf{H}_{\ast }\left( \mathbf{C}_{\ast }\otimes
_{k}G\right)
\end{equation*}%
which satisfy the condition $\left( \overline{sa+tb}\right) _{\ast }=%
\overline{a}_{\ast }+\overline{b}_{\ast }$, $a$, $b\in G$, $s$, $t\in k$.
Since, due to Theorem \ref{Th-tensor-product},%
\begin{equation*}
Hom_{\mathbf{Mod}\left( k\right) }\left( G,Hom_{\mathbf{Pro}\left( k\right)
}\left( \mathbf{C}_{\ast },\mathbf{C}_{\ast }\otimes _{k}G\right) \right) 
\simeq%
Hom_{\mathbf{Pro}\left( k\right) }\left( \mathbf{C}_{\ast }\otimes _{k}G,%
\mathbf{C}_{\ast }\otimes _{k}G\right) ,
\end{equation*}%
let 
\begin{equation*}
\varphi _{n}:G\longrightarrow Hom_{\mathbf{Pro}\left( k\right) }\left( 
\mathbf{C}_{n},\mathbf{C}_{n}\otimes _{k}G\right)
\end{equation*}%
be a family of morphisms corresponding to $1_{n}\in Hom_{\mathbf{Pro}\left(
k\right) }\left( \mathbf{C}_{n}\otimes _{k}G,\mathbf{C}_{n}\otimes
_{k}G\right) $. The morphisms 
\begin{equation*}
\varphi _{n}\left( a\right) :\mathbf{C}_{n}\longrightarrow \mathbf{C}%
_{n}\otimes _{k}G
\end{equation*}%
define a chain mapping inducing the desired morphisms $\overline{a}_{n}:%
\mathbf{H}_{n}\left( \mathbf{C}_{\ast }\right) \rightarrow \mathbf{H}%
_{n}\left( \mathbf{C}_{\ast }\otimes _{k}G\right) $.
\end{proof}

\textbf{From now on, we assume that }$k$\textbf{\ is quasi-noetherian}.
Proposition \ref{Prop-quasi-projective-resolution} below shows that
quasi-projective resolutions can be used to define the torsion functors.

\begin{proposition}
\label{Prop-quasi-projective-resolution}Let $\mathbf{M}\in \mathbf{Pro}%
\left( k\right) $, $G\in \mathbf{Mod}\left( k\right) $, and let 
\begin{equation*}
0\longleftarrow \mathbf{M}\longleftarrow \mathbf{P}_{0}\longleftarrow 
\mathbf{P}_{1}\longleftarrow \mathbf{P}_{2}\longleftarrow \mathbf{...}
\end{equation*}%
be an exact sequence of pro-modules where all $\mathbf{P}_{i}$ are
quasi-projective. Then the homology groups of the complex $\mathbf{P}_{\ast
}\otimes _{k}G$ are naturally isomorphic to the torsion pro-modules: 
\begin{equation*}
\mathbf{H}_{n}\left( \mathbf{P}_{\ast }\otimes _{k}G\right) 
\simeq%
\mathbf{Tor}_{n}^{k}\left( \mathbf{M},G\right) ,n\geq 0.
\end{equation*}
\end{proposition}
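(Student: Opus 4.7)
The plan is to run the standard balanced-Tor argument, adapted to the pro-module setting. Choose an ordinary projective resolution $Q_\ast \to G$ in $\mathbf{Mod}(k)$ and form the first-quadrant double complex $D_{\ast\ast} := \mathbf{P}_\ast \otimes_k Q_\ast$ in $\mathbf{Pro}(k)$, together with the associated total complex $\mathrm{Tot}(D)_\ast$. The idea is to show that both the augmented rows and the augmented columns are exact, so that the two natural augmentations
\begin{equation*}
\mathrm{Tot}(D)_\ast \longrightarrow \mathbf{P}_\ast \otimes_k G
\quad\text{and}\quad
\mathrm{Tot}(D)_\ast \longrightarrow \mathbf{M} \otimes_k Q_\ast
\end{equation*}
are both quasi-isomorphisms of complexes in $\mathbf{Pro}(k)$, which (by definition of $\mathbf{Tor}_n^k$) gives the desired isomorphism $\mathbf{H}_n(\mathbf{P}_\ast \otimes_k G) \simeq \mathbf{Tor}_n^k(\mathbf{M},G)$.

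The two exactness inputs are already in place. On the one hand, each $Q_j$ is a projective $k$-module, hence by Theorem \ref{Th-tensor-product}(\ref{Th-tensor-product-exact}) the functor $? \otimes_k Q_j : \mathbf{Pro}(k) \to \mathbf{Pro}(k)$ is exact, so applying it to the exact sequence $0 \leftarrow \mathbf{M} \leftarrow \mathbf{P}_\ast$ shows that each row of $D_{\ast\ast}$, augmented by $\mathbf{M} \otimes_k Q_j$, is exact. On the other hand, each $\mathbf{P}_p$ is quasi-projective and $k$ is quasi-noetherian, so by Proposition \ref{Prop-quasi-projectives-are-flat} the functor $\mathbf{P}_p \otimes_k ? : \mathbf{Mod}(k) \to \mathbf{Mod}(k)$ is exact; tensoring componentwise (i.e.\ level-wise in the index category of $\mathbf{P}_p$, cf.\ Remark \ref{Rem-tensor-product-complexes}) then extends this to show that each column of $D_{\ast\ast}$, augmented by $\mathbf{P}_p \otimes_k G$, is exact in $\mathbf{Pro}(k)$.

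To pass from row/column exactness to the quasi-isomorphism claim, I will use the standard first-quadrant double-complex comparison: filtering $\mathrm{Tot}(D)$ by rows (resp.\ columns) produces a spectral sequence whose $E^2$-page collapses onto $\mathbf{H}_\ast(\mathbf{M} \otimes_k Q_\ast) = \mathbf{Tor}_\ast^k(\mathbf{M},G)$ (resp.\ onto $\mathbf{H}_\ast(\mathbf{P}_\ast \otimes_k G)$). Since $\mathbf{Pro}(k)$ is abelian (Theorem \ref{Th-description-Pro(C)} and Proposition \ref{Prop-pro-complex-pro}) and the double complex is first-quadrant, both spectral sequences are biregular and converge strongly to $\mathbf{H}_\ast(\mathrm{Tot}(D))$. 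Equivalently, one can invoke the acyclic-assembly lemma directly in $\mathbf{Pro}(k)$. Naturality of the identification in $\mathbf{M}$ and $G$ follows from naturality of the double complex construction and of its augmentations.

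The main obstacle I expect is not the formal argument but verifying that the spectral sequence / acyclic-assembly machinery works verbatim in $\mathbf{Pro}(k)$; in particular, that computing homology of rows and columns commutes with the total-complex functor as it does in $\mathbf{Mod}(k)$. This is not automatic because $\mathbf{Pro}(k)$ lacks enough projectives (Remark \ref{Rem-not-enough-projectives}), but it is purely an abelian-category statement about first-quadrant double complexes and follows from the explicit description of (co)kernels and images given in the proof of Proposition \ref{Prop-pro-complex-pro}, so only a careful but essentially bookkeeping verification is required.
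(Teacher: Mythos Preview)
Your argument is correct and is the classical balanced-$\mathrm{Tor}$ double-complex proof; the exactness inputs (Theorem~\ref{Th-tensor-product}(\ref{Th-tensor-product-exact}) for the rows and Proposition~\ref{Prop-quasi-projectives-are-flat} for the columns) are precisely what is needed, and the first-quadrant spectral-sequence machinery goes through verbatim in any abelian category, which $\mathbf{Pro}(k)$ is. Your caveat about $\mathbf{Pro}(k)$ lacking enough projectives is a red herring here: neither the construction of the double-complex spectral sequences nor the acyclic-assembly lemma requires projectives.

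The paper takes a different route: rather than the double complex, it proceeds by \emph{dimension shifting}. The case $n=0$ is immediate from right exactness of $\otimes_k$. For the inductive step one sets $\mathbf{N}=\ker(\mathbf{P}_0\to\mathbf{M})$, applies the first long exact sequence (Proposition~\ref{Prop-first-long-exact-sequence}) to $0\to\mathbf{N}\to\mathbf{P}_0\to\mathbf{M}\to 0$, and uses that $\mathbf{Tor}_n^k(\mathbf{P}_0,G)=0$ for $n\ge 1$ (a consequence of Proposition~\ref{Prop-quasi-projectives-are-flat}) to obtain $\mathbf{Tor}_{n+1}^k(\mathbf{M},G)\simeq\mathbf{Tor}_n^k(\mathbf{N},G)$; the shifted resolution $0\leftarrow\mathbf{N}\leftarrow\mathbf{P}_1\leftarrow\mathbf{P}_2\leftarrow\cdots$ and the induction hypothesis finish the argument. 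This is more elementary in that it avoids any spectral-sequence formalism and stays entirely within the long exact sequences already established, at the cost of being slightly less transparent about \emph{why} the two computations agree. Your approach, by contrast, exhibits the isomorphism as arising from two collapses of the same object $\mathbf{H}_\ast(\mathrm{Tot}(D))$, which makes naturality and the symmetry of the situation manifest.
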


\begin{proof}
Induction on $n$.

\textbf{Step} $n=0$: since $\otimes _{k}$ commutes with direct limits,%
\begin{eqnarray*}
&&\mathbf{H}_{0}\left( \mathbf{P}_{\ast }\otimes _{k}G\right) 
\simeq%
\mathbf{coker}\left( \mathbf{P}_{1}\otimes _{k}G\longrightarrow \mathbf{P}%
_{0}\otimes _{k}G\right) 
\simeq
\\
&&\mathbf{coker}\left( \mathbf{P}_{1}\longrightarrow \mathbf{P}_{0}\right)
\otimes _{k}G%
\simeq%
\mathbf{M}\otimes _{k}G%
\simeq%
\mathbf{Tor}_{0}^{k}\left( \mathbf{M},G\right) .
\end{eqnarray*}

\textbf{Step} $n\Rightarrow n+1$: let $\mathbf{N}=\ker \left( \mathbf{M}%
\longleftarrow \mathbf{P}_{0}\right) $. The short exact sequence%
\begin{equation*}
0\longrightarrow \mathbf{N}\longrightarrow \mathbf{P}_{0}\longrightarrow 
\mathbf{M}\longrightarrow 0
\end{equation*}%
induces, due to Proposition \ref{Prop-first-long-exact-sequence}, a long
exact sequence%
\begin{equation*}
...\longleftarrow 0=\mathbf{Tor}_{n}^{k}\left( \mathbf{P}_{0},G\right)
\longleftarrow \mathbf{Tor}_{n}^{k}\left( \mathbf{N},G\right) \longleftarrow 
\mathbf{Tor}_{n+1}^{k}\left( \mathbf{M},G\right) \longleftarrow 0=\mathbf{Tor%
}_{n}^{k}\left( \mathbf{P}_{0},G\right) \longleftarrow ...,
\end{equation*}%
which, in turn, implies an isomorphism $\mathbf{Tor}_{n}^{k}\left( \mathbf{N}%
,G\right) 
\simeq%
\mathbf{Tor}_{n+1}^{k}\left( \mathbf{M},G\right) $. There is an evident
resolution for $\mathbf{N}$:%
\begin{equation*}
0\longleftarrow \mathbf{N}\longleftarrow \mathbf{P}_{1}\longleftarrow 
\mathbf{P}_{2}\longleftarrow \mathbf{P}_{3}\longleftarrow \mathbf{...}
\end{equation*}%
By the induction hypothesis,%
\begin{equation*}
\mathbf{Tor}_{n}^{k}\left( \mathbf{N},G\right) 
\simeq%
\mathbf{H}_{n}\left( \mathbf{P}_{\ast +1}\otimes _{k}G\right) 
\simeq%
\mathbf{H}_{n+1}\left( \mathbf{P}_{\ast }\otimes _{k}G\right) .
\end{equation*}%
Finally, $\mathbf{Tor}_{n+1}^{k}\left( \mathbf{M},G\right) 
\simeq%
\mathbf{Tor}_{n}^{k}\left( \mathbf{N},G\right) 
\simeq%
\mathbf{H}_{n+1}\left( \mathbf{P}_{\ast }\otimes _{k}G\right) $, as desired.
\end{proof}

\begin{proposition}
\label{Prop-second-long-exact-sequence}(The second long exact sequence) Let $%
\mathbf{M}\in \mathbf{Pro}\left( k\right) $, and let%
\begin{equation*}
0\longrightarrow A\longrightarrow B\longrightarrow C\longrightarrow 0
\end{equation*}%
be a short exact sequence of $k$-modules. Then there exists a natural long
exact sequence of pro-modules%
\begin{eqnarray*}
0 &\longleftarrow &\mathbf{M}\otimes _{k}C\longleftarrow \mathbf{M}\otimes
_{k}B\longleftarrow \mathbf{M}\otimes _{k}A\longleftarrow \mathbf{Tor}%
_{1}^{k}\left( \mathbf{M},C\right) \longleftarrow ... \\
... &\longleftarrow &\mathbf{Tor}_{n}^{k}\left( \mathbf{M},C\right)
\longleftarrow \mathbf{Tor}_{n}^{k}\left( \mathbf{B},B\right) \longleftarrow 
\mathbf{Tor}_{n}^{k}\left( \mathbf{M},A\right) \longleftarrow \mathbf{Tor}%
_{n+1}^{k}\left( \mathbf{M},C\right) \longleftarrow ...
\end{eqnarray*}
\end{proposition}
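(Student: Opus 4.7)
The plan is to mirror the proof of the first long exact sequence (Proposition \ref{Prop-first-long-exact-sequence}), but with the roles of the two variables interchanged: I resolve the fixed pro-module $\mathbf{M}$ instead of the module in which the short exact sequence lives. What makes this interchange possible is the standing hypothesis that $k$ is quasi-noetherian, which provides simultaneously the existence of quasi-projective resolutions of pro-modules (Proposition \ref{Prop-enough-quasi-projectives}), the flatness of quasi-projectives (Proposition \ref{Prop-quasi-projectives-are-flat}), and the identification of $\mathbf{Tor}$ via such resolutions (Proposition \ref{Prop-quasi-projective-resolution}).

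First I would use Proposition \ref{Prop-enough-quasi-projectives} iteratively (taking kernels in the abelian category $\mathbf{Pro}(k)$ and covering them by quasi-projectives) to obtain an exact resolution
\begin{equation*}
0 \longleftarrow \mathbf{M} \longleftarrow \mathbf{P}_{0} \longleftarrow \mathbf{P}_{1} \longleftarrow \mathbf{P}_{2} \longleftarrow \cdots
\end{equation*}
with each $\mathbf{P}_{n}$ quasi-projective. Because each $\mathbf{P}_{n}$ is flat by Proposition \ref{Prop-quasi-projectives-are-flat}, tensoring $0 \to A \to B \to C \to 0$ degreewise with $\mathbf{P}_{\ast}$ yields a short exact sequence of chain complexes in $\mathbf{Pro}(k)$,
\begin{equation*}
0 \longrightarrow \mathbf{P}_{\ast} \otimes_{k} A \longrightarrow \mathbf{P}_{\ast} \otimes_{k} B \longrightarrow \mathbf{P}_{\ast} \otimes_{k} C \longrightarrow 0.
\end{equation*}
Since $\mathbf{Pro}(k)$ is abelian, the standard zig-zag (snake) lemma produces a long exact sequence of pro-homology groups $\mathbf{H}_{n}(\mathbf{P}_{\ast} \otimes_{k} ?)$. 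Applying Proposition \ref{Prop-quasi-projective-resolution} to translate $\mathbf{H}_{n}(\mathbf{P}_{\ast} \otimes_{k} G) \simeq \mathbf{Tor}_{n}^{k}(\mathbf{M}, G)$ in each of the three slots $G \in \{A, B, C\}$ converts this into the asserted long exact sequence. Naturality in $A \to B \to C$ is built into the construction, since a single resolution $\mathbf{P}_{\ast}$ is used throughout; naturality in $\mathbf{M}$ follows from the standard comparison theorem for resolutions, extended routinely from projective to quasi-projective resolutions.

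The main point requiring care is the application of Proposition \ref{Prop-quasi-projectives-are-flat}: one must confirm that exactness of $\mathbf{P} \otimes_{k} ?$ in the sense used there (i.e.\ with values taken in $\mathbf{Pro}(k)$, recognizing the apparent typographical slip in the statement's target category) really does deliver a short exact sequence in $\mathbf{Pro}(\mathbf{CHAIN}(k))$. Unpacked through the levelwise description of kernels, cokernels, and images in $\mathbf{Pro}(k)$ given in the proof of Proposition \ref{Prop-pro-complex-pro}, this amounts to verifying that the degreewise short exact sequences assemble into a genuine short exact sequence of chain pro-complexes, which is a formal check. Once this is in hand, the remainder is a purely categorical application of the zig-zag lemma in an abelian category and requires no further input.
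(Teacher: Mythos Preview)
Your proof is correct and follows essentially the same approach as the paper: take a quasi-projective resolution of $\mathbf{M}$, use Proposition \ref{Prop-quasi-projectives-are-flat} to obtain a short exact sequence of complexes $0\to \mathbf{P}_{\ast}\otimes_{k}A\to \mathbf{P}_{\ast}\otimes_{k}B\to \mathbf{P}_{\ast}\otimes_{k}C\to 0$, and then identify the resulting long exact homology sequence with the $\mathbf{Tor}$ sequence via Proposition \ref{Prop-quasi-projective-resolution}. Your additional remarks on naturality and on the care needed in reading Proposition \ref{Prop-quasi-projectives-are-flat} go slightly beyond what the paper spells out, but the underlying argument is the same.
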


\begin{proof}
Let%
\begin{equation*}
0\longleftarrow \mathbf{M}\longleftarrow \mathbf{P}_{0}\longleftarrow 
\mathbf{P}_{1}\longleftarrow \mathbf{P}_{2}\longleftarrow \mathbf{...}
\end{equation*}%
be a quasi-projective resolution of $\mathbf{M}$. Due to Proposition \ref%
{Prop-quasi-projectives-are-flat}, there is a short exact sequence of
complexes of pro-modules%
\begin{equation*}
0\mathbf{\longrightarrow P}_{\ast }\mathbf{\otimes }_{k}A\mathbf{%
\longrightarrow P}_{\ast }\otimes _{k}B\mathbf{\longrightarrow P}_{\ast }%
\mathbf{\otimes }_{k}C\longrightarrow 0.
\end{equation*}%
Because of Proposition \ref{Prop-quasi-projective-resolution}, the
corresponding long exact sequence of pro-homologies is isomorphic to the
desired sequence.
\end{proof}

\begin{theorem}
\label{Th-UCF-pro-complexes}Let $\mathbf{P}_{\ast }\in \mathbf{Pro}\left( 
\mathbf{CHAIN}\left( \mathbb{Z}\right) \right) $ be a pro-complex of abelian
groups, such that $\mathbf{P}_{n}$ is quasi-projective for all $n\in \mathbb{%
Z}$, and let $G\in \mathbf{Mod}\left( \mathbb{Z}\right) $. Then for any $%
n\in \mathbb{Z}$ the pairing 
\begin{equation*}
\mathbf{H}_{n}\left( \mathbf{P}_{\ast }\right) \otimes _{\mathbb{Z}%
}G\longrightarrow \mathbf{H}_{n}\left( \mathbf{P}_{\ast }\otimes _{\mathbb{Z}%
}G\right)
\end{equation*}%
is a monomorphism, with the cokernel naturally (on $\mathbf{P}_{\ast }$ and $%
G)$ isomorphic to $\mathbf{Tor}_{1}^{\mathbb{Z}}\left( \mathbf{H}%
_{n-1}\left( \mathbf{P}_{\ast }\right) ,G\right) $. In other words, there
exist natural on $\mathbf{P}_{\ast }$ and $G$ short exact sequences ($n\in 
\mathbb{Z}$)%
\begin{equation*}
0\longrightarrow \mathbf{H}_{n}\left( \mathbf{P}_{\ast }\right) \otimes _{%
\mathbb{Z}}G\longrightarrow \mathbf{H}_{n}\left( \mathbf{P}_{\ast }\otimes _{%
\mathbb{Z}}G\right) \longrightarrow \mathbf{Tor}_{1}^{\mathbb{Z}}\left( 
\mathbf{H}_{n-1}\left( \mathbf{P}_{\ast }\right) ,G\right) \longrightarrow 0.
\end{equation*}
\end{theorem}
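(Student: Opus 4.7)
The plan is to mimic the classical chain-level proof of UCF, substituting the splitting trick available for complexes of free abelian groups with the $\mathbf{Tor}$ long exact sequence of Proposition \ref{Prop-first-long-exact-sequence}. Regard $\mathbf{P}_\ast$ as a chain complex of pro-modules via Proposition \ref{Prop-pro-complex-pro}, and set $\mathbf{Z}_n = \ker(d_n)$, $\mathbf{B}_n = \operatorname{im}(d_{n+1})$ in $\mathbf{Pro}(\mathbb{Z})$. The crucial preliminary is that $\mathbf{Z}_n$ and $\mathbf{B}_n$ are again quasi-projective. Both are sub-pro-modules of $\mathbf{P}_n$, which by Proposition \ref{Prop-quasi-projective} is representable by an inverse system of free abelian groups. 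Invoking the fact that any monomorphism in $\mathbf{Pro}(\mathbb{Z})$ can be realised by a level monomorphism with injective level maps (dual to the standard level-representation of epimorphisms in $\mathbf{Ind}(\mathbf{Mod}(\mathbb{Z})^{op})$), one exhibits $\mathbf{Z}_n$ as an inverse system of subgroups of free abelian groups; such subgroups are themselves free, so $\mathbf{Z}_n$ is quasi-projective, and analogously for $\mathbf{B}_n$. Both are then flat by Proposition \ref{Prop-quasi-projectives-are-flat}, and Proposition \ref{Prop-quasi-projective-resolution} applied to the length-zero resolution shows that all their higher $\mathbf{Tor}_\ast^{\mathbb{Z}}(-, G)$ vanish.

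Applying Proposition \ref{Prop-first-long-exact-sequence} to the two canonical short exact sequences
\begin{equation*}
0 \longrightarrow \mathbf{Z}_n \longrightarrow \mathbf{P}_n \longrightarrow \mathbf{B}_{n-1} \longrightarrow 0,
\qquad
0 \longrightarrow \mathbf{B}_n \longrightarrow \mathbf{Z}_n \longrightarrow \mathbf{H}_n(\mathbf{P}_\ast) \longrightarrow 0,
\end{equation*}
the vanishing of $\mathbf{Tor}_1^{\mathbb{Z}}(\mathbf{B}_{n-1},G)$ turns the first into
\begin{equation*}
0 \longrightarrow \mathbf{Z}_n \otimes_{\mathbb{Z}} G \longrightarrow \mathbf{P}_n \otimes_{\mathbb{Z}} G \longrightarrow \mathbf{B}_{n-1} \otimes_{\mathbb{Z}} G \longrightarrow 0,
\end{equation*}
and the vanishing of $\mathbf{Tor}_1^{\mathbb{Z}}(\mathbf{Z}_n,G)$ turns the second into
\begin{equation*}
0 \to \mathbf{Tor}_1^{\mathbb{Z}}(\mathbf{H}_n(\mathbf{P}_\ast),G) \to \mathbf{B}_n \otimes_{\mathbb{Z}} G \to \mathbf{Z}_n \otimes_{\mathbb{Z}} G \to \mathbf{H}_n(\mathbf{P}_\ast) \otimes_{\mathbb{Z}} G \to 0.
\end{equation*}

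A diagram chase then concludes the argument. From the first resulting sequence, $d_n \otimes G$ factors as $\mathbf{P}_n \otimes G \twoheadrightarrow \mathbf{B}_{n-1} \otimes G$ followed by the canonical $\mathbf{B}_{n-1} \otimes G \to \mathbf{Z}_{n-1} \otimes G \hookrightarrow \mathbf{P}_{n-1} \otimes G$, and by the second sequence applied at index $n-1$ this first arrow has kernel $\mathbf{Tor}_1^{\mathbb{Z}}(\mathbf{H}_{n-1}(\mathbf{P}_\ast),G)$; pulling back yields
\begin{equation*}
0 \longrightarrow \mathbf{Z}_n \otimes_{\mathbb{Z}} G \longrightarrow \ker(d_n \otimes G) \longrightarrow \mathbf{Tor}_1^{\mathbb{Z}}(\mathbf{H}_{n-1}(\mathbf{P}_\ast),G) \longrightarrow 0.
\end{equation*}
Meanwhile $\operatorname{im}(d_{n+1} \otimes G)$ is the image of $\mathbf{B}_n \otimes G$ inside $\mathbf{Z}_n \otimes G$, whose cokernel is $\mathbf{H}_n(\mathbf{P}_\ast) \otimes_{\mathbb{Z}} G$ by the second sequence above. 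Splicing produces the asserted short exact sequence; naturality is inherited from that of Proposition \ref{Prop-first-long-exact-sequence}, and the identification of the leftmost map with the pairing of Proposition \ref{Prop-pairing-pro-complexes} is obtained by tracing through both constructions. The main technical obstacle is the quasi-projectivity of $\mathbf{Z}_n$ and $\mathbf{B}_n$; should the required level-representation of monomorphisms not be conveniently available, an equivalent route is to prove $\mathbf{Tor}_1^{\mathbb{Z}}(\mathbf{Z}_n,\mathbb{Z}/m) = 0$ directly (the monomorphism $\mathbf{Z}_n \hookrightarrow \mathbf{P}_n$ induces a monomorphism on $m$-torsion into the vanishing $\mathbf{Tor}_1^{\mathbb{Z}}(\mathbf{P}_n,\mathbb{Z}/m)$) and to extend this vanishing to arbitrary $G$ by writing $G$ as a filtered colimit of its finitely generated subgroups and exploiting that $\otimes_{\mathbb{Z}}$ commutes with direct limits (Theorem \ref{Th-tensor-product}).
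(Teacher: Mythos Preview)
Your argument is correct but follows a different route from the paper's. The paper resolves the \emph{second} variable: it picks a two-term free resolution $0\to Q_1\to Q_0\to G\to 0$, tensors $\mathbf{P}_\ast$ with it (flatness of each $\mathbf{P}_n$ via Proposition~\ref{Prop-quasi-projectives-are-flat} gives a short exact sequence of complexes), observes that $?\otimes_{\mathbb{Z}}Q_i$ is exact so that $\mathbf{H}_n(\mathbf{P}_\ast\otimes Q_i)\simeq\mathbf{H}_n(\mathbf{P}_\ast)\otimes Q_i$, and reads off the UCF from the resulting long exact sequence. You instead resolve on the \emph{first}-variable side, decomposing $\mathbf{P}_\ast$ into cycles and boundaries and running two Tor long exact sequences. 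Both exploit that $\mathbb{Z}$ has global dimension~$1$: the paper through the length of the resolution of $G$, you through the hereditary property (subgroups of free abelian groups are free) needed to make $\mathbf{Z}_n$ and $\mathbf{B}_n$ quasi-projective. The paper's route is shorter because it sidesteps the level-representation issue entirely; your route is closer to the textbook UCF proof and makes the role of the $\mathbf{Tor}$ machinery of \S\ref{Sec-pro-modules} more visible.

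Two remarks on your write-up. First, the quasi-projectivity of $\mathbf{Z}_n$ and $\mathbf{B}_n$ deserves one extra line: after reindexing the mono $\mathbf{Z}_n\hookrightarrow\mathbf{P}_n$ to a level morphism into a system of free groups, replace the domain levelwise by its image; the resulting level map is injective, represents the same pro-morphism (its kernel is the zero pro-object), and exhibits $\mathbf{Z}_n$ as a system of subgroups of free groups. Second, your fallback route at the end is not safe as stated: extending $\mathbf{Tor}_1^{\mathbb{Z}}(\mathbf{Z}_n,\mathbb{Z}/m)=0$ to arbitrary $G$ by a filtered-colimit argument requires that $\mathbf{Tor}_1^{\mathbb{Z}}(\mathbf{Z}_n,-)$, hence kernels in $\mathbf{Pro}(\mathbb{Z})$, commute with filtered colimits, and $\mathbf{Pro}(\mathbb{Z})$ has exact cofiltered limits, not exact filtered colimits. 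Stick with the quasi-projectivity argument.
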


\begin{proof}
Let%
\begin{equation*}
0\longleftarrow G\longleftarrow Q_{0}\longleftarrow Q_{1}\longleftarrow 0
\end{equation*}%
be a free resolution for $G$. Due to Proposition \ref%
{Prop-quasi-projectives-are-flat}, there is a short exact sequence in $%
\mathbf{CHAIN}\left( \mathbf{Pro}\left( \mathbb{Z}\right) \right) $:%
\begin{equation*}
0\longrightarrow \mathbf{P}_{\ast }\otimes _{\mathbb{Z}}Q_{1}\longrightarrow 
\mathbf{P}_{\ast }\otimes _{\mathbb{Z}}Q_{0}\longrightarrow \mathbf{P}_{\ast
}\otimes _{\mathbb{Z}}G\longrightarrow 0.
\end{equation*}%
Since the functors $?\otimes _{\mathbb{Z}}Q_{0}$ and $?\otimes _{\mathbb{Z}%
}Q_{1}$ are exact, the induced long exact sequence of pro-homologies will
have the following form:%
\begin{eqnarray*}
... &\longrightarrow &\mathbf{H}_{n}\left( \mathbf{P}_{\ast }\right) \otimes
_{\mathbb{Z}}Q_{1}\longrightarrow \mathbf{H}_{n}\left( \mathbf{P}_{\ast
}\right) \otimes _{\mathbb{Z}}Q_{0}\longrightarrow \mathbf{H}_{n}\left( 
\mathbf{P}_{\ast }\otimes _{\mathbb{Z}}G\right) \longrightarrow \\
&\longrightarrow &\mathbf{H}_{n-1}\left( \mathbf{P}_{\ast }\right) \otimes _{%
\mathbb{Z}}Q_{1}\longrightarrow \mathbf{H}_{n-1}\left( \mathbf{P}_{\ast
}\right) \otimes _{\mathbb{Z}}Q_{0}\longrightarrow ....
\end{eqnarray*}%
The latter sequence splits into the desired pieces:%
\begin{eqnarray*}
0 &\longrightarrow &\mathbf{coker}\left( \mathbf{H}_{n}\left( \mathbf{P}%
_{\ast }\right) \otimes _{\mathbb{Z}}Q_{1}\longrightarrow \mathbf{H}%
_{n}\left( \mathbf{P}_{\ast }\right) \otimes _{\mathbb{Z}}Q_{0}\right) 
\simeq%
\mathbf{H}_{n}\left( \mathbf{P}_{\ast }\right) \otimes _{\mathbb{Z}%
}G\longrightarrow \mathbf{H}_{n}\left( \mathbf{P}_{\ast }\otimes _{\mathbb{Z}%
}G\right) \longrightarrow \\
&\longrightarrow &\ker \left( \mathbf{H}_{n-1}\left( \mathbf{P}_{\ast
}\right) \otimes _{\mathbb{Z}}Q_{1}\longrightarrow \mathbf{H}_{n-1}\left( 
\mathbf{P}_{\ast }\right) \otimes _{\mathbb{Z}}Q_{0}\right) 
\simeq%
\mathbf{Tor}_{1}^{\mathbb{Z}}\left( \mathbf{H}_{n-1}\left( \mathbf{P}_{\ast
}\right) ,G\right) \longrightarrow 0.
\end{eqnarray*}
\end{proof}

\section{\label{Sec-spectral}Spectral sequences}

\subsection{Towers}

Let%
\begin{equation*}
\mathbf{A}=\left( ...\longrightarrow A_{n}\overset{i=i_{n}}{\longrightarrow }%
A_{n-1}\longrightarrow ...\longrightarrow A_{1}\longrightarrow A_{0}\right)
\end{equation*}%
be a tower of abelian groups, and let%
\begin{equation*}
\mathbf{A}^{\left( r\right) }=\left( ...\longrightarrow A_{n}^{\left(
r\right) }\overset{p}{\longrightarrow }A_{n-1}^{\left( r\right)
}\longrightarrow ...\longrightarrow A_{1}^{\left( r\right) }\longrightarrow
A_{0}^{\left( r\right) }\right)
\end{equation*}%
where%
\begin{equation*}
A_{n}^{\left( r\right) }=i^{r}A_{n}.
\end{equation*}

\begin{remark}
We assume that $i_{n}:A_{n}\longrightarrow A_{n-1}$ is zero when $n\geq 0$,
hence $A_{n}^{\left( r\right) }=0$ when $r>n$.
\end{remark}

\begin{definition}
\label{Def-lim-1}Let $\mathbf{A=}\left( A_{n},i\right) $ be a tower. The
first derived inverse limit is defined as follows:%
\begin{equation*}
\underleftarrow{\lim }_{n}^{1}A_{n}=\mathbf{coker}\left(
\dprod\limits_{n\geq 0}A_{n}\overset{1-i}{\longrightarrow }%
\dprod\limits_{n\geq 0}A_{n}\right) .
\end{equation*}

\begin{remark}
It is clear that%
\begin{equation*}
\underleftarrow{\lim }_{n}A_{n}=\ker \left( \dprod\limits_{n\geq 0}A_{n}%
\overset{1-i}{\longrightarrow }\dprod\limits_{n\geq 0}A_{n}\right) .
\end{equation*}
\end{remark}
\end{definition}

\begin{theorem}
\label{Mittag-Leffler}(Mittag-Leffler)

\begin{enumerate}
\item \label{Mittag-Leffler-exact-sequence}For a short exact sequence of
towers%
\begin{equation*}
0\longrightarrow \left( A_{n}\right) \longrightarrow \left( B_{n}\right)
\longrightarrow \left( C_{n}\right) \longrightarrow 0
\end{equation*}%
there exist an 8 term exact sequence%
\begin{equation*}
0\longrightarrow \underleftarrow{\lim }_{n}~A_{n}\longrightarrow 
\underleftarrow{\lim }_{n}~B_{n}\longrightarrow \underleftarrow{\lim }%
_{n}~C_{n}\longrightarrow \underleftarrow{\lim }_{n}^{1}A_{n}\longrightarrow 
\underleftarrow{\lim }_{n}^{1}B_{n}\longrightarrow \underleftarrow{\lim }%
_{n}^{1}C_{n}\longrightarrow 0.
\end{equation*}

\item \label{Mittag-Leffler-epimorphisms}If $i_{n}$ are epimorphisms for all 
$n$, then $\underleftarrow{\lim }_{n}^{1}A_{n}=0$. The condition can be
substituted by a weaker \textquotedblleft Mittag-Leffler
condition\textquotedblright , but we do not need this generalization.

\item \label{Mittag-Leffler-numeration}For any $r$,%
\begin{equation*}
\underleftarrow{\lim }_{n}A_{n+r}%
\simeq%
\underleftarrow{\lim }_{n}A_{n},~\underleftarrow{\lim }_{n}^{1}A_{n+r}%
\simeq%
\underleftarrow{\lim }_{n}^{1}A_{n}.
\end{equation*}

\item \label{Mittag-Leffler-images}For any $r$,%
\begin{equation*}
\underleftarrow{\lim }_{n}A_{n}^{\left( r\right) }%
\simeq%
\underleftarrow{\lim }_{n}A_{n},~\underleftarrow{\lim }_{n}^{1}A_{n}^{\left(
r\right) }%
\simeq%
\underleftarrow{\lim }_{n}^{1}A_{n}.
\end{equation*}

\item \label{Mittag-Leffler-lim-lim}%
\begin{equation*}
\underleftarrow{\lim }_{n}~\underleftarrow{\lim }_{r}A_{n}^{\left( r\right) }%
\simeq%
\underleftarrow{\lim }_{n}A_{n}.
\end{equation*}

\item \label{Mittag-Leffler-lim-lim1}There is a short exact sequence%
\begin{equation*}
0\longrightarrow \underleftarrow{\lim }_{n}^{1}~\underleftarrow{\lim }%
_{r}A_{n}^{\left( r\right) }\longrightarrow \underleftarrow{\lim }%
_{n}^{1}A_{n}\longrightarrow \underleftarrow{\lim }_{n}~\underleftarrow{\lim 
}_{r}^{1}A_{n}^{\left( r\right) }\longrightarrow 0.
\end{equation*}

\item \label{Mittag-Leffler-lim1-lim1}%
\begin{equation*}
\underleftarrow{\lim }_{n}^{1}~\underleftarrow{\lim }_{r}^{1}A_{n}^{\left(
r\right) }=0
\end{equation*}

\item \label{Mittag-Leffler-lim1-lim1-epi}%
\begin{equation*}
\underleftarrow{\lim }_{r}^{1}~A_{n}^{\left( r\right) }\longrightarrow 
\underleftarrow{\lim }_{r}^{1}~A_{n-1}^{\left( r\right) }
\end{equation*}%
are epimorphisms for all $n\in \mathbb{Z}$.
\end{enumerate}
\end{theorem}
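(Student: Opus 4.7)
My plan is to prove (\ref{Mittag-Leffler-exact-sequence})--(\ref{Mittag-Leffler-lim-lim}) in the stated order, then (\ref{Mittag-Leffler-lim1-lim1-epi}), (\ref{Mittag-Leffler-lim1-lim1}), and (\ref{Mittag-Leffler-lim-lim1}) in that logical order. Part (\ref{Mittag-Leffler-exact-sequence}) is the snake lemma applied to the four-term defining sequences of Definition~\ref{Def-lim-1} for the three towers. Part (\ref{Mittag-Leffler-epimorphisms}) is a recursive construction: given $(b_n)$, pick $a_0$ arbitrarily and inductively choose $a_{n+1}$ with $i(a_{n+1}) = a_n - b_n$ using surjectivity of $i$. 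Part (\ref{Mittag-Leffler-numeration}) follows from cofinality of $\{n \geq r\} \hookrightarrow \mathbb{N}$, which preserves $\underleftarrow{\lim}$ and $\underleftarrow{\lim}^1$.

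Part (\ref{Mittag-Leffler-images}) is the key technical input. The observation that every thread $(a_n) \in \underleftarrow{\lim} A_n$ satisfies $a_n = i^r(a_{n+r}) \in A_n^{(r)}$ gives the $\underleftarrow{\lim}$ isomorphism at once. For the $\underleftarrow{\lim}^1$ part I apply (\ref{Mittag-Leffler-exact-sequence}) to the short exact sequence of towers $0 \to (A_n^{(r)})_n \to (A_n)_n \to (A_n/A_n^{(r)})_n \to 0$ and show that both $\underleftarrow{\lim}$ and $\underleftarrow{\lim}^1$ of the quotient tower vanish. The crucial feature is that the $r$-fold composite $A_{n+r}/A_{n+r}^{(r)} \to A_n/A_n^{(r)}$ is identically zero (because $i^r$ lands in $A_n^{(r)}$), which kills threads in one step and turns the telescope formula $a_n = \sum_{k=0}^{r-1} i^k b_{n+k}$ into a finite sum inverting $1 - i$. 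Part (\ref{Mittag-Leffler-lim-lim}) is then immediate: threads of $(A_n)$ automatically lie in $\bigcap_r A_n^{(r)} = \underleftarrow{\lim}_r A_n^{(r)}$, yielding mutually inverse maps between $\underleftarrow{\lim}_n A_n$ and $\underleftarrow{\lim}_n \underleftarrow{\lim}_r A_n^{(r)}$.

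For (\ref{Mittag-Leffler-lim1-lim1-epi}) I factor the transition $i : A_n^{(r)} \to A_{n-1}^{(r)}$ as
\[ A_n^{(r)} \twoheadrightarrow A_{n-1}^{(r+1)} \hookrightarrow A_{n-1}^{(r)}, \]
where the first arrow is surjective because $i(A_n^{(r)}) = i^{r+1}(A_{n+r}) = A_{n-1}^{(r+1)}$. Applying (\ref{Mittag-Leffler-exact-sequence}) to the short exact sequence $0 \to \ker i \to A_n^{(r)} \twoheadrightarrow A_{n-1}^{(r+1)} \to 0$ of $r$-indexed towers gives a surjection $\underleftarrow{\lim}_r^1 A_n^{(r)} \twoheadrightarrow \underleftarrow{\lim}_r^1 A_{n-1}^{(r+1)}$, and (\ref{Mittag-Leffler-numeration}) applied to the $r$-tower identifies the target with $\underleftarrow{\lim}_r^1 A_{n-1}^{(r)}$. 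Part (\ref{Mittag-Leffler-lim1-lim1}) is then immediate: the tower $(\underleftarrow{\lim}_r^1 A_n^{(r)})_n$ has surjective transitions by (\ref{Mittag-Leffler-lim1-lim1-epi}), so (\ref{Mittag-Leffler-epimorphisms}) yields $\underleftarrow{\lim}_n^1 \underleftarrow{\lim}_r^1 A_n^{(r)} = 0$.

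For (\ref{Mittag-Leffler-lim-lim1}) I invoke the Grothendieck--Roos spectral sequence
\[ E_2^{p,q} = \underleftarrow{\lim}_n^p \underleftarrow{\lim}_r^q A_n^{(r)} \;\Longrightarrow\; \underleftarrow{\lim}_{\mathbb{N}\times\mathbb{N}}^{p+q} A_n^{(r)} \]
for the composition of limit functors over the bi-indexed system. The directed set $\mathbb{N}\times\mathbb{N}$ has cohomological dimension at most one (the diagonal $\mathbb{N} \hookrightarrow \mathbb{N}^2$ is cofinal, and $\underleftarrow{\lim}^q$ vanishes for $q \geq 2$ on towers), so only the four entries with $p, q \in \{0, 1\}$ survive at $E_2$; the only potentially nonzero differential $d_2 : E_2^{0,1} \to E_2^{2,0}$ has zero target, the sequence degenerates, and the filtration at total degree one produces the desired short exact sequence once the abutment is identified with $\underleftarrow{\lim}_n^1 A_n$. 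For that identification, diagonal cofinality gives $\underleftarrow{\lim}_{\mathbb{N}^2}^1 A_n^{(r)} \simeq \underleftarrow{\lim}_n^1 A_n^{(n)}$, and a variant of the telescope argument of (\ref{Mittag-Leffler-images}) applied to the diagonal quotient $(A_n/A_n^{(n)})_n$ --- where the $k$-fold composite vanishes once $k \geq n$ --- identifies $\underleftarrow{\lim}_n^1 A_n^{(n)}$ with $\underleftarrow{\lim}_n^1 A_n$. The main obstacle is justifying the Roos spectral sequence and its degeneration in this setting; once those inputs are in place, the rest is straightforward bookkeeping.
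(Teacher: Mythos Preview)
Your proposal is correct, and the overall architecture matches the paper's: parts (\ref{Mittag-Leffler-exact-sequence})--(\ref{Mittag-Leffler-images}) are handled by elementary tower arguments, (\ref{Mittag-Leffler-lim1-lim1-epi}) by the factorization $A_n^{(r)} \twoheadrightarrow A_{n-1}^{(r+1)} \hookrightarrow A_{n-1}^{(r)}$ plus (\ref{Mittag-Leffler-exact-sequence}), and (\ref{Mittag-Leffler-lim-lim})--(\ref{Mittag-Leffler-lim1-lim1}) by a double-limit spectral sequence. A few differences are worth noting.

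For (\ref{Mittag-Leffler-numeration}) and (\ref{Mittag-Leffler-images}) the paper writes down explicit cochain homotopy equivalences between the two-term complexes $\prod A_n \xrightarrow{1-i} \prod A_n$; your cofinality/quotient-tower arguments are cleaner and equally valid. For (\ref{Mittag-Leffler-lim1-lim1}) your route via (\ref{Mittag-Leffler-lim1-lim1-epi}) and (\ref{Mittag-Leffler-epimorphisms}) is more elementary than the paper's, which reads it off the bicomplex.

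The one place where the paper is more efficient is the identification of the abutment in (\ref{Mittag-Leffler-lim-lim1}). Rather than invoking an abstract Grothendieck--Roos spectral sequence and then passing to the diagonal, the paper writes down the concrete $2\times 2$ bicomplex $F^{st} = \prod_n \prod_r A_n^{(r)}$ with differentials $1-i$ in each direction, and runs \emph{both} classical spectral sequences. The second one (taking $\underleftarrow{\lim}_n$ first) has $\mathcal{E}_2^{st} = \underleftarrow{\lim}_r^s \underleftarrow{\lim}_n^t A_n^{(r)}$; by (\ref{Mittag-Leffler-images}) the inner limits are constant in $r$, so $\mathcal{E}_2^{01} = \mathcal{E}_2^{11} = 0$ and the abutment is immediately $H^0 = \underleftarrow{\lim}_n A_n$, $H^1 = \underleftarrow{\lim}_n^1 A_n$, $H^2 = 0$. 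This bypasses your diagonal-cofinality step and the extra telescope argument for $(A_n/A_n^{(n)})_n$, and simultaneously yields (\ref{Mittag-Leffler-lim-lim}), (\ref{Mittag-Leffler-lim-lim1}), and (\ref{Mittag-Leffler-lim1-lim1}) from the first spectral sequence. Since you already flagged justifying Roos as ``the main obstacle'', replacing it with this explicit $2\times 2$ bicomplex removes that obstacle entirely.
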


\begin{proof}
To prove (\ref{Mittag-Leffler-exact-sequence}), consider the following short
exact sequence of cochain complexes:%
\begin{equation*}
\begin{diagram}[size=3.0em,textflow]
0 & \rTo & \dprod\limits_{n\geq 0}A_{n} & \rTo & \dprod\limits_{n\geq 0}B_{n} & \rTo & \dprod\limits_{n\geq 0}C_{n} & \rTo & 0 \\
  &      & \dTo_{1-i}                   &      & \dTo_{1-i}                   &      & \dTo_{1-i}           \\
0 & \rTo & \dprod\limits_{n\geq 0}A_{n} & \rTo & \dprod\limits_{n\geq 0}B_{n} & \rTo & \dprod\limits_{n\geq 0}C_{n} & \rTo & 0 \\
\end{diagram}%
\end{equation*}%
The long exact sequence of cohomologies reduces to the desired $8$ term
exact sequence, because%
\begin{equation*}
H^{n}\left( \dprod\limits_{n\geq 0}A_{n}\overset{1-i}{\longrightarrow }%
\dprod\limits_{n\geq 0}A_{n}\right) =\left\{ 
\begin{array}{ccccc}
\underleftarrow{\lim }_{n}~A_{n} & \text{if} & n=0 &  &  \\ 
\underleftarrow{\lim }_{n}^{1}~A_{n} & \text{if} & n=1 &  &  \\ 
0 & \text{if} & n<0 & \text{or} & n>1%
\end{array}%
\right. ,
\end{equation*}%
and similarly for $\left( B_{n}\right) $ and $\left( C_{n}\right) $.

To prove (\ref{Mittag-Leffler-epimorphisms}), notice that%
\begin{equation*}
\dprod\limits_{n\geq 0}A_{n}\overset{1-i}{\longrightarrow }%
\dprod\limits_{n\geq 0}A_{n}
\end{equation*}%
is an epimorphism whenever all $i_{n}$ are epimorphisms.

To prove (\ref{Mittag-Leffler-numeration}), let%
\begin{equation*}
\begin{diagram}[size=3.0em,textflow]
0 & \rTo & C^{0}=\dprod\limits_{n\geq 0}A_{n+1} & \rTo^{d=1-i} & C^{1}=\dprod\limits_{n\geq 0}A_{n+1} & \rTo & 0 \\
  &      &  \dTo^{i} \uTo_{j}                   &              & \dTo^{i} \uTo_{j} \\
0 & \rTo & D^{0}=\dprod\limits_{n\geq 0}A_{n} & \rTo^{d=1-i} & D^{1}=\dprod\limits_{n\geq 0}A_{n} & \rTo & 0 \\
\end{diagram}%
\end{equation*}%
be a pair of morphisms%
\begin{equation*}
C^{\ast }\overset{i}{\longrightarrow }D^{\ast }\overset{j}{\longrightarrow }%
C^{\ast }
\end{equation*}%
of cochain complexes where $j$ is the natural projection.

Define cochain homotopies%
\begin{equation*}
\begin{diagram}[size=3.0em,textflow]
0 & \rTo & C^{0}=\dprod\limits_{n\geq 0}A_{n+1} & \rTo^{1-i} & C^{1}=\dprod\limits_{n\geq 0}A_{n+1} & \rTo & 0 \\
  &      &  \dTo^{ji} \dTo_{1}                  & \ldTo^{S}    & \dTo^{ji} \dTo_{1} \\
0 & \rTo & C^{0}=\dprod\limits_{n\geq 0}A_{n+1}   & \rTo_{1-i} & C^{1}=\dprod\limits_{n\geq 0}A_{n+1} & \rTo & 0 \\
\end{diagram}%
\end{equation*}%
by%
\begin{eqnarray*}
S\left( a_{1},a_{2},...,a_{n},...\right) &=&\left(
-a_{1},-a_{2},...,-a_{n},...\right) \\
\left( ji-1\right) \left( a_{1},a_{2},...,a_{n},...\right) &=&\left(
ia_{2}-a_{1},ia_{3}-a_{2},...,ia_{n+1}-a_{n},...\right) , \\
ji-1 &=&Sd+dS,
\end{eqnarray*}%
and%
\begin{equation*}
\begin{diagram}[size=3.0em,textflow]
0 & \rTo & D^{0}=\dprod\limits_{n\geq 0}A_{n} & \rTo^{1-i} & D^{1}=\dprod\limits_{n\geq 0}A_{n} & \rTo & 0 \\
  &      &  \dTo^{ij} \dTo_{1}                  & \ldTo^{T}    & \dTo^{ji} \dTo_{1} \\
0 & \rTo & D^{0}=\dprod\limits_{n\geq 0}A_{n}   & \rTo_{1-i} & D^{1}=\dprod\limits_{n\geq 0}A_{n} & \rTo & 0 \\
\end{diagram}%
\end{equation*}

by%
\begin{eqnarray*}
T\left( a_{0},a_{1},...,a_{n},...\right) &=&\left(
-a_{0},-a_{1},...,-a_{n},...\right) \\
\left( ij-1\right) \left( a_{0},a_{1},...,a_{n},...\right) &=&\left(
ia_{1}-a_{0},ia_{1}-a_{0},...,ia_{n+1}-a_{n},...\right) , \\
ij-1 &=&Td+dT.
\end{eqnarray*}

Therefore, $i$ and $j$ are cochain homotopy equivalences which induce the
isomorphisms%
\begin{equation*}
\underleftarrow{\lim }_{n}~A_{n+1}%
\simeq%
\underleftarrow{\lim }_{n}~A_{n},~\underleftarrow{\lim }_{n}^{1}~A_{n+1}%
\simeq%
\underleftarrow{\lim }_{n}^{1}~A_{n}.
\end{equation*}%
By induction on $r$, one can easily deduce the desired isomorphisms%
\begin{equation*}
\underleftarrow{\lim }_{n}^{{}}~A_{n+r}%
\simeq%
\underleftarrow{\lim }_{n}^{{}}~A_{n},~\underset{n}{\underleftarrow{\lim }%
^{1}}A_{n+r}%
\simeq%
\underleftarrow{\lim }_{n}^{1}~A_{n}.
\end{equation*}%
To prove (\ref{Mittag-Leffler-images}), let%
\begin{equation*}
\begin{diagram}[size=3.0em,textflow]
0 & \rTo & C^{0}=\dprod\limits_{n\geq 0}A_{n} & \rTo^{d=1-i} & C^{1}=\dprod\limits_{n\geq 0}A_{n} & \rTo & 0 \\
  &      &  \dTo^{i} \uTo_{j}                   &              & \dTo^{i} \uTo_{j} \\
0 & \rTo & D^{0}=\dprod\limits_{n\geq 0}A_{n}^{(1)} & \rTo^{d=1-i} & D^{1}=\dprod\limits_{n\geq 0}A_{n}^{(1)} & \rTo & 0 \\
\end{diagram}%
\end{equation*}%
be a pair of morphisms%
\begin{equation*}
C^{\ast }\overset{i}{\longrightarrow }D^{\ast }\overset{j}{\longrightarrow }%
C^{\ast }
\end{equation*}%
of cochain complexes where $j$ is the natural inclusion.

Define cochain homotopies%
\begin{equation*}
\begin{diagram}[size=3.0em,textflow]
0 & \rTo & C^{0}=\dprod\limits_{n\geq 0}A_{n} & \rTo^{1-i} & C^{1}=\dprod\limits_{n\geq 0}A_{n} & \rTo & 0 \\
  &      &  \dTo^{ji} \dTo_{1}                  & \ldTo^{S}    & \dTo^{ji} \dTo_{1} \\
0 & \rTo & C^{0}=\dprod\limits_{n\geq 0}A_{n}   & \rTo_{1-i} & C^{1}=\dprod\limits_{n\geq 0}A_{n} & \rTo & 0 \\
\end{diagram}%
\end{equation*}%
by%
\begin{eqnarray*}
S\left( a_{0},a_{1},...,a_{n},...\right) &=&\left(
-a_{0},-a_{1},...,-a_{n},...\right) \\
\left( ji-1\right) \left( a_{0},a_{1},...,a_{n},...\right) &=&\left(
ia_{1}-a_{0},ia_{1}-a_{0},...,ia_{n+1}-a_{n},...\right) , \\
ji-1 &=&Sd+dS.
\end{eqnarray*}%
and%
\begin{equation*}
\begin{diagram}[size=3.0em,textflow]
0 & \rTo & D^{0}=\dprod\limits_{n\geq 0}A_{n}^{(1)} & \rTo^{1-i} & D^{1}=\dprod\limits_{n\geq 0}A_{n}^{(1)} & \rTo & 0 \\
  &      &  \dTo^{ij} \dTo_{1}                  & \ldTo^{T}    & \dTo^{ji} \dTo_{1} \\
0 & \rTo & D^{0}=\dprod\limits_{n\geq 0}A_{n}^{(1)}   & \rTo_{1-i} & D^{1}=\dprod\limits_{n\geq 0}A_{n}^{(1)} & \rTo & 0 \\
\end{diagram}%
\end{equation*}

by%
\begin{eqnarray*}
T\left( a_{0},a_{1},...,a_{n},...\right) &=&\left(
-a_{0},-a_{1},...,-a_{n},...\right) \\
\left( ij-1\right) \left( a_{0},a_{1},...,a_{n},...\right) &=&\left(
ia_{1}-a_{0},ia_{1}-a_{0},...,ia_{n+1}-a_{n},...\right) , \\
ij-1 &=&Td+dT.
\end{eqnarray*}

Hence, $i$ and $j$ are cochain homotopy equivalences which induce the
isomorphisms%
\begin{equation*}
\underleftarrow{\lim }_{n}~A_{n}^{\left( 1\right) }%
\simeq%
\underleftarrow{\lim }_{n}~A_{n},~\underleftarrow{\lim }_{n}^{1}~A_{n}^{%
\left( 1\right) }%
\simeq%
\underleftarrow{\lim }_{n}^{1}~A_{n}.
\end{equation*}%
By induction on $r$, one can easily deduce the desired isomorphisms%
\begin{equation*}
\underleftarrow{\lim }_{n}~A_{n}^{\left( r\right) }%
\simeq%
\underleftarrow{\lim }_{n}~A_{n},~\underleftarrow{\lim }_{n}^{1}~A_{n}^{%
\left( r\right) }%
\simeq%
\underleftarrow{\lim }_{n}^{1}~A_{n}.
\end{equation*}

To prove (\ref{Mittag-Leffler-lim-lim})-(\ref{Mittag-Leffler-lim1-lim1}),
consider the cochain bicomplex $F^{\ast \ast }$:%
\begin{equation*}
\begin{diagram}[size=3.0em,textflow]
0 & \rTo & F^{01}=\dprod\limits_{n\geq 0}\dprod\limits_{r\geq 0}A_{n}^{\left( r\right) } & \rTo^{\left( 1-i,1\right)} & F^{11}=\dprod\limits_{n\geq 0}\dprod\limits_{r\geq 0}A_{n}^{\left( r\right) } & \rTo & 0 \\
  &      &  \uTo^{\left( 1,1-i\right)}                   &              & \uTo^{\left( 1,1-i\right)} \\
0 & \rTo & F^{00}=\dprod\limits_{n\geq 0}\dprod\limits_{r\geq 0}A_{n}^{\left( r\right) } & \rTo^{\left( 1-i,1\right)} & F^{10}=\dprod\limits_{n\geq 0}\dprod\limits_{r\geq 0}A_{n}^{\left( r\right) } & \rTo & 0 \\
\end{diagram}%
\end{equation*}

($F^{st}=0$ if $s\neq 0$, $1$, or if $t\neq 0$, $1$). It produces two
\textquotedblleft classical\textquotedblright\ spectral sequences converging
to the cohomology of the total complex $T^{\ast }$:

\begin{eqnarray*}
E_{r}^{st} &\implies &H^{s+t}\left( T^{\ast }\right) , \\
\mathcal{E}_{r}^{st} &\implies &H^{s+t}\left( T^{\ast }\right) ,
\end{eqnarray*}%
where%
\begin{eqnarray*}
E_{2}^{00} &=&\underleftarrow{\lim }_{n}~\underleftarrow{\lim }%
_{r}A_{n}^{\left( r\right) },~E_{2}^{10}=\underleftarrow{\lim }_{n}^{1}~%
\underleftarrow{\lim }_{r}A_{n}^{\left( r\right) }, \\
E_{2}^{01} &=&\underleftarrow{\lim }_{n}~\underleftarrow{\lim }%
_{r}^{1}~A_{n}^{\left( r\right) },~E_{2}^{11}=\underleftarrow{\lim }_{n}^{1}~%
\underleftarrow{\lim }_{r}^{1}~A_{n}^{\left( r\right) },
\end{eqnarray*}

and, due to (\ref{Mittag-Leffler-images}),%
\begin{eqnarray*}
\mathcal{E}_{2}^{00} &=&\underleftarrow{\lim }_{r}~\underleftarrow{\lim }%
_{n}~A_{n}^{\left( r\right) }=\underleftarrow{\lim }_{r}~\underleftarrow{%
\lim }_{n}~A_{n}=\underleftarrow{\lim }_{n}~A_{n}, \\
\mathcal{E}_{2}^{10} &=&\underleftarrow{\lim }_{r}~\underleftarrow{\lim }%
_{n}^{1}~A_{n}^{\left( r\right) }=\underleftarrow{\lim }_{r}~\underleftarrow{%
\lim }_{n}^{1}~A_{n}=\underleftarrow{\lim }_{n}^{1}~A_{n}, \\
\mathcal{E}_{2}^{01} &=&\underleftarrow{\lim }_{r}^{1}~\underleftarrow{\lim }%
_{n}~A_{n}^{\left( r\right) }=\underleftarrow{\lim }_{r}^{1}~\underleftarrow{%
\lim }_{n}~A_{n}=0, \\
\mathcal{E}_{2}^{11} &=&\underleftarrow{\lim }_{r}^{1}~\underleftarrow{\lim }%
_{n}^{1}~A_{n}^{\left( r\right) }=\underleftarrow{\lim }_{r}^{1}~%
\underleftarrow{\lim }_{n}^{1}~A_{n}=0.
\end{eqnarray*}

It follows that%
\begin{eqnarray*}
\underleftarrow{\lim }_{n}^{{}}~\underleftarrow{\lim }_{r}^{{}}~A_{n}^{%
\left( r\right) } &=&E_{2}^{00}=H^{0}\left( T^{\ast }\right) =\mathcal{E}%
_{2}^{00}=\underleftarrow{\lim }_{n}~A_{n}, \\
\underleftarrow{\lim }_{n}^{1}~\underleftarrow{\lim }_{r}^{1}~A_{n}^{\left(
r\right) } &=&E_{2}^{11}=H^{2}\left( T^{\ast }\right) =\mathcal{E}%
_{2}^{11}=0.
\end{eqnarray*}

Moreover, from the second spectral sequence, 
\begin{equation*}
H^{1}\left( T^{\ast }\right) =\mathcal{E}_{2}^{10}=\underleftarrow{\lim }%
_{n}^{1}~A_{n},
\end{equation*}%
while from the first one, $H^{1}\left( T^{\ast }\right) $ has a two-fold
filtration with subquotients $E_{2}^{01}$ and $E_{2}^{10}$, giving the
desired exact sequence%
\begin{equation*}
0\longrightarrow E_{2}^{10}=\underleftarrow{\lim }_{n}^{1}~\underleftarrow{%
\lim }_{r}^{{}}~A_{n}^{\left( r\right) }\longrightarrow \underleftarrow{\lim 
}_{n}^{1}~A_{n}\longrightarrow \underleftarrow{\lim }_{n}^{{}}~%
\underleftarrow{\lim }_{r}^{1}~A_{n}^{\left( r\right)
}=E_{2}^{01}\longrightarrow 0.
\end{equation*}

It remains only to prove (\ref{Mittag-Leffler-lim1-lim1}). Fix $n$, and
consider an exact sequence of towers (with respect to $r$):%
\begin{equation*}
0\longrightarrow \left( B^{\left( r\right) }\right) \longrightarrow \left(
A_{n}^{\left( r\right) }\right) \overset{i}{\longrightarrow }\left(
A_{n-1}^{\left( r+1\right) }\right) \longrightarrow 0
\end{equation*}%
where 
\begin{equation*}
B^{\left( r\right) }=\ker \left( A_{n}^{\left( r\right) }\overset{i}{%
\longrightarrow }A_{n-1}^{\left( r+1\right) }\right) .
\end{equation*}%
This exact sequence induces the $8$ term exact sequence from (1)%
\begin{eqnarray*}
0 &\longrightarrow &\underleftarrow{\lim }_{r}^{{}}~B^{\left( r\right)
}\longrightarrow \underleftarrow{\lim }_{r}^{{}}~A_{n}^{\left( r\right)
}\longrightarrow \underleftarrow{\lim }_{r}^{{}}~A_{n-1}^{\left( r+1\right)
}\longrightarrow \\
&\longrightarrow &\underleftarrow{\lim }_{r}^{1}~B^{\left( r\right)
}\longrightarrow \underleftarrow{\lim }_{r}^{1}~A_{n}^{\left( r\right)
}\longrightarrow \underleftarrow{\lim }_{r}^{1}~A_{n-1}^{\left( r+1\right)
}\longrightarrow 0.
\end{eqnarray*}%
It follows that, due to (\ref{Mittag-Leffler-images}), 
\begin{equation*}
\underleftarrow{\lim }_{r}^{1}~A_{n-1}^{\left( r+1\right) }=\underleftarrow{%
\lim }_{r}^{1}~A_{n-1}^{\left( r\right) },
\end{equation*}%
and 
\begin{equation*}
\underleftarrow{\lim }_{r}^{1}~A_{n}^{\left( r\right) }\longrightarrow 
\underleftarrow{\lim }_{r}^{1}~A_{n-1}^{\left( r+1\right) }=\underleftarrow{%
\lim }_{r}^{1}~A_{n-1}^{\left( r\right) }
\end{equation*}%
is an epimorphism.
\end{proof}

\subsection{Bicomplexes}

\begin{definition}
\label{Def-total-complex}Let $\left( C^{\ast \ast },d,\delta \right) $ be a
cochain bicomplex, with the horizontal differential $d$ and the vertical
differential $\delta $, concentrated in the I and IV quadrant, i.e. $%
C^{st}=0 $ if $s<0$. Let $Tot\left( C\right) $ be the following cochain
complex:%
\begin{equation*}
Tot\left( C\right) ^{n}=\dprod\limits_{s+t=n}C^{st}
\end{equation*}%
with the differential $\left( \partial c\right) ^{st}=dc^{s-1,t}+\left(
-1\right) ^{s}\delta c^{s,t-1}$ where%
\begin{eqnarray*}
\left( c^{st}\right) &\in &\dprod\limits_{s+t=n-1}C^{st}=Tot\left( C\right)
^{n-1}, \\
\left( \left( \partial c\right) ^{st}\right) &\in
&\dprod\limits_{s+t=n-1}C^{st}=Tot\left( C\right) ^{n}.
\end{eqnarray*}
\end{definition}

Clearly, $\partial \partial =0$:%
\begin{eqnarray*}
\left( \partial \partial c\right) ^{st} &=&d\left( \partial c\right)
^{s-1,t}+\left( -1\right) ^{s}\delta \left( \partial c\right) ^{s,t-1}= \\
&=&d\left( dc^{s-2,t}+\left( -1\right) ^{s-1}\delta c^{s-1,t-1}\right)
+\left( -1\right) ^{s}\delta \left( dc^{s-1,t-1}+\left( -1\right) ^{s}\delta
c^{s,t-2}\right) = \\
&=&\left( -1\right) ^{s-1}d\delta c^{s-1,t-1}+\left( -1\right) ^{s}\delta
dc^{s-1,t-1}=\left( -1\right) ^{s}\left( -d\delta +\delta d\right) =0,
\end{eqnarray*}%
and $\left( Tot\left( C\right) ,\partial \right) $ is a cochain complex.

\begin{remark}
Notice that we use \textbf{products} instead of \textbf{direct sums}.
\end{remark}

Let now 
\begin{equation*}
Tot^{\left( p\right) }\left( C\right) ^{n}=\dprod\limits_{s=0}^{p}C^{s,n-s}.
\end{equation*}%
Denote by letter $\partial ^{\left( p\right) }$ the differential%
\begin{eqnarray*}
\partial ^{\left( p\right) } &:&Tot^{\left( p\right) }\left( C\right)
^{n-1}\longrightarrow Tot^{\left( p\right) }\left( C\right) ^{n-1}, \\
\left( \partial ^{\left( p\right) }c\right) ^{s,n-s} &=&dc^{s-1,n-s}+\left(
-1\right) ^{s}\delta c^{s,n-s-1}.
\end{eqnarray*}%
$\left( Tot^{\left( p\right) }\left( C\right) ,\partial ^{\left( p\right)
}\right) $ become cochain complexes, and there are natural projections%
\begin{equation*}
Tot\left( C\right) \longrightarrow Tot^{\left( p\right) }\left( C\right)
\end{equation*}%
that are epimorphisms of cochain complexes. Clearly,%
\begin{equation*}
Tot\left( C\right) =\underleftarrow{\lim }_{p}~Tot^{\left( p\right) }\left(
C\right) .
\end{equation*}

Denote by 
\begin{eqnarray*}
D^{st} &=&D_{1}^{st}=H^{s+t}\left( Tot^{\left( s\right) }\left( C\right)
\right) , \\
E^{st} &=&E_{1}^{st}=H^{s+t}\Gamma ^{\left( s\right) }\left( C\right) ,
\end{eqnarray*}%
where 
\begin{equation*}
\Gamma ^{\left( s\right) }\left( C\right) =\ker \left( Tot^{\left( s\right)
}\left( C\right) \longrightarrow Tot^{\left( s-1\right) }\left( C\right)
\right)
\end{equation*}%
The long exact sequence corresponding to the short exact sequence of
complexes%
\begin{equation*}
0\longrightarrow \Gamma ^{\left( s\right) }\left( C\right) \longrightarrow
Tot^{\left( s\right) }\left( C\right) \longrightarrow Tot^{\left( s-1\right)
}\left( C\right) \longrightarrow 0
\end{equation*}%
gives an exact couple%
\begin{equation*}
\begin{diagram}[size=3.0em,textflow]
D=D_{1} &    & \rTo^{i}_{\left( -1,1\right)}  &       & D=D_{1} \\
  & \luTo^{k}_{\left( 0,0\right)}  &     & \ldTo^{j}_{\left( 1,0\right)}  \\
  &        & E=E_{1} \\
\end{diagram}%
\end{equation*}%
of bi-graded abelian groups and homogeneous morphisms with the bi-degrees
written at the corresponding arrows.

One can construct the derived exact couples%
\begin{equation*}
\begin{diagram}[size=3.0em,textflow]
D_{r} &    & \rTo^{i_{r}}_{\left( -1,1\right)}  &       & D_{r} \\
  & \luTo^{k_{r}}_{\left( 0,0\right)}  &     & \ldTo^{j_{r}}_{\left( r,1-r\right)}  \\
  &        & E_{r} \\
\end{diagram}%
\end{equation*}%
where $D_{r}=i^{r-1}D$, $E_{r}=H\left( E_{r-1},d=j_{r-1}k_{r-1}\right) $ is
the cohomology of the complex build on the previous couple, $i_{r}$ is
induced by $i_{r-1}$, $k_{r}$ is induced by $k_{r-1}$, while $%
j_{r}=j_{r-1}\left( i_{r-1}\right) ^{-1}$.

\begin{theorem}
\label{Spectral-bicomplex}(spectral sequence of a bicomplex)

\begin{enumerate}
\item \label{Spectral-bicomplex-E1}$E_{1}^{st}=\left( H_{ver}\left( C\right)
\right) ^{st}$ where $H_{ver}$ is the cohomology in the vertical direction.

\item \label{Spectral-bicomplex-E2}$E_{2}^{st}=\left( H_{hor}H_{ver}\left(
C\right) \right) ^{st}$ where $H_{hor}$ is the cohomology in the horizontal
direction.

\item \label{Spectral-bicomplex-zero-step}$D_{2}^{0,n}%
\simeq%
E_{2}^{0,n}.$

\item \label{Spectral-bicomplex-long-exact-sequences}The groups $D_{2}^{st}$
are included in long exact sequences%
\begin{eqnarray*}
0 &\longrightarrow &E_{2}^{1,n-1}\longrightarrow
D_{2}^{1,n-1}\longrightarrow D_{2}^{0,n}\longrightarrow \\
&\longrightarrow &E_{2}^{2,n-1}\longrightarrow D_{2}^{2,n-1}\longrightarrow
D_{2}^{1,n}\longrightarrow E_{2}^{3,n-1}\longrightarrow ... \\
... &\longrightarrow &E_{2}^{s,n-1}\longrightarrow
D_{2}^{s,n-1}\longrightarrow D_{2}^{s-1,n}\longrightarrow
E_{2}^{s+1,n-1}\longrightarrow ...
\end{eqnarray*}

\item \label{Spectral-bicomplex-lim1-Tot-lim-Tot}There are short exact
sequences ($n\in \mathbb{Z}$)%
\begin{equation*}
0\longrightarrow \underleftarrow{\lim }_{s}^{1}H^{n-1}Tot^{\left( s\right)
}\left( C\right) \longrightarrow H^{n}Tot\left( C\right) \longrightarrow 
\underleftarrow{\lim }_{s}H^{n}Tot^{\left( s\right) }\left( C\right)
\longrightarrow 0.
\end{equation*}

\item \label{Spectral-bicomplex-lim1-Dr-lim-Dr}There are short exact
sequences ($r\geq 1,n\in \mathbb{Z}$)%
\begin{equation*}
0\longrightarrow \underleftarrow{\lim }_{s}^{1}D_{r}^{s,n-s-1}%
\longrightarrow H^{n}Tot\left( C\right) \longrightarrow \underleftarrow{\lim 
}_{s}D_{r}^{s,n-s}\longrightarrow 0.
\end{equation*}

\item \label{Spectral-bicomplex-isomorphism}If $C\longrightarrow C^{\prime }$
is a morphism of bicomplexes inducing isomorphisms $E_{2}^{st}\left(
C\right) 
\simeq%
E_{2}^{st}\left( C^{\prime }\right) $ for all $s$, $t$, then%
\begin{equation*}
H^{n}Tot\left( C\right) 
\simeq%
H^{n}Tot\left( C^{\prime }\right) .
\end{equation*}

\item \label{Spectral-bicomplex-convergence}If 
\begin{equation*}
\underleftarrow{\lim }_{r}^{1}~E_{r}^{st}=0
\end{equation*}%
then $E_{r}^{st}$ \textbf{completely} (in the sense of \cite%
{Bousfield-Kan-MR0365573}, IX.5.3) or \textbf{strongly} (in the sense of 
\cite{Boardman-MR1718076}, Definition 5.2) converges to $H^{s+t}\left(
Tot\left( C\right) \right) $. The latter means that $H^{n}\left( Tot\left(
C\right) \right) $ is an inverse limit: $H^{n}\left( Tot\left( C\right)
\right) 
\simeq%
$ 
\begin{equation*}
\underleftarrow{\lim }_{s}\left( ...\longrightarrow Q^{s,n-s}\longrightarrow
Q^{s-1,n-s+1}\longrightarrow ...\longrightarrow Q^{0,n}\longrightarrow
Q^{-1,n+1}=0\right)
\end{equation*}%
of epimorphisms with the kernels%
\begin{equation*}
\ker \left( Q^{s,n-s}\longrightarrow Q^{s-1,n-s+1}\right) 
\simeq%
E_{\infty }^{st}:=\underset{r}{\underleftarrow{\lim }}E_{r}^{st}.
\end{equation*}
\end{enumerate}
\end{theorem}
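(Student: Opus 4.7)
The plan is to exploit the exact couple $(D_1, E_1, i_1, j_1, k_1)$ together with the decomposition $Tot(C)=\underleftarrow{\lim}_s Tot^{(s)}(C)$, handling the eight parts in order. For (\ref{Spectral-bicomplex-E1}), the complex $\Gamma^{(s)}(C)$ is concentrated in horizontal degree $s$ with differential $(-1)^s\delta$, so $E_1^{st} = H^{s+t}\Gamma^{(s)}(C) \simeq (H_{\mathrm{ver}}(C))^{st}$. For (\ref{Spectral-bicomplex-E2}), I would unwind $d_1 = j_1 k_1$ via the connecting homomorphism of $0\to\Gamma^{(s+1)}\to Tot^{(s+1)}\to Tot^{(s)}\to 0$: this map is induced by $d$, giving the horizontal differential on $H_{\mathrm{ver}}$. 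For (\ref{Spectral-bicomplex-zero-step}), since $\Gamma^{(0)} = Tot^{(0)}$ the map $k_1\colon E_1^{0,n}\to D_1^{0,n}$ is an isomorphism, and exactness at $D_1^{0,n}$ together with $D_1^{-1,n+1}=0$ identifies $E_2^{0,n} = \ker(j_1) = \mathrm{image}(i_1) = D_2^{0,n}$. For (\ref{Spectral-bicomplex-long-exact-sequences}), I would unroll the derived triangle $E_2\to D_2\to D_2\to E_2$: the bidegrees of $k_2$, $i_2$, $j_2$ sum to $(1,0)$, permitting the unrolling, and the sequence caps at zero on the left because $D_2^{-1,n}=0$.

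For (\ref{Spectral-bicomplex-lim1-Tot-lim-Tot}), the transitions $Tot^{(s)}(C)\to Tot^{(s-1)}(C)$ are degreewise surjections, so the short exact sequence of complexes
\[
0\longrightarrow Tot(C)^{\ast}\longrightarrow \dprod_{s}Tot^{(s)}(C)^{\ast}\overset{1-p}{\longrightarrow}\dprod_{s}Tot^{(s)}(C)^{\ast}\longrightarrow 0
\]
is exact, and its long exact cohomology sequence (together with the commutation of products and cohomology) yields the desired $\underleftarrow{\lim}/\underleftarrow{\lim}^1$ sequence. For (\ref{Spectral-bicomplex-lim1-Dr-lim-Dr}), set $A_s := D_1^{s,n-s}$; then $D_r^{s,n-s} = A_s^{(r-1)}$ in the tower notation of Theorem~\ref{Mittag-Leffler}, so part~(\ref{Mittag-Leffler-images}) of that theorem gives $\underleftarrow{\lim}_s A_s^{(r-1)}\simeq \underleftarrow{\lim}_s A_s$ and analogously for $\underleftarrow{\lim}^1$, reducing (\ref{Spectral-bicomplex-lim1-Dr-lim-Dr}) to (\ref{Spectral-bicomplex-lim1-Tot-lim-Tot}). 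For (\ref{Spectral-bicomplex-isomorphism}), I would first prove by induction on $s$ that the $E_2$-isomorphism induces an isomorphism on $H^n Tot^{(s)}(C)$ for all $s, n$: each $Tot^{(s)}(C)$ involves only the finitely many columns $0,\ldots,s$, so its horizontal filtration spectral sequence converges unconditionally and the comparison is classical once (\ref{Spectral-bicomplex-E1})--(\ref{Spectral-bicomplex-long-exact-sequences}) are in hand; naturality of (\ref{Spectral-bicomplex-lim1-Tot-lim-Tot}) and the five lemma then yield the isomorphism on $H^n Tot(C)$.

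The main obstacle is (\ref{Spectral-bicomplex-convergence}). I would define $F^s := \ker(H^n Tot(C)\to H^n Tot^{(s-1)}(C))$ and $Q^{s,n-s} := H^n Tot(C)/F^{s+1}$; the surjectivity of $Q^{s,n-s}\to Q^{s-1,n-s+1}$ is automatic, and tracking cycles and boundaries through successive derivations of the couple identifies $F^s/F^{s+1}$ with $E_\infty^{s,n-s} = \underleftarrow{\lim}_r E_r^{s,n-s}$. The delicate point is completeness, i.e.\ that the natural map $H^n Tot(C)\to \underleftarrow{\lim}_s Q^{s,n-s}$ is an isomorphism. Using (\ref{Spectral-bicomplex-lim1-Dr-lim-Dr}) as $r$ varies and applying parts (\ref{Mittag-Leffler-lim-lim1}) and (\ref{Mittag-Leffler-lim1-lim1}) of Theorem~\ref{Mittag-Leffler} to the double tower $(D_r^{s,n-s-1})$, the obstruction to completeness sits inside $\underleftarrow{\lim}^1_r E_r^{s,n-s-1}$; its hypothesized vanishing forces $\bigcap_s F^s=0$ and upgrades the natural map to the desired isomorphism, delivering the strong convergence.
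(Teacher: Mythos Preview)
Your treatment of parts (\ref{Spectral-bicomplex-E1})--(\ref{Spectral-bicomplex-lim1-Dr-lim-Dr}) matches the paper's argument essentially line for line. For (\ref{Spectral-bicomplex-convergence}) you take a filtration-based route, defining $Q^{s,n-s}$ as a quotient of $H^n Tot(C)$, whereas the paper sets $Q^{s,n-s}:=\underleftarrow{\lim}_r D_r^{s,n-s}$ and shows directly, via the short exact sequences $0\to E_r^{st}\to D_r^{st}\to D_{r+1}^{s-1,t+1}\to 0$ (valid for $r>s$) together with Theorem~\ref{Mittag-Leffler}, that $\underleftarrow{\lim}^1_r D_r^{st}=0$ and that the tower $Q^{\bullet}$ consists of epimorphisms with kernels $E_\infty$. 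Both routes reach the same destination, but the paper's makes the dependence on Theorem~\ref{Mittag-Leffler} explicit rather than hiding it behind ``tracking cycles and boundaries''.

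There is, however, a genuine gap in your argument for (\ref{Spectral-bicomplex-isomorphism}). You claim that an $E_2$-isomorphism forces $H^n Tot^{(s)}(C)\simeq H^n Tot^{(s)}(C')$ for each fixed $s$, appealing to finite convergence of the column filtration on $Tot^{(s)}$. This is false already at $s=0$: take $C$ with $C^{0,0}=C^{1,0}=\mathbb{Z}$, horizontal $d=\mathrm{id}$, $\delta=0$, and $C'=0$ with the zero map $C\to C'$. Then $E_2(C)=E_2(C')=0$, yet $H^0 Tot^{(0)}(C)=E_1^{0,0}(C)=\mathbb{Z}\neq 0$. The underlying reason is that the $E_2$-page of the truncated bicomplex $Tot^{(s)}$ does \emph{not} coincide with the restriction of the full $E_2$-page at the edge column $p=s$: there one sees $\mathrm{coker}(E_1^{s-1,t}\to E_1^{s,t})$ rather than the subquotient $E_2^{s,t}$, so the classical comparison theorem does not apply.

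The paper sidesteps this by working with $D_2$ instead of $D_1$. From (\ref{Spectral-bicomplex-zero-step}) one has $D_2^{0,n}(C)\simeq D_2^{0,n}(C')$; feeding this into the long exact sequences of (\ref{Spectral-bicomplex-long-exact-sequences}) and applying the five lemma inductively in $s$ yields $D_2^{s,t}(C)\simeq D_2^{s,t}(C')$ for all $s,t$. One then invokes (\ref{Spectral-bicomplex-lim1-Dr-lim-Dr}) with $r=2$ (rather than (\ref{Spectral-bicomplex-lim1-Tot-lim-Tot}), which involves $D_1$) and a final five-lemma gives $H^n Tot(C)\simeq H^n Tot(C')$. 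Your outline is easily repaired along these lines: replace the false claim about $D_1^{s,t}=H^{s+t}Tot^{(s)}$ by the correct inductive statement about $D_2^{s,t}$, and use (\ref{Spectral-bicomplex-lim1-Dr-lim-Dr}) in place of (\ref{Spectral-bicomplex-lim1-Tot-lim-Tot}).
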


\begin{proof}
We could not find in the literature the proof of the statements above
directly in the form we need. The ideas of the desired proof can be found,
however, in \cite{Boardman-MR1718076}, \cite{Bousfield-Kan-MR0365573}, \cite%
{Thomason-MR826102}, \cite{Prasolov-Extraordinatory-MR1821856}, \cite%
{Prasolov-Spectral-MR1027513} and \cite{Mardesic-MR1740831}.

\begin{enumerate}
\item The $n$-th component of the kernel $\Gamma ^{\left( s\right) }\left(
C\right) $ consists of the elements%
\begin{equation*}
c=\left( 0,0,...,a\right) \in \dprod\limits_{i=0}^{s}C^{i,n-i}.
\end{equation*}%
Let us compute $\partial c$:%
\begin{equation*}
\left( \partial c\right) ^{j,n+1-j}=dc^{j-1,n+1-j}+\left( -1\right)
^{s}\delta c^{j,n-j}=0
\end{equation*}%
when $j<s$, and $=\left( -1\right) ^{s}\delta a$ when $j=s$, therefore%
\begin{equation*}
\partial c=\left( 0,0,...,\left( -1\right) ^{s}\delta a\right) .
\end{equation*}%
It means that (up to sign of the differentials) the complex $\Gamma ^{\left(
\ast \right) }\left( C\right) $ is isomorphic to the vertical $s$-th line of
the bicomplex $C^{\ast \ast }$, hence its cohomology is the vertical
cohomology of $C^{\ast \ast }$.

\item It is enough to prove that the differential%
\begin{equation*}
jk:E_{1}^{st}=H_{ver}\left( C\right) ^{st}\longrightarrow H_{ver}\left(
C\right) ^{s+1,t}
\end{equation*}%
is induced by the horizontal differential $d$. To do this, we need to
calculate $j$ and $k$. Clearly, a class%
\begin{equation*}
e^{\prime }=\left[ e\right] \in E_{1}^{st}=\frac{\ker \delta }{Im\left(
\delta \right) }
\end{equation*}%
is mapped (under the mapping $k$) to the class $ke^{\prime }=\left[
\varepsilon \right] $ of 
\begin{equation*}
\varepsilon =\left( 0,0,...,e\right) \in \ker \partial \subseteq Tot^{\left(
s\right) }\left( C\right) ^{s+t}.
\end{equation*}%
Since $Tot^{\left( s+1\right) }\left( C\right) ^{s+t}\rightarrow Tot^{\left(
s\right) }\left( C\right) ^{s+t}$ is an epimorphism, it is possible to find
an element $\theta \in Tot^{\left( s+1\right) }\left( C\right) ^{s+t}$
\textquotedblleft lying over\textquotedblright\ $\varepsilon $. We have the
right to choose%
\begin{equation*}
\theta =\left( 0,0,...,e,0\right) \in Tot^{\left( s+1\right) }\left(
C\right) ^{s+t}.
\end{equation*}%
Apply $\partial $:%
\begin{eqnarray*}
\partial \theta &=&\left( 0,0,...,\left( -1\right) ^{s}\delta e,de\right) =
\\
&=&\left( 0,0,...,0,de\right) \in \Gamma ^{\left( s+1\right) }\left(
C\right) \subseteq Tot^{\left( s+1\right) }\left( C\right) ^{s+t+1}.
\end{eqnarray*}%
Finally, $jke^{\prime }=de$, and the differential $d^{\prime }$ in the
cochain complex $E_{1}^{st}$ is induced by the horizontal differential $d$.
Therefore,%
\begin{equation*}
E_{2}^{st}=\frac{\ker \left( d^{\prime }:E_{1}^{st}\longrightarrow
E_{1}^{s+1,t}\right) }{Im\left( d^{\prime }:E_{1}^{st}\longrightarrow
E_{1}^{s+1,t}\right) }=H_{hor}H_{ver}\left( C\right) ^{st}.
\end{equation*}

\item Consider the following parts of the long exact sequences of the
derived couple%
\begin{equation*}
\begin{diagram}[size=3.0em,textflow]
D_{2} &    & \rTo^{i}_{\left( -1,1\right)}  &       & D_{2} \\
  & \luTo^{k}_{\left( 0,0\right)}  &     & \ldTo^{j}_{\left( 2,-1\right)}  \\
  &        & E_{2} \\
\end{diagram}%
\end{equation*}%
\begin{equation*}
0=D_{2}^{-2,n+1}\longrightarrow E_{2}^{0,n}\longrightarrow
D_{2}^{0,n}\longrightarrow D_{2}^{-1,n+1}=0.
\end{equation*}%
It follows that $E_{2}^{0,n}%
\simeq%
D_{2}^{0,n}$.

\item Consider again long exact sequences of the derived couple%
\begin{eqnarray*}
0 &=&D_{2}^{-1,n}\longrightarrow E_{2}^{1,n-1}\longrightarrow
D_{2}^{1,n-1}\longrightarrow D_{2}^{0,n}\longrightarrow \\
&\longrightarrow &E_{2}^{2,n-1}\longrightarrow D_{2}^{2,n-1}\longrightarrow
D_{2}^{1,n}\longrightarrow E_{2}^{3,n-1}\longrightarrow ... \\
... &\longrightarrow &E_{2}^{s,n-1}\longrightarrow
D_{2}^{s,n-1}\longrightarrow D_{2}^{s-1,n}\longrightarrow
E_{2}^{s+1,n-1}\longrightarrow ...
\end{eqnarray*}

\item Consider a short exact sequence of cochain complexes:%
\begin{equation*}
0\longrightarrow Tot\left( C\right) \longrightarrow \dprod\limits_{s\geq
0}Tot^{\left( s\right) }\left( C\right) \overset{1-p}{\longrightarrow }%
\dprod\limits_{s\geq 0}Tot^{\left( s\right) }\left( C\right) \longrightarrow
0
\end{equation*}%
where%
\begin{equation*}
p:Tot^{\left( s\right) }\left( C\right) \longrightarrow Tot^{\left(
s-1\right) }\left( C\right)
\end{equation*}%
is the natural projection. The long exact sequence of cohomologies%
\begin{eqnarray*}
... &\longrightarrow &\dprod\limits_{s\geq 0}H^{n-1}Tot^{\left( s\right)
}\left( C\right) \overset{1-p}{\longrightarrow }\dprod\limits_{s\geq
0}H^{n-1}Tot^{\left( s\right) }\left( C\right) \longrightarrow
H^{n}Tot\left( C\right) \longrightarrow \\
&\longrightarrow &\dprod\limits_{s\geq 0}H^{n}Tot^{\left( s\right) }\left(
C\right) \overset{1-p}{\longrightarrow }\dprod\limits_{s\geq
0}H^{n}Tot^{\left( s\right) }\left( C\right) \longrightarrow ...
\end{eqnarray*}%
splits into the desired pieces%
\begin{eqnarray*}
0 &\longrightarrow &\mathbf{coker}\left( \dprod\limits_{s\geq
0}H^{n-1}Tot^{\left( s\right) }\left( C\right) \overset{1-p}{\longrightarrow 
}\dprod\limits_{s\geq 0}H^{n-1}Tot^{\left( s\right) }\left( C\right) \right)
=\underleftarrow{\lim }_{s}^{1}~H^{n-1}Tot^{\left( s\right) }\left( C\right)
\longrightarrow \\
&\longrightarrow &H^{n}Tot\left( C\right) \longrightarrow \ker \left(
\dprod\limits_{s\geq 0}H^{n}Tot^{\left( s\right) }\left( C\right) \overset{%
1-p}{\longrightarrow }\dprod\limits_{s\geq 0}H^{n}Tot^{\left( s\right)
}\left( C\right) \right) =\underleftarrow{\lim }_{s}~H^{n}Tot^{\left(
s\right) }\left( C\right) \longrightarrow 0
\end{eqnarray*}

\item It follows from Theorem \ref{Mittag-Leffler} (\ref%
{Mittag-Leffler-images}) that 
\begin{eqnarray*}
&&\underleftarrow{\lim }_{s}~D_{r}^{s,n-s}%
\simeq%
\underleftarrow{\lim }_{s}~D_{1}^{s,n-s}=\underleftarrow{\lim }%
_{s}~H^{n}Tot^{\left( s\right) }\left( C\right) , \\
&&\underleftarrow{\lim }_{s}^{1}~D_{r}^{s,n-s-1}%
\simeq%
\underleftarrow{\lim }_{s}^{1}~D_{1}^{s,n-s-1}=\underleftarrow{\lim }%
_{s}^{1}~H^{n-1}Tot^{\left( s\right) }\left( C\right) .
\end{eqnarray*}

\item It follows from (\ref{Spectral-bicomplex-zero-step}) that 
\begin{equation*}
D_{2}^{0,n}\left( C\right) 
\simeq%
E_{2}^{0,n}\left( C\right) 
\simeq%
E_{2}^{0,n}\left( C^{\prime }\right) 
\simeq%
D_{2}^{0,n}\left( C^{\prime }\right) .
\end{equation*}%
The morphism $C\rightarrow C^{\prime }$ induces morphisms of exact sequences
from (\ref{Spectral-bicomplex-long-exact-sequences}) for $C$ and $C^{\prime
} $. Applying the $5$-lemma several times, one gets 
\begin{equation*}
D_{2}^{s,t}\left( C\right) 
\simeq%
D_{2}^{s,t}\left( C^{\prime }\right) .
\end{equation*}%
There is a morphism (which is an isomorphism in the second and the forth
term) of short exact sequences from (\ref{Spectral-bicomplex-lim1-Dr-lim-Dr}%
) for $r=2$. Using again the $5$-lemma, one gets the desired isomorphism 
\begin{equation*}
H^{n}Tot\left( C\right) 
\simeq%
H^{n}Tot\left( C^{\prime }\right) .
\end{equation*}

\item Consider the $r$-th derived couple%
\begin{equation*}
\begin{diagram}[size=3.0em,textflow]
D_{r} &    & \rTo^{i_{r}}_{\left( -1,1\right)}  &       & D_{r} \\
  & \luTo^{k_{r}}_{\left( 0,0\right)}  &     & \ldTo^{j_{r}}_{\left( r,1-r\right)}  \\
  &        & E_{r} \\
\end{diagram}%
\end{equation*}%
Fix $s$ and $t$. In the long exact sequence%
\begin{equation*}
...\longrightarrow D_{r}^{s-r,t+r-1}\longrightarrow
E_{r}^{st}\longrightarrow D_{r}^{st}\longrightarrow
D_{r}^{s-1,t+1}\longrightarrow ...
\end{equation*}%
the term $D_{r}^{s-r,t+r-1}$ equals zero for $r$ large enough ($r>s$), and
one gets a short exact sequence%
\begin{equation*}
0\longrightarrow E_{r}^{st}\longrightarrow D_{r}^{st}\longrightarrow
D_{r+1}^{s-1,t+1}\longrightarrow 0.
\end{equation*}%
Since $\underleftarrow{\lim }_{r}^{1}E_{r}^{st}=0$, one gets, due to Theorem %
\ref{Mittag-Leffler} (\ref{Mittag-Leffler-exact-sequence}, \ref%
{Mittag-Leffler-images}), a short exact sequence%
\begin{equation*}
0\longrightarrow \underleftarrow{\lim }_{r}~E_{r}^{st}\longrightarrow 
\underleftarrow{\lim }_{r}~D_{r}^{st}\longrightarrow \underleftarrow{\lim }%
_{r}~D_{r}^{s-1,t+1}\longrightarrow 0,
\end{equation*}%
and an isomorphism%
\begin{equation*}
\underleftarrow{\lim }_{r}^{1}~D_{r}^{st}%
\simeq%
\underleftarrow{\lim }_{r}^{1}~D_{r}^{s-1,t+1}.
\end{equation*}%
Varying $s$, one gets%
\begin{equation*}
\underleftarrow{\lim }_{r}^{1}~D_{r}^{st}%
\simeq%
\underleftarrow{\lim }_{r}^{1}~D_{r}^{s-1,t+1}%
\simeq%
...%
\simeq%
\underleftarrow{\lim }_{r}^{1}~D_{r}^{0,s+t}%
\simeq%
\underleftarrow{\lim }_{r}^{1}~D_{r}^{-1,s+t+1}=0.
\end{equation*}%
Let us denote 
\begin{equation*}
Q^{s,n-s}:=\underleftarrow{\lim }_{r}D_{r}^{s.n-s}=D_{\infty }^{s.n-s}.
\end{equation*}%
It follows from Theorem \ref{Mittag-Leffler} (\ref{Mittag-Leffler-lim-lim})
that%
\begin{equation*}
\underleftarrow{\lim }_{s}H^{n}\left( Tot^{\left( s\right) }\left( C\right)
\right) 
\simeq%
\underleftarrow{\lim }_{s}~\underleftarrow{\lim }_{r}~D_{r}^{s,n-s}%
\simeq%
\underleftarrow{\lim }_{s}~Q^{s,n-s}.
\end{equation*}%
Since 
\begin{equation*}
\left( ...\longrightarrow Q^{s,n-s}\longrightarrow
Q^{s-1,n-s+1}\longrightarrow ...\right)
\end{equation*}%
is a tower of epimorphisms, $\underleftarrow{\lim }_{s}^{1}~Q^{s,n-s}=0$,
due to Theorem \ref{Mittag-Leffler} (\ref{Mittag-Leffler-epimorphisms}). It
follows from Theorem \ref{Mittag-Leffler} (\ref{Mittag-Leffler-lim-lim1}),
since $\underleftarrow{\lim }_{s}^{1}~D_{r}^{s,n-s-1}=0$, that%
\begin{equation*}
\underleftarrow{\lim }_{s}^{1}~H^{n-1}\left( Tot^{\left( s\right) }\left(
C\right) \right) =0.
\end{equation*}%
The short exact sequence (\ref{Spectral-bicomplex-lim1-Tot-lim-Tot})\ gives
the desired isomorphism%
\begin{equation*}
H^{n}\left( Tot\left( C\right) \right) 
\simeq%
\underleftarrow{\lim }_{s}~H^{n}\left( Tot^{\left( s\right) }\left( C\right)
\right) 
\simeq%
\underleftarrow{\lim }_{s}~Q^{s,n-s}.
\end{equation*}%
We have proved that $H^{n}\left( Tot\left( C\right) \right) $ is isomorphic
to the inverse limit of the tower $\left( Q^{s,n-s}\right) _{s=0}^{\infty }$
of epimorphisms with kernels $E_{\infty }^{s,n-s}$, hence $E_{r}^{st}$
converges completely (in the sense of \cite{Bousfield-Kan-MR0365573},
IX.5.3) or strongly (in the sense of \cite{Boardman-MR1718076}, Definition
5.2) to $H^{s+t}\left( Tot\left( C\right) \right) $.
\end{enumerate}
\end{proof}

\subsection{Homotopy inverse limits}

On homotopy inverse limits of topological spaces, see \cite%
{Bousfield-Kan-MR0365573}, Chapter XI. Here we define and investigate the
homotopy inverse limits for the diagrams of chain complexes.

\begin{definition}
\label{Def-homotopy-limit}Let $\mathbf{I}$ be a small category, and let $C:%
\mathbf{I}\longrightarrow \mathbf{CHAIN}\left( \mathbb{Z}\right) $ be a
functor to the category of chain complexes of abelian groups. Negating the
indices, one gets a functor to the category of \textbf{co}chain complexes.
Take the \textbf{cosimplicial replacement} (\cite{Bousfield-Kan-MR0365573},
XI.5.1), which is a cosimplicial cochain complex, and, finally, a cochain
bicomplex%
\begin{equation*}
\left( R\mathbf{C}\right) ^{st}=\dprod\limits_{\left( i_{0}\rightarrow
i_{1}\rightarrow ...\rightarrow i_{s}\right) \in \mathbf{I}}C_{-t}\left(
i_{s}\right)
\end{equation*}%
with the horizontal differentials%
\begin{equation*}
d:\left( R\mathbf{C}\right) ^{s-1,t}\longrightarrow \left( R\mathbf{C}%
\right) ^{s,t},
\end{equation*}%
\begin{equation*}
dc\left( i_{0}\rightarrow i_{1}\rightarrow ...\rightarrow i_{s}\right) =
\end{equation*}%
\begin{equation*}
\dsum\limits_{k=0}^{s-1}\left( -1\right) ^{k}c\left( i_{0}\rightarrow
...\longrightarrow \widehat{i_{k}}\longrightarrow ...\rightarrow
i_{s}\right) +\left( -1\right) ^{s}C\left( i_{s-1}\longrightarrow
i_{s}\right) c\left( i_{0}\rightarrow ...\rightarrow i_{s-1}\right) ,
\end{equation*}%
and the vertical differentials%
\begin{eqnarray*}
\delta &:&\left( R\mathbf{C}\right) ^{s,t-1}\longrightarrow \left( R\mathbf{C%
}\right) ^{s,t}, \\
\delta c\left( i_{0}\rightarrow i_{1}\rightarrow ...\rightarrow i_{s}\right)
&=&d_{-t}\left( i_{s}\right) c\left( i_{0}\rightarrow i_{1}\rightarrow
...\rightarrow i_{s}\right)
\end{eqnarray*}%
where%
\begin{equation*}
d_{-t}\left( i_{s}\right) :C_{-t+1}\left( i_{s}\right) \longrightarrow
C_{-t}\left( i_{s}\right)
\end{equation*}%
is the differential in the chain complex $C_{\ast }\left( i_{s}\right) $.
Take the total complex $Tot\left( R\mathbf{C}\right) $ (using products, as
in Definition \ref{Def-total-complex}), and negate the indices again. The
resulting \textbf{chain} complex is called the homotopy inverse limit $%
\underleftarrow{\mathbf{holim}}_{i}~\mathbf{C}$ of the functor $\mathbf{C}$:%
\begin{equation*}
\left( \underleftarrow{\mathbf{holim}}_{i}~\mathbf{C}\right)
_{n}=\dprod\limits_{s}\dprod\limits_{\left( i_{0}\rightarrow
i_{1}\rightarrow ...\rightarrow i_{s}\right) \in \mathbf{I}}C_{n+s}\left(
i_{s}\right)
\end{equation*}%
with the differential%
\begin{equation*}
\partial :\left( \underleftarrow{\mathbf{holim}}_{i\in \mathbf{I}}~\mathbf{C}%
\right) _{n+1}\longrightarrow \left( \underleftarrow{\mathbf{holim}}_{i\in 
\mathbf{I}}~\mathbf{C}\right) _{n}
\end{equation*}%
given by%
\begin{equation*}
\partial c\left( i_{0}\rightarrow i_{1}\rightarrow ...\rightarrow
i_{s}\right) =
\end{equation*}%
\begin{eqnarray*}
&&\dsum\limits_{k=0}^{s-1}\left( -1\right) ^{k}c\left( i_{0}\rightarrow
...\longrightarrow \widehat{i_{k}}\longrightarrow ...\rightarrow
i_{s}\right) + \\
&&+\left( -1\right) ^{s}C\left( i_{s-1}\longrightarrow i_{s}\right) c\left(
i_{0}\rightarrow ...\rightarrow i_{s-1}\right) +\left( -1\right)
^{s}d_{n+s}\left( i_{s}\right) c\left( i_{0}\rightarrow i_{1}\rightarrow
...\rightarrow i_{s}\right) .
\end{eqnarray*}
\end{definition}

\begin{definition}
Let $\varphi :\mathbf{J}\rightarrow \mathbf{I}$ be a \textbf{cofinal}
functor (\textbf{left cofinal} in \cite{Bousfield-Kan-MR0365573}, XI.9.1),
and let $C:\mathbf{I}\longrightarrow CHAIN\left( \mathbb{Z}\right) $ be a
functor. The natural homomorphisms%
\begin{eqnarray*}
\dprod\limits_{\left( i_{0}\rightarrow i_{1}\rightarrow ...\rightarrow
i_{s}\right) \in \mathbf{I}}C\left( i_{s}\right) &\longrightarrow
&\dprod\limits_{\left( j_{0}\rightarrow j_{1}\rightarrow ...\rightarrow
j_{s}\right) \in \mathbf{J}}C\left( \varphi \left( j_{s}\right) \right) , \\
c &\longmapsto &c^{\prime }, \\
c^{\prime }\left( j_{0}\rightarrow j_{1}\rightarrow ...\rightarrow
j_{s}\right) &=&c\left( \varphi \left( j_{0}\right) \rightarrow \varphi
\left( j_{1}\right) \rightarrow ...\rightarrow \varphi \left( j_{s}\right)
\right)
\end{eqnarray*}%
induce the morphism of chain complexes%
\begin{equation*}
\varphi ^{\ast }:\underleftarrow{\mathbf{holim}}_{i\in \mathbf{I}}\mathbf{C}%
\longrightarrow \underleftarrow{\mathbf{holim}}_{j\in \mathbf{J}}\left( 
\mathbf{C}\circ \varphi \right) .
\end{equation*}
\end{definition}

\begin{remark}
Notice that we \textbf{do not} require that the index categories $\mathbf{I}$
and $\mathbf{J}$ are cofiltrant.
\end{remark}

\begin{theorem}
\label{Spectral-holimit}(spectral sequence of a homotopy inverse limit). Let 
$E_{r}^{st}$ be the spectral sequence from Theorem \ref{Spectral-bicomplex}
for the bicomplex $R\mathbf{C}$. Then:

\begin{enumerate}
\item \label{Spectral-holimit-E1}%
\begin{equation*}
E_{1}^{st}=\dprod\limits_{\left( i_{0}\rightarrow i_{1}\rightarrow
...\rightarrow i_{s}\right) \in \mathbf{I}}H_{-t}C\left( i_{s}\right) .
\end{equation*}

\item \label{Spectral-holimit-E2}%
\begin{equation*}
E_{2}^{st}=\underleftarrow{\lim }_{i\in \mathbf{I}}^{s}H_{-t}\left( C\left(
i\right) \right) .
\end{equation*}

\item \label{Spectral-holimit-zero-step}$D_{2}^{0,n}%
\simeq%
E_{2}^{0,n}%
\simeq%
\underleftarrow{\lim }_{i\in \mathbf{I}}H_{-n}\left( C\left( i\right)
\right) $.

\item \label{Spectral-holimit-exact-sequences}The groups $D_{2}^{st}$ are
included in long exact sequences%
\begin{eqnarray*}
0 &\longrightarrow &E_{2}^{1,n-1}\longrightarrow
D_{2}^{1,n-1}\longrightarrow D_{2}^{0,n}\longrightarrow \\
&\longrightarrow &E_{2}^{2,n-1}\longrightarrow D_{2}^{2,n-1}\longrightarrow
D_{2}^{1,n}\longrightarrow E_{2}^{3,n-1}\longrightarrow ... \\
... &\longrightarrow &E_{2}^{s,n-1}\longrightarrow
D_{2}^{s,n-1}\longrightarrow D_{2}^{s-1,n}\longrightarrow
E_{2}^{s+1,n-1}\longrightarrow ...
\end{eqnarray*}

\item \label{Spectral-holimit-lim1-Tot-lim-Tot}There are short exact
sequences ($n\in \mathbb{Z}$)%
\begin{equation*}
0\longrightarrow \underleftarrow{\lim }_{s}^{1}H_{n+1}Tot^{\left( s\right)
}\left( C\right) \longrightarrow H_{n}\left( \underleftarrow{\mathbf{holim}}%
_{i\in \mathbf{I}}~\mathbf{C}\right) \longrightarrow \underleftarrow{\lim }%
_{s}H_{n}Tot^{\left( s\right) }\left( C\right) \longrightarrow 0.
\end{equation*}

\item \label{Spectral-holimit-lim1-Dr-lim-Dr}There are short exact sequences
($r\geq 1,n\in \mathbb{Z}$)%
\begin{equation*}
0\longrightarrow \underleftarrow{\lim }_{s}^{1}D_{r}^{s,-n-s-1}%
\longrightarrow H_{n}\left( \underleftarrow{\mathbf{holim}}_{i\in \mathbf{I}%
}~\mathbf{C}\right) \longrightarrow \underleftarrow{\lim }%
_{s}D_{r}^{s,-n-s}\longrightarrow 0.
\end{equation*}

\item \label{Spectral-holimit-isomorphism}If $\mathbf{C}\longrightarrow 
\mathbf{C}^{\prime }$ is a morphism of functors inducing isomorphisms $%
E_{2}^{st}\left( \mathbf{C}\right) 
\simeq%
E_{2}^{st}\left( \mathbf{C}^{\prime }\right) $ for all $s$, $t$, then%
\begin{equation*}
H_{n}\left( \underleftarrow{\mathbf{holim}}_{i\in \mathbf{I}}~\mathbf{C}%
\right) 
\simeq%
H_{n}\left( \underleftarrow{\mathbf{holim}}_{i\in \mathbf{I}}~\mathbf{C}%
^{\prime }\right) .
\end{equation*}

\item \label{Spectral-holimit-cofinality}Let $\varphi :\mathbf{J}\rightarrow 
\mathbf{I}$ be a cofinal functor, and let 
\begin{equation*}
\varphi ^{\ast }:\underleftarrow{\mathbf{holim}}_{i\in \mathbf{I}}~\mathbf{C}%
\longrightarrow \underleftarrow{\mathbf{holim}}_{j\in \mathbf{J}}\left( 
\mathbf{C}\circ \varphi \right)
\end{equation*}%
be the corresponding morphism of complexes. Then $\varphi ^{\ast }$ is a
weak equivalence (induces an isomorphism of homologies).

\item \label{Spectral-holimit-convergence}If 
\begin{equation*}
\underleftarrow{\lim }_{r}^{1}~E_{r}^{st}=0
\end{equation*}%
then $E_{r}^{st}$ \textbf{completely} (in the sense of \cite%
{Bousfield-Kan-MR0365573}, IX.5.3) or \textbf{strongly} (in the sense of 
\cite{Boardman-MR1718076}, Definition 5.2) converges to $H_{-s-t}\left( 
\underleftarrow{\mathbf{holim}}_{i\in \mathbf{I}}~\mathbf{C}\right) $. The
latter means that $H_{n}\left( \underleftarrow{\mathbf{holim}}_{i\in \mathbf{%
I}}C\right) $ is isomorphic to an inverse limit%
\begin{equation*}
\underleftarrow{\lim }_{s}~\left( ...\longrightarrow
Q^{s,-n-s}\longrightarrow Q^{s-1,-n-s+1}\longrightarrow ...\longrightarrow
Q^{0,-n}\longrightarrow Q^{-1,-n+1}=0\right)
\end{equation*}%
of epimorphisms with the kernels%
\begin{equation*}
\ker \left( Q^{s,-n-s}\longrightarrow Q^{s-1,-n-s+1}\right) 
\simeq%
E_{\infty }^{s,-n-s}:=\underleftarrow{\lim }_{r}E_{r}^{s,-n-s}.
\end{equation*}
\end{enumerate}
\end{theorem}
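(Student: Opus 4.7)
The plan is to apply Theorem \ref{Spectral-bicomplex} directly to the cochain bicomplex $R\mathbf{C}$ from Definition \ref{Def-homotopy-limit} and then translate everything back by negating the index $t$, keeping in mind that $\underleftarrow{\mathbf{holim}}_{i\in\mathbf{I}}~\mathbf{C}$ was defined as $Tot(R\mathbf{C})$ with reversed degree convention. Parts (\ref{Spectral-holimit-zero-step}), (\ref{Spectral-holimit-exact-sequences}), (\ref{Spectral-holimit-lim1-Tot-lim-Tot}), (\ref{Spectral-holimit-lim1-Dr-lim-Dr}), (\ref{Spectral-holimit-isomorphism}), and (\ref{Spectral-holimit-convergence}) are then literal reformulations of the corresponding items in Theorem \ref{Spectral-bicomplex}; only (\ref{Spectral-holimit-E1}), (\ref{Spectral-holimit-E2}), and (\ref{Spectral-holimit-cofinality}) require genuine work.

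For (\ref{Spectral-holimit-E1}), I would invoke Theorem \ref{Spectral-bicomplex} (\ref{Spectral-bicomplex-E1}): $E_1^{st}$ equals the vertical cohomology of $R\mathbf{C}$ at position $(s,t)$. In the $s$-th column, the vertical differential $\delta$ acts separately on each factor indexed by a chain $(i_0\to\cdots\to i_s)$, and on each factor it is (up to the sign of negation) the chain differential $d_{-t}(i_s)$ of the complex $C_{\ast}(i_s)$. Because cohomology commutes with products of abelian groups, the vertical cohomology in bidegree $(s,t)$ is exactly $\prod_{(i_0\to\cdots\to i_s)} H_{-t}C(i_s)$.

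For (\ref{Spectral-holimit-E2}), by Theorem \ref{Spectral-bicomplex} (\ref{Spectral-bicomplex-E2}), $E_2^{st}$ is obtained by further taking horizontal cohomology. After vertical cohomology the horizontal differential on $E_1^{\ast,t}$ is the standard cosimplicial coboundary of the cosimplicial replacement of the functor $i\mapsto H_{-t}C(i)$. This is exactly the cochain complex whose cohomology computes the right derived functors of $\underleftarrow{\lim}_{i\in\mathbf{I}}$ (see \cite{Bousfield-Kan-MR0365573}, XI.6.2 and the description of $\underleftarrow{\lim}^s$ via the cosimplicial replacement). Hence $E_2^{st}\simeq \underleftarrow{\lim}^s_{i\in\mathbf{I}}~H_{-t}(C(i))$ as claimed.

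For (\ref{Spectral-holimit-cofinality}), the hard part, I would show that a cofinal functor $\varphi:\mathbf{J}\to\mathbf{I}$ induces an isomorphism on $E_2$-terms and then invoke (\ref{Spectral-holimit-isomorphism}). The classical fact (dual to \cite{Kashiwara-Categories-MR2182076}, Proposition 2.5.2, or equivalently \cite{Bousfield-Kan-MR0365573}, XI.9.2) is that cofinal functors induce isomorphisms on all higher inverse limits of abelian group valued functors, so $\underleftarrow{\lim}^s_{\mathbf{I}} H_{-t}(C) \simeq \underleftarrow{\lim}^s_{\mathbf{J}} H_{-t}(C\circ\varphi)$ naturally. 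Combined with part (\ref{Spectral-holimit-E2}), this gives the required isomorphism on $E_2$, and then (\ref{Spectral-holimit-isomorphism}) yields $H_n(\varphi^{\ast})$ is an isomorphism. The main obstacle I expect is not the identification of the spectral sequence per se, but checking carefully that the map $\varphi^{\ast}$ of bicomplexes really induces on the $E_2$ page the same map as the one coming from the classical cofinality isomorphism for $\underleftarrow{\lim}^s$; this is a naturality check on the level of cosimplicial replacements and is best done by comparing both maps to the common identification via the Bousfield--Kan spectral sequence for $\underleftarrow{\lim}^{\ast}$.
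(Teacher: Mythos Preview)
Your proposal is correct and follows essentially the same approach as the paper: reduce every item to the corresponding item of Theorem \ref{Spectral-bicomplex} applied to the bicomplex $R\mathbf{C}$ (with index negation), identify $E_2$ with derived limits via Bousfield--Kan XI.6.2, and for cofinality combine the classical fact that cofinal functors induce isomorphisms on all $\underleftarrow{\lim}^s$ (Bousfield--Kan XI.9.2) with the $E_2$-comparison item. The only nuance is that in part~(\ref{Spectral-holimit-cofinality}) the map $\varphi^{\ast}$ is a morphism of bicomplexes rather than a morphism of functors on a fixed index category, so strictly speaking one invokes Theorem \ref{Spectral-bicomplex}~(\ref{Spectral-bicomplex-isomorphism}) directly rather than part~(\ref{Spectral-holimit-isomorphism}) of the present theorem; the paper's proof makes the same shorthand.
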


\begin{proof}
~

\begin{enumerate}
\item Follows from Theorem \ref{Spectral-bicomplex} (\ref%
{Spectral-bicomplex-E1}).

\item Follows from Theorem \ref{Spectral-bicomplex} (\ref%
{Spectral-bicomplex-E2}) and Proposition XI.6.2 in \cite%
{Bousfield-Kan-MR0365573}.

\item Follows from Theorem \ref{Spectral-bicomplex} (\ref%
{Spectral-bicomplex-zero-step}).

\item Follows from Theorem \ref{Spectral-bicomplex} (\ref%
{Spectral-bicomplex-long-exact-sequences}).

\item Negate indices in Theorem \ref{Spectral-bicomplex} (\ref%
{Spectral-bicomplex-lim1-Tot-lim-Tot}).

\item Negate indices in Theorem \ref{Spectral-bicomplex} (\ref%
{Spectral-bicomplex-lim1-Dr-lim-Dr}).

\item Negate indices in Theorem \ref{Spectral-bicomplex} (\ref%
{Spectral-bicomplex-isomorphism}).

\item A cofinal functor induces an isomorphism of higher limits. This is a
rather well-known fact. It can be proven similarly to the Cofinality Theorem
(\cite{Bousfield-Kan-MR0365573}, XI.9.2). Moreover, the statement follows
from (\cite{Bousfield-Kan-MR0365573}, XI.9.2 and XI.7.2). Therefore%
\begin{equation*}
E_{2}^{st}\left( \underleftarrow{\mathbf{holim}}_{i\in \mathbf{I}}~\mathbf{C}%
\right) \longrightarrow E_{2}^{st}\left( \underleftarrow{\mathbf{holim}}%
_{j\in \mathbf{J}}\left( \mathbf{C}\circ \varphi \right) \right)
\end{equation*}%
is an isomorphism for all $s$, $t$. The desired isomorphism of homologies
follows from (\ref{Spectral-holimit-isomorphism}).

\item Negate indices in Theorem \ref{Spectral-bicomplex} (\ref%
{Spectral-bicomplex-convergence}).
\end{enumerate}
\end{proof}

\begin{definition}
\label{Def-holim-pro-CHAIN}Let $\mathbf{C}_{\ast }=\left( C_{\ast i}\right)
_{i\in \mathbf{I}}\in \mathbf{Pro}\left( \mathbf{CHAIN}\left( \mathbb{Z}%
\right) \right) $ be a pro-complex. Define its homotopy inverse limit $%
\underleftarrow{\mathbf{holim}}$ as follows:%
\begin{equation*}
\underleftarrow{\mathbf{holim}}~\mathbf{C}_{\ast }:=\underleftarrow{\mathbf{%
holim}}_{i\in \mathbf{I}}~C_{\ast i}.
\end{equation*}
\end{definition}

\begin{remark}
It follows from Proposition \ref{Prop-holim-is-well-defined} below that the
homotopy inverse limit of a pro-complex is well defined up to weak
equivalence.
\end{remark}

\begin{proposition}
\label{Prop-holim-is-well-defined}The homotopy inverse limit from Definition %
\ref{Def-holim-pro-CHAIN} is a well-defined functor%
\begin{equation*}
\underleftarrow{\mathbf{holim}}:\mathbf{Pro}\left( \mathbf{CHAIN}\left( 
\mathbb{Z}\right) \right) \longrightarrow \mathbf{Ho}\left( \mathbf{CHAIN}%
\left( \mathbb{Z}\right) \right)
\end{equation*}%
where $\mathbf{Ho}\left( \mathbf{CHAIN}\left( \mathbb{Z}\right) \right) $ is
the category of fractions of the category $\mathbf{CHAIN}\left( \mathbb{Z}%
\right) $ with respect to weak equivalences of complexes.
\end{proposition}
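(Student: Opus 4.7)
The plan is to combine Theorem \ref{Th-description-Pro(C)} with part (\ref{Spectral-holimit-cofinality}) of Theorem \ref{Spectral-holimit}. Concretely, I will first construct $\underleftarrow{\mathbf{holim}}$ on the level of inverse systems, then verify that cofinal morphisms are carried to isomorphisms in the homotopy category, and finally invoke the universal property of categories of fractions to descend the construction to $\mathbf{Pro}(\mathbf{CHAIN}(\mathbb{Z}))$.

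First I would check that Definition \ref{Def-homotopy-limit} already gives a functor
\begin{equation*}
\underleftarrow{\mathbf{holim}} \colon \mathbf{Inv}(\mathbf{CHAIN}(\mathbb{Z})) \longrightarrow \mathbf{CHAIN}(\mathbb{Z}).
\end{equation*}
Given a morphism $(\varphi,\psi)\colon \mathbf{X}\to \mathbf{Y}$ in $\mathbf{Inv}$ with $\mathbf{X}\colon \mathbf{I}\to \mathbf{CHAIN}(\mathbb{Z})$ and $\mathbf{Y}\colon \mathbf{J}\to \mathbf{CHAIN}(\mathbb{Z})$, one has the re-indexing map $\varphi^{\ast}\colon \underleftarrow{\mathbf{holim}}_{i\in \mathbf{I}}X_{i}\to \underleftarrow{\mathbf{holim}}_{j\in \mathbf{J}}(X_{\varphi(j)})$ together with the coefficient map $\psi_{\ast}\colon \underleftarrow{\mathbf{holim}}_{j\in \mathbf{J}}(X_{\varphi(j)})\to \underleftarrow{\mathbf{holim}}_{j\in \mathbf{J}}Y_{j}$, and I define the induced morphism as $\psi_{\ast}\circ\varphi^{\ast}$. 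Functoriality follows from the composition law of $\mathbf{Inv}$ in Definition \ref{Def-Inv(C)} together with the explicit formula for the differential in Definition \ref{Def-homotopy-limit}; the verification is a direct bookkeeping of the products indexed by strings $i_{0}\to\cdots\to i_{s}$.

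Composing with the localization functor $\mathbf{CHAIN}(\mathbb{Z})\to \mathbf{Ho}(\mathbf{CHAIN}(\mathbb{Z}))$ yields a functor $F\colon \mathbf{Inv}(\mathbf{CHAIN}(\mathbb{Z}))\to \mathbf{Ho}(\mathbf{CHAIN}(\mathbb{Z}))$. The key step is then: if $(\varphi,1)\colon \mathbf{X}\to \mathbf{X}\circ\varphi$ is cofinal in the sense preceding Theorem \ref{Th-description-Pro(C)} (so $\varphi$ is a cofinal functor and $\psi$ is the identity), then $\varphi^{\ast}$ is a weak equivalence of chain complexes by Theorem \ref{Spectral-holimit}(\ref{Spectral-holimit-cofinality}). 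Consequently $F$ sends every cofinal morphism to an isomorphism in $\mathbf{Ho}(\mathbf{CHAIN}(\mathbb{Z}))$.

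By Theorem \ref{Th-description-Pro(C)} the category $\mathbf{Pro}(\mathbf{CHAIN}(\mathbb{Z}))$ is equivalent to the category of fractions $\mathbf{Inv}(\mathbf{CHAIN}(\mathbb{Z}))[\Sigma^{-1}]$, where $\Sigma$ is the class of cofinal morphisms. The universal property of the localization therefore produces a unique (up to canonical natural isomorphism) functor
\begin{equation*}
\underleftarrow{\mathbf{holim}} \colon \mathbf{Pro}(\mathbf{CHAIN}(\mathbb{Z})) \longrightarrow \mathbf{Ho}(\mathbf{CHAIN}(\mathbb{Z}))
\end{equation*}
extending $F$, which coincides on objects with the assignment of Definition \ref{Def-holim-pro-CHAIN}. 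The main (and essentially only non-formal) obstacle is the invariance under cofinal re-indexing, and this is precisely what Theorem \ref{Spectral-holimit}(\ref{Spectral-holimit-cofinality}) supplies; all remaining steps are diagrammatic and follow directly from the universal property of categories of fractions.
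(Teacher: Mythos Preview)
Your proposal is correct and follows essentially the same approach as the paper: define $\underleftarrow{\mathbf{holim}}$ on $\mathbf{Inv}(\mathbf{CHAIN}(\mathbb{Z}))$, invoke Theorem \ref{Spectral-holimit}(\ref{Spectral-holimit-cofinality}) to show cofinal morphisms go to weak equivalences, and then use Theorem \ref{Th-description-Pro(C)} to descend to $\mathbf{Pro}(\mathbf{CHAIN}(\mathbb{Z}))$. The paper's proof is simply a terser version of exactly this argument.
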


\begin{proof}
The functor is well-defined on the category $\mathbf{Inv}\left( \mathbf{CHAIN%
}\left( \mathbb{Z}\right) \right) $. In order to use Theorem \ref%
{Th-description-Pro(C)}, one needs only to check that cofinal morphisms are
mapped into weak equivalences. The latter fact, however, follows from
Theorem \ref{Spectral-holimit} (\ref{Spectral-holimit-cofinality}).
\end{proof}

\begin{theorem}
\label{Th-Spectral-holimit-pro-complex}(spectral sequence of a pro-complex).
Let $\mathbf{C}\in \mathbf{Pro}\left( \mathbf{CHAIN}\left( \mathbb{Z}\right)
\right) $ be a chain pro-complex. Then there exists a spectral sequence $%
E_{r}^{st}\left( \mathbf{C}\right) $, natural on $\mathbf{C}$ from $E_{2}$
on, such that:

\begin{enumerate}
\item \label{Th-Spectral-holimit-pro-complex-E2}%
\begin{equation*}
E_{2}^{st}=\underleftarrow{\lim }^{s}~\mathbf{H}_{-t}\left( \mathbf{C}%
\right) .
\end{equation*}

\item \label{Th-Spectral-holimit-pro-complex-D2-E2}$D_{2}^{0,n}%
\simeq%
E_{2}^{0,n}%
\simeq%
\underleftarrow{\lim }\mathbf{H}_{-n}\left( \mathbf{C}\right) $.

\item \label{Th-Spectral-holimit-pro-complex-long-D2-E2}The groups $%
D_{2}^{st}$ are included in long exact sequences%
\begin{eqnarray*}
0 &\longrightarrow &E_{2}^{1,n-1}\longrightarrow
D_{2}^{1,n-1}\longrightarrow D_{2}^{0,n}\longrightarrow \\
&\longrightarrow &E_{2}^{2,n-1}\longrightarrow D_{2}^{2,n-1}\longrightarrow
D_{2}^{1,n}\longrightarrow E_{2}^{3,n-1}\longrightarrow ... \\
... &\longrightarrow &E_{2}^{s,n-1}\longrightarrow
D_{2}^{s,n-1}\longrightarrow D_{2}^{s-1,n}\longrightarrow
E_{2}^{s+1,n-1}\longrightarrow ...
\end{eqnarray*}

\item \label{Th-Spectral-holimit-pro-complex-short-lim-D2-lim1-D2}There are
short exact sequences ($r\geq 2,n\in \mathbb{Z}$)%
\begin{equation*}
0\longrightarrow \underleftarrow{\lim }_{r}^{1}D_{r}^{s,-n-s-1}%
\longrightarrow H_{n}\left( \underleftarrow{\mathbf{holim}}~\mathbf{C}%
\right) \longrightarrow \underleftarrow{\lim }_{r}D_{r}^{s,-n-s}%
\longrightarrow 0.
\end{equation*}

\item \label{Th-Spectral-holimit-pro-complex-isomorphism}If $\mathbf{C}%
\longrightarrow \mathbf{C}^{\prime }$ is a morphism of pro-complexes
inducing isomorphisms%
\begin{equation*}
E_{2}^{st}\left( \mathbf{C}\right) 
\simeq%
E_{2}^{st}\left( \mathbf{C}^{\prime }\right)
\end{equation*}%
for all $s$, $t$, then%
\begin{equation*}
H_{n}\left( \underleftarrow{\mathbf{holim}}~\mathbf{C}\right) 
\simeq%
H_{n}\left( \underleftarrow{\mathbf{holim}}~\mathbf{C}^{\prime }\right) .
\end{equation*}

\item \label{Th-Spectral-holimit-pro-complex-convergence}If 
\begin{equation*}
\underleftarrow{\lim }_{r}^{1}E_{r}^{st}=0
\end{equation*}%
then $E_{r}^{st}$ \textbf{completely} (in the sense of \cite%
{Bousfield-Kan-MR0365573}, IX.5.3) or \textbf{strongly} (in the sense of 
\cite{Boardman-MR1718076}, Definition 5.2) converges to $H_{-s-t}\left( 
\underleftarrow{\mathbf{holim}}~\mathbf{C}\right) $. The latter means that $%
H_{n}\left( \underleftarrow{\mathbf{holim}}~\mathbf{C}\right) 
\simeq%
$ 
\begin{equation*}
\underleftarrow{\lim }_{s}\left( ...\longrightarrow
Q^{s,-n-s}\longrightarrow Q^{s-1,-n-s+1}\longrightarrow ...\longrightarrow
Q^{0,-n}\longrightarrow Q^{-1,-n+1}=0\right)
\end{equation*}%
of epimorphisms with kernels%
\begin{equation*}
\ker \left( Q^{s,-n-s}\longrightarrow Q^{s-1,-n-s+1}\right) 
\simeq%
E_{\infty }^{s,-n-s}:=\underleftarrow{\lim }_{r}E_{r}^{s,-n-s}.
\end{equation*}
\end{enumerate}
\end{theorem}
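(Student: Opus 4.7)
The plan is to reduce the statement to Theorem \ref{Spectral-holimit} applied to a representing inverse system. First, choose any representing functor $\mathbf{C}:\mathbf{I}\to\mathbf{CHAIN}(\mathbb{Z})$ of the pro-complex (with $\mathbf{I}$ a small cofiltrant category) and apply Theorem \ref{Spectral-holimit} to obtain the bicomplex-spectral sequence $E_r^{st}$ together with its auxiliary groups $D_r^{st}$. By Definition \ref{Def-holim-pro-CHAIN}, this computes $H_{*}(\underleftarrow{\mathbf{holim}}\,\mathbf{C})$, so the convergence target of the theorem is already in the desired form.

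For item (\ref{Th-Spectral-holimit-pro-complex-E2}), Theorem \ref{Spectral-holimit} (\ref{Spectral-holimit-E2}) gives $E_2^{st}=\underleftarrow{\lim}{}^{s}_{i\in\mathbf{I}}\,H_{-t}(C(i))$. By Proposition \ref{Prop-pro-complex-pro} the pro-module $\mathbf{H}_{-t}(\mathbf{C})$ is represented by $(H_{-t}(C(i)))_{i\in\mathbf{I}}$, so the right-hand side is exactly $\underleftarrow{\lim}{}^{s}\,\mathbf{H}_{-t}(\mathbf{C})$, which is intrinsic to the pro-module (higher inverse limits of pro-modules are invariant under pro-isomorphism). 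This gives the intrinsic identification of the $E_2$-page, independent of the chosen representative.

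The main obstacle is the assertion of naturality from $E_2$ on, because the bicomplex $R\mathbf{C}$ and consequently $E_1^{st}=\prod_{(i_0\to\cdots\to i_s)}H_{-t}(C(i_s))$ genuinely depend on the indexing category $\mathbf{I}$. By Theorem \ref{Th-description-Pro(C)}, every morphism in $\mathbf{Pro}(\mathbf{CHAIN}(\mathbb{Z}))$ is a zig-zag in $\mathbf{Inv}(\mathbf{CHAIN}(\mathbb{Z}))$ obtained by inverting cofinal morphisms, so it suffices to check that a cofinal functor $\varphi:\mathbf{J}\to\mathbf{I}$ induces isomorphisms on $E_r^{st}$ and $D_r^{st}$ for every $r\geq 2$. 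At the $E_2$ level this is the classical fact that cofinal functors induce isomorphisms of all derived inverse limits (see \cite{Bousfield-Kan-MR0365573}, XI.9.2); combined with the iso-on-$E_2$ criterion of Theorem \ref{Spectral-holimit} (\ref{Spectral-holimit-isomorphism}), which together with the long exact sequences of the derived couple propagates isomorphisms to all $D_r$ and $E_r$ for $r\geq 2$, this finishes the naturality part.

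Items (\ref{Th-Spectral-holimit-pro-complex-D2-E2})--(\ref{Th-Spectral-holimit-pro-complex-convergence}) then follow directly from the corresponding parts of Theorem \ref{Spectral-holimit} by translating through Definition \ref{Def-holim-pro-CHAIN}: (\ref{Th-Spectral-holimit-pro-complex-D2-E2}) from (\ref{Spectral-holimit-zero-step}) together with the identification of $E_2^{0,n}$ already established; (\ref{Th-Spectral-holimit-pro-complex-long-D2-E2}) from (\ref{Spectral-holimit-exact-sequences}); (\ref{Th-Spectral-holimit-pro-complex-short-lim-D2-lim1-D2}) from (\ref{Spectral-holimit-lim1-Dr-lim-Dr}); (\ref{Th-Spectral-holimit-pro-complex-isomorphism}) from (\ref{Spectral-holimit-isomorphism}) (noting that the hypothesis is now phrased in terms of the intrinsic $E_2$-page, so no choice of representative is needed); and (\ref{Th-Spectral-holimit-pro-complex-convergence}) from (\ref{Spectral-holimit-convergence}). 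All of these transfer mechanically once the naturality issue above is settled.
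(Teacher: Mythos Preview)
Your proposal is correct and follows essentially the same approach as the paper: reduce to a representing inverse system, invoke Theorem \ref{Spectral-holimit}, and handle naturality via Theorem \ref{Th-description-Pro(C)} by checking that cofinal morphisms induce isomorphisms on $E_2$ (hence on all $E_r$, $D_r$ for $r\geq 2$). The only cosmetic difference is in item (\ref{Th-Spectral-holimit-pro-complex-isomorphism}): the paper re-runs the $5$-lemma argument directly on the now-intrinsic $D_2$, $E_2$ terms rather than citing Theorem \ref{Spectral-holimit} (\ref{Spectral-holimit-isomorphism}), since the latter is literally stated only for morphisms of functors over a fixed index category; your reduction via zig-zags accomplishes the same thing.
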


\begin{proof}
~

\begin{enumerate}
\item The formula for $E_{2}^{st}$ follows from Theorem \ref%
{Spectral-holimit} (\ref{Spectral-holimit-E2}). To check naturality, use
Theorem \ref{Th-description-Pro(C)} and the facts that both $\mathbf{H}_{-t}$
and $\lim^{s}$ map cofinal morphisms into isomorphisms.

\item Follows from Theorem \ref{Spectral-holimit} (\ref%
{Spectral-holimit-zero-step}).

\item Follows from Theorem \ref{Spectral-holimit} (\ref%
{Spectral-holimit-exact-sequences}).

\item Follows from Theorem \ref{Spectral-holimit} (\ref%
{Spectral-holimit-lim1-Dr-lim-Dr}).

\item It follows from (\ref{Spectral-holimit-zero-step}) that 
\begin{equation*}
D_{2}^{0,n}\left( \mathbf{C}\right) 
\simeq%
E_{2}^{0,n}\left( \mathbf{C}\right) 
\simeq%
E_{2}^{0,n}\left( \mathbf{C}^{\prime }\right) 
\simeq%
D_{2}^{0,n}\left( \mathbf{C}^{\prime }\right) .
\end{equation*}%
The morphism $\mathbf{C}\rightarrow \mathbf{C}^{\prime }$ induces morphisms
of exact sequences from (\ref{Spectral-holimit-exact-sequences}) for $%
\mathbf{C}$ and $\mathbf{C}^{\prime }$. Applying the $5$-lemma several
times, one gets 
\begin{equation*}
D_{2}^{s,t}\left( \mathbf{C}\right) 
\simeq%
D_{2}^{s,t}\left( \mathbf{C}^{\prime }\right) .
\end{equation*}%
There is a morphism (which is an isomorphism in the second and the forth
term) of short exact sequences from (\ref{Spectral-holimit-lim1-Dr-lim-Dr})
for $r=2$. Using again the $5$-lemma, one gets the desired isomorphism 
\begin{equation*}
H^{n}\left( \underleftarrow{\mathbf{holim}}~\mathbf{C}\right) 
\simeq%
H^{n}\left( \underleftarrow{\mathbf{holim}}~\mathbf{C}^{\prime }\right) .
\end{equation*}

\item Follows from Theorem \ref{Spectral-holimit} (\ref%
{Spectral-holimit-convergence}).
\end{enumerate}
\end{proof}

\section{Shape homology}

\subsection{Pro-homology}

Let $\mathbf{Pro}\left( \mathbf{TOP}\right) $, $\mathbf{Pro}\left( \mathbf{%
HTOP}\right) $, $\mathbf{Pro}\left( \mathbf{POL}\right) $, and $\mathbf{Pro}%
\left( \mathbf{HPOL}\right) $ be the pro-categories from Example \ref%
{Ex-pro-categories} (\ref{Ex-Pro(TOP)}).

\begin{definition}
\label{Def-pro-homology}Let $G$ be an abelian group. Given a pro-space $%
\mathbf{X=}\left( X_{i}\right) _{i\in \mathbf{I}}\in \mathbf{Pro}\left( 
\mathbf{TOP}\right) $, or a pro-homotopy type $\mathbf{X=}\left(
X_{i}\right) _{i\in \mathbf{I}}\in \mathbf{Pro}\left( \mathbf{HTOP}\right) $%
, define%
\begin{equation*}
\mathbf{C}_{\ast }\left( \mathbf{X},G\right) =\left( C_{\ast }\left(
X_{i},G\right) \right) _{i\in \mathbf{I}}
\end{equation*}%
to be the corresponding chain \textbf{pro-complex} (see Example \ref%
{Ex-pro-categories} (\ref{Ex-Pro(CHAIN)})) if $\mathbf{X}\in \mathbf{Pro}%
\left( \mathbf{TOP}\right) $ or a \textbf{family} of chain complexes (if $%
\mathbf{X}\in \mathbf{Pro}\left( \mathbf{HTOP}\right) $) where $C_{\ast
}\left( X_{i},G\right) $ is the singular chain complex for $X_{i}$ with
coefficients in $G$. Let $\mathbf{H}_{n}\left( \mathbf{X},G\right) $ be the
corresponding pro-homology group (or a family of abelian groups):%
\begin{equation*}
\mathbf{H}_{n}\left( \mathbf{X},G\right) =\left( H_{n}\left( C_{\ast }\left(
X_{i},G\right) \right) \right) _{i\in \mathbf{I}}.
\end{equation*}%
For a topological space $X$, let $X\rightarrow \mathbf{X=}\left(
X_{i}\right) _{i\in \mathbf{I}}$ be an $HPol$-expansion (\cite%
{Mardesic-Segal-MR676973}, \S I.2.1), or a strong polyhedral expansion (\cite%
{Mardesic-MR1740831}, \S 7.1.), i.e. $\mathbf{X}\in \mathbf{Pro}\left( 
\mathbf{HPOL}\right) $ or $\mathbf{X}\in \mathbf{Pro}\left( \mathbf{POL}%
\right) $. Let finally 
\begin{equation*}
\mathbf{H}_{n}\left( X,G\right) :=\mathbf{H}_{n}\left( \mathbf{X},G\right) .
\end{equation*}%
It follows from \cite{Mardesic-Segal-MR676973}, \S II.3.1, that $\mathbf{H}%
_{n}\left( \mathbf{X},G\right) $ are well defined abelian pro-groups that do
not depend on the choice of an expansion $\mathbf{X}$.
\end{definition}

\subsection{Strong homology}

We define strong homology as in \cite{Prasolov-Extraordinatory-MR1821856},
Definition 3.1.3. The definition is equivalent to that in \cite%
{Mardesic-MR1740831}, Chapter 17 and 18, see \cite%
{Prasolov-Extraordinatory-MR1821856}, Theorem 3(a).

\begin{definition}
\label{Def-strong-homology}Given a pro-space $\mathbf{X=}\left( X_{i}\right)
_{i\in \mathbf{I}}\in \mathbf{Pro}\left( \mathbf{TOP}\right) $, let 
\begin{equation*}
\mathbf{C}_{\ast }\left( \mathbf{X},G\right) =\left( C_{\ast }\left(
X_{i},G\right) \right) _{i\in \mathbf{I}}\in \mathbf{Pro}\left( \mathbf{CHAIN%
}\left( \mathbb{Z}\right) \right)
\end{equation*}%
be the corresponding singular chain pro-complex from Definition \ref%
{Def-pro-homology}. Define%
\begin{equation*}
\overline{H}_{n}\left( \mathbf{X},G\right) :=H_{n}\left( \underleftarrow{%
\mathbf{holim}}\left( \mathbf{C}_{\ast }\left( \mathbf{X},G\right) \right)
\right)
\end{equation*}%
where $\underleftarrow{\mathbf{holim}}$ is the homotopy inverse limit from
Definition \ref{Def-holim-pro-CHAIN}. Given a topological space $X$, let $%
X\rightarrow \mathbf{X=}\left( X_{i}\right) _{i\in \mathbf{I}}$ be a strong
polyhedral expansion. The homology of $\underleftarrow{\mathbf{holim}}\left( 
\mathbf{C}_{\ast }\left( \mathbf{X},G\right) \right) $ is called the strong
homology of $X$ with coefficients in $G$:%
\begin{equation*}
\overline{H}_{n}\left( X,G\right) :=H_{n}\left( \underleftarrow{\mathbf{holim%
}}\left( \mathbf{C}_{\ast }\left( \mathbf{X},G\right) \right) \right) .
\end{equation*}
\end{definition}

\begin{remark}
Strong homology is strong shape invariant (see \cite{Mardesic-MR1740831},
Theorem 18.12). Compare with Proposition \ref{Prop-invariance} (\ref%
{Prop-invariance-balanced-strong}).
\end{remark}

\begin{remark}
$\overline{H}_{n}$ is defined for all $n\in \mathbb{Z}$ (the negative values
of $n$ included).
\end{remark}

\begin{theorem}
\label{Th-Spectral-strong-homology}(Spectral sequence for strong homology).
Let $X$ be a topological space as an object of the strong shape category $%
\mathbf{SSh}$. Then there exists a spectral sequence $E_{r}^{st}\left(
X\right) $, natural on $X\in \mathbf{SSh}$ from $E_{2}$ on, such that:

\begin{enumerate}
\item \label{Th-Spectral-strong-homology-E2}%
\begin{equation*}
E_{2}^{st}=\underleftarrow{\lim }^{s}~\mathbf{H}_{-t}\left( X,G\right) .
\end{equation*}

\item \label{Th-Spectral-strong-homology-zero-step}$D_{2}^{0,n}%
\simeq%
E_{2}^{0,n}%
\simeq%
\check{H}_{-n}\left( X,G\right) $ where $\check{H}_{\ast }$ is 
\u{C}ech \ 
homology.

\item \label{Th-Spectral-strong-homology-exact-sequences}The groups $%
D_{2}^{st}$ are included in long exact sequences%
\begin{eqnarray*}
0 &\longrightarrow &E_{2}^{1,n-1}\longrightarrow
D_{2}^{1,n-1}\longrightarrow D_{2}^{0,n}\longrightarrow \\
&\longrightarrow &E_{2}^{2,n-1}\longrightarrow D_{2}^{2,n-1}\longrightarrow
D_{2}^{1,n}\longrightarrow E_{2}^{3,n-1}\longrightarrow ... \\
... &\longrightarrow &E_{2}^{s,n-1}\longrightarrow
D_{2}^{s,n-1}\longrightarrow D_{2}^{s-1,n}\longrightarrow
E_{2}^{s+1,n-1}\longrightarrow ...
\end{eqnarray*}

\item \label{Th-Spectral-strong-homology-lim1-Dr-lim-Dr}There are short
exact sequences ($r\geq 2,n\in \mathbb{Z}$)%
\begin{equation*}
0\longrightarrow \underleftarrow{\lim }_{r}^{1}D_{r}^{s,-n-s+1}%
\longrightarrow \overline{H}_{n}\left( X,G\right) \longrightarrow 
\underleftarrow{\lim }_{r}D_{r}^{s,-n-s}\longrightarrow 0.
\end{equation*}

\item \label{Th-Spectral-strong-homology-convergence}If 
\begin{equation*}
\underleftarrow{\lim }_{r}^{1}~E_{r}^{st}=0
\end{equation*}%
then $E_{r}^{st}$ \textbf{completely} (in the sense of \cite%
{Bousfield-Kan-MR0365573}, IX.5.3) or \textbf{strongly} (in the sense of 
\cite{Boardman-MR1718076}, Definition 5.2) converges to $\overline{H}%
_{-n}\left( X,G\right) $. The latter means that $\overline{H}_{-n}\left(
X,G\right) 
\simeq%
$ 
\begin{equation*}
\underleftarrow{\lim }_{s}\left( ...\longrightarrow
Q^{s,-n-s}\longrightarrow Q^{s-1,-n-s+1}\longrightarrow ...\longrightarrow
Q^{0,-n}\longrightarrow Q^{-1,-n+1}=0\right)
\end{equation*}%
of epimorphisms with kernels%
\begin{equation*}
\ker \left( Q^{s,-n-s}\longrightarrow Q^{s-1,-n-s+1}\right) 
\simeq%
E_{\infty }^{s,-n-s}:=\underleftarrow{\lim }_{r}E_{r}^{s,-n-s}.
\end{equation*}
\end{enumerate}
\end{theorem}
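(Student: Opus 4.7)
The plan is to specialize Theorem \ref{Th-Spectral-holimit-pro-complex} to the singular chain pro-complex arising from a strong polyhedral expansion of $X$. Concretely, I would fix such an expansion $X \to \mathbf{X} = (X_i)_{i \in \mathbf{I}}$ and form the pro-complex
\[
\mathbf{C}_{\ast}(\mathbf{X}, G) = (C_{\ast}(X_i, G))_{i \in \mathbf{I}} \in \mathbf{Pro}(\mathbf{CHAIN}(\mathbb{Z})),
\]
so that, by Definition \ref{Def-strong-homology}, $\overline{H}_{n}(X, G) = H_{n}(\underleftarrow{\mathbf{holim}}~\mathbf{C}_{\ast}(\mathbf{X}, G))$. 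Applying Theorem \ref{Th-Spectral-holimit-pro-complex} to this pro-complex immediately yields items (\ref{Th-Spectral-strong-homology-E2}), (\ref{Th-Spectral-strong-homology-exact-sequences}), (\ref{Th-Spectral-strong-homology-lim1-Dr-lim-Dr}) and (\ref{Th-Spectral-strong-homology-convergence}); in particular
\[
E_{2}^{st} = \underleftarrow{\lim}{}^{s}~\mathbf{H}_{-t}(\mathbf{C}_{\ast}(\mathbf{X}, G)) = \underleftarrow{\lim}{}^{s}~\mathbf{H}_{-t}(X, G)
\]
by Definition \ref{Def-pro-homology}.

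For item (\ref{Th-Spectral-strong-homology-zero-step}), the isomorphism $D_{2}^{0,n} \simeq E_{2}^{0,n}$ is a direct consequence of Theorem \ref{Th-Spectral-holimit-pro-complex} (\ref{Th-Spectral-holimit-pro-complex-D2-E2}). For the identification with \v{C}ech homology, I would invoke the standard definition, namely that $\check{H}_{n}(X,G)$ with coefficients in an abelian group is the inverse limit of the pro-homology of any polyhedral (equivalently, any $HPol$- or strong polyhedral) expansion, so that
\[
\check{H}_{-n}(X, G) = \underleftarrow{\lim}~\mathbf{H}_{-n}(X, G) = E_{2}^{0,n}.
\]

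The main obstacle will be verifying naturality on the strong shape category $\mathbf{SSh}$ from $E_{2}$ on (this also subsumes independence of the choice of expansion). By Theorem \ref{Th-description-SSh}, $\mathbf{SSh}$ is a full subcategory of $\mathbf{Pro}(\mathbf{POL})[\Sigma^{-1}]$ where $\Sigma$ is the class of special morphisms. So it will suffice to prove that the assignment $\mathbf{X} \longmapsto \mathbf{C}_{\ast}(\mathbf{X}, G)$ sends every special morphism to a morphism in $\mathbf{Pro}(\mathbf{CHAIN}(\mathbb{Z}))$ which induces isomorphisms on the $E_{2}$-page, since then Theorem \ref{Th-Spectral-holimit-pro-complex} (\ref{Th-Spectral-holimit-pro-complex-isomorphism}) and the universal property of the category of fractions produce a well-defined spectral-sequence functor on $\mathbf{SSh}$.

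By Proposition \ref{Prop-special-morphisms}, every special morphism factors as a cofinal morphism followed by a level equivalence. Level equivalences are levelwise homotopy equivalences of polyhedra, which induce levelwise chain homotopy equivalences on singular chain complexes and hence level isomorphisms on $\mathbf{H}_{-t}(-, G)$; passing to $\underleftarrow{\lim}{}^{s}$ gives isomorphisms on $E_{2}^{st}$. For cofinal morphisms $\varphi : \mathbf{J} \to \mathbf{I}$, the induced maps on pro-homology and on all higher derived limits are isomorphisms by the same cofinality argument used in Theorem \ref{Spectral-holimit} (\ref{Spectral-holimit-cofinality}). Both building blocks of a special morphism therefore induce $E_{2}$-isomorphisms, which completes naturality on $\mathbf{SSh}$ and, as a byproduct, shows the spectral sequence does not depend on the chosen strong polyhedral expansion.
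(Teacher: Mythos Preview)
Your proof is correct and follows essentially the same route as the paper: apply Theorem~\ref{Th-Spectral-holimit-pro-complex} to the singular chain pro-complex of a strong polyhedral expansion, and establish naturality on $\mathbf{SSh}$ via Theorem~\ref{Th-description-SSh} by checking that cofinal morphisms and level equivalences each induce $E_{2}$-isomorphisms. The only cosmetic difference is that for cofinal morphisms the paper observes directly that $\mathbf{C}_{\ast}(\mathbf{X},G)\to\mathbf{C}_{\ast}(\mathbf{X}',G)$ is already an isomorphism in $\mathbf{Pro}(\mathbf{CHAIN}(\mathbb{Z}))$, whereas you appeal to Theorem~\ref{Spectral-holimit}~(\ref{Spectral-holimit-cofinality}); both arguments are valid.
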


\begin{proof}
Most statements of this Theorem were proved in \cite%
{Prasolov-Spectral-MR1027513}. Let $X\rightarrow \mathbf{X=}\left(
X_{i}\right) _{i\in \mathbf{I}}$ be a strong polyhedral expansion. Apply
Theorem \ref{Th-Spectral-holimit-pro-complex} to the pro-complex $\left(
C_{\ast }\left( X_{i},G\right) \right) _{i\in \mathbf{I}}$ where $C_{\ast
}\left( X_{i},G\right) $ is the singular complex for $X_{i}$ with
coefficients in $G$.

To check that the spectral sequence is natural on $E_{2}$ on, apply Theorem %
\ref{Th-description-SSh}. It is enough to check that both cofinal morphisms
and level equivalences induce an isomorphism on $E_{2}$.

If $\mathbf{X}\rightarrow \mathbf{X}^{\prime }$ is cofinal, then the
corresponding morphism $\mathbf{C}_{\ast }\left( \mathbf{X},G\right)
\rightarrow \mathbf{C}_{\ast }\left( \mathbf{X}^{\prime },G\right) $ is
isomorphisms in $\mathbf{Pro}\left( \mathbf{CHAIN}\left( \mathbb{Z}\right)
\right) $, and induces therefore an isomorphism 
\begin{equation*}
E_{2}^{st}\left( \mathbf{X}\right) =\lim\nolimits^{s}\mathbf{H}_{-t}\left( 
\mathbf{X},G\right) \longrightarrow E_{2}^{st}\left( \mathbf{X}^{\prime
}\right) =\lim\nolimits^{s}\mathbf{H}_{-t}\left( \mathbf{X}^{\prime
},G\right) .
\end{equation*}

Finally, if $\mathbf{X=}\left( X_{i}\right) _{i\in \mathbf{I}}\rightarrow 
\mathbf{X}^{\prime }\mathbf{=}\left( X_{i}^{\prime }\right) _{i\in \mathbf{I}%
}$ is a level equivalence, the homomorphisms $C_{\ast }\left( X_{i},G\right)
\rightarrow C_{\ast }\left( X_{i}^{\prime },G\right) $ are weak
equivalences. Hence, $\mathbf{H}_{-t}\left( \mathbf{X},G\right) \rightarrow 
\mathbf{H}_{-t}\left( \mathbf{X}^{\prime },G\right) $ and $E_{2}^{st}\left( 
\mathbf{X}\right) \rightarrow E_{2}^{st}\left( \mathbf{X}^{\prime }\right) $
are isomorphisms, as desired.
\end{proof}

\subsection{Balanced homology}

\begin{definition}
\label{Def-balanced-pro-homology}Given a pro-space $\mathbf{X=}\left(
X_{i}\right) _{i\in \mathbf{I}}\in \mathbf{Pro}\left( \mathbf{TOP}\right) $,
let $\mathbf{C}_{\ast }^{b}\left( \mathbf{X},G\right) $ be the tensor
product in the sense of Theorem \ref{Th-tensor-product} 
\begin{equation*}
\mathbf{C}_{\ast }^{b}\left( \mathbf{X},G\right) :=\mathbf{C}_{\ast }\left( 
\mathbf{X},\mathbb{Z}\right) \otimes _{\mathbb{Z}}G\in \mathbf{Pro}\left( 
\mathbf{CHAIN}\left( \mathbb{Z}\right) \right) .
\end{equation*}%
where $\mathbf{C}_{\ast }\left( \mathbf{X},\mathbb{Z}\right) $ is the
pro-complex from Definition \ref{Def-pro-homology}. $\mathbf{C}_{\ast
}^{b}\left( \mathbf{X},G\right) $ can be represented by an inverse system%
\begin{equation*}
\mathbf{C}_{\ast }^{b}\left( \mathbf{X},G\right) =\left( \left( Y_{\ast
}\right) _{j}\right) _{j\in \mathbf{J}}.
\end{equation*}%
Let $\mathbf{H}_{n}^{b}\left( \mathbf{X},G\right) $ be the corresponding
pro-homology group:%
\begin{equation*}
\mathbf{H}_{n}^{b}\left( \mathbf{X},G\right) :=\left( H_{n}\left(
Y_{j}\right) \right) _{j\in \mathbf{J}}.
\end{equation*}%
Given a topological space $X$, let $X\rightarrow \mathbf{X=}\left(
X_{i}\right) _{i\in \mathbf{I}}$ be a strong polyhedral expansion. Define%
\begin{equation*}
\mathbf{H}_{n}^{b}\left( X,G\right) :=\mathbf{H}_{n}^{b}\left( \mathbf{X}%
,G\right) .
\end{equation*}
\end{definition}

\begin{remark}
Strictly speaking, the items $Y_{n}$ above are abelian pro-groups that are
represented by \textbf{different} index categories $J_{n}$:%
\begin{equation*}
\mathbf{C}_{n}^{b}\left( \mathbf{X},G\right) =\left( \left( Y_{n}\right)
_{j}\right) _{j\in \mathbf{J}_{n}}.
\end{equation*}%
However, Remark \ref{Rem-tensor-product-complexes} guarantees that the index
categories $\mathbf{J}_{n}$ can be chosen to be equal, and the differentials
to be level morphisms (Definition \ref{Def-level-morphism}).
\end{remark}

\begin{definition}
\label{Def-balanced-strong-homology}Given a pro-space $\mathbf{X=}\left(
X_{i}\right) _{i\in \mathbf{I}}\in \mathbf{Pro}\left( \mathbf{TOP}\right) $,
define%
\begin{equation*}
\overline{H}_{n}^{b}\left( \mathbf{X},G\right) :=H_{n}\left( \underleftarrow{%
\mathbf{holim}}\left( \mathbf{C}_{\ast }^{b}\left( \mathbf{X},G\right)
\right) \right)
\end{equation*}%
where the complex in the brackets is the homotopy inverse limit from
Definition \ref{Def-holim-pro-CHAIN}. Given a topological space $X$, let $%
X\rightarrow \mathbf{X=}\left( X_{i}\right) _{i\in I}$ be a strong
polyhedral expansion. Define%
\begin{equation*}
\overline{H}_{n}^{b}\left( X,G\right) :=\overline{H}_{n}^{b}\left( \mathbf{X}%
,G\right) .
\end{equation*}
\end{definition}

\begin{remark}
Due to Theorem \ref{Th-map-to-weak-products}, there are natural (on $X$ and $%
G$) morphisms 
\begin{equation*}
\mathbf{C}_{\ast }^{b}\left( \mathbf{X},G\right) =\mathbf{C}_{\ast }\left( 
\mathbf{X},\mathbb{Z}\right) \otimes _{\mathbb{Z}}G\longrightarrow \mathbf{C}%
_{\ast }\left( \mathbf{X},\mathbb{Z}\right) \widetilde{\otimes }_{\mathbb{Z}%
}G=\mathbf{C}_{\ast }\left( \mathbf{X},G\right)
\end{equation*}%
of pro-complexes inducing the morphisms $\mathbf{H}_{n}^{b}\left( X,G\right)
\rightarrow \mathbf{H}_{n}\left( X,G\right) $ and $\overline{H}%
_{n}^{b}\left( X,G\right) \rightarrow \overline{H}_{n}\left( X,G\right) $.
\end{remark}

Let us remind that $\mathbf{FAB}$ is the class of finitely generated abelian
groups.

\begin{proposition}
\label{Prop-invariance}~

\begin{enumerate}
\item \label{Prop-invariance-balanced-pro}Balanced pro-homology is strong
shape invariant.

\item \label{Prop-invariance-compare-pro}The mappings $\mathbf{H}%
_{n}^{b}\left( X,G\right) \rightarrow \mathbf{H}_{n}\left( X,G\right) $ are
isomorphisms if $G\in \mathbf{FAB}$.

\item \label{Prop-invariance-balanced-strong}Balanced strong homology is
strong shape invariant.

\item \label{Prop-invariance-compare-strong}The mappings $\overline{H}%
_{n}^{b}\left( X,G\right) \rightarrow \overline{H}_{n}\left( X,G\right) $
are isomorphisms if $G\in \mathbf{FAB}$.
\end{enumerate}
\end{proposition}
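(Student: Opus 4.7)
The plan is to address the four items in two groups. Items (\ref{Prop-invariance-compare-pro}) and (\ref{Prop-invariance-compare-strong}) will follow by a direct application of Theorem \ref{Th-map-to-weak-products}, while items (\ref{Prop-invariance-balanced-pro}) and (\ref{Prop-invariance-balanced-strong}) will be reduced via Theorem \ref{Th-description-SSh} to two separate checks: that both cofinal morphisms and level equivalences are sent to isomorphisms.

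For (\ref{Prop-invariance-compare-pro}) and (\ref{Prop-invariance-compare-strong}): any $G\in\mathbf{FAB}$ is finitely presented (since $\mathbb{Z}$ is noetherian), so Theorem \ref{Th-map-to-weak-products} produces a natural isomorphism $\mathbf{P}\otimes_{\mathbb{Z}}G\simeq\mathbf{P}\widetilde{\otimes}_{\mathbb{Z}}G$ for every pro-module $\mathbf{P}$. Applying this degreewise to the singular chain pro-complex and using the classical identity $C_{n}(X_{i},\mathbb{Z})\otimes_{\mathbb{Z}}G\cong C_{n}(X_{i},G)$, one obtains an isomorphism of pro-complexes
\[
\mathbf{C}_{\ast}^{b}(\mathbf{X},G)=\mathbf{C}_{\ast}(\mathbf{X},\mathbb{Z})\otimes_{\mathbb{Z}}G\;\simeq\;\mathbf{C}_{\ast}(\mathbf{X},\mathbb{Z})\widetilde{\otimes}_{\mathbb{Z}}G=\mathbf{C}_{\ast}(\mathbf{X},G).
\]
Taking pro-homology and invoking Proposition \ref{Prop-pro-complex-pro} yields (\ref{Prop-invariance-compare-pro}); applying $H_{n}\circ\underleftarrow{\mathbf{holim}}$ together with Proposition \ref{Prop-holim-is-well-defined} yields (\ref{Prop-invariance-compare-strong}).

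For (\ref{Prop-invariance-balanced-pro}): by Theorem \ref{Th-description-SSh} and Proposition \ref{Prop-special-morphisms}, strong shape invariance reduces to showing that both cofinal morphisms and level equivalences are sent to isomorphisms by $\mathbf{H}_{n}^{b}(\_,G)$. A cofinal morphism $\mathbf{X}\to\mathbf{Y}$ is an isomorphism in $\mathbf{Pro}(\mathbf{TOP})$, hence in $\mathbf{Pro}(\mathbf{CHAIN}(\mathbb{Z}))$ after taking singular chains; because $\_\otimes_{\mathbb{Z}}G$ is a functor (Theorem \ref{Th-tensor-product}), it remains an isomorphism of pro-complexes after tensoring, and then so does its pro-homology. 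For a level equivalence $\mathbf{X}\to\mathbf{Y}$, each $C_{\ast}(X_{i},\mathbb{Z})\to C_{\ast}(Y_{i},\mathbb{Z})$ is a chain homotopy equivalence, so $\mathbf{H}_{n}(\mathbf{X},\mathbb{Z})\to\mathbf{H}_{n}(\mathbf{Y},\mathbb{Z})$ is a level isomorphism in $\mathbf{Pro}(\mathbb{Z})$. Since every $C_{n}(X_{i},\mathbb{Z})$ is free, $\mathbf{C}_{n}(\mathbf{X},\mathbb{Z})$ is quasi-projective (Proposition \ref{Prop-quasi-projective}), and Theorem \ref{Th-UCF-pro-complexes} supplies natural short exact sequences relating $\mathbf{H}_{n}^{b}$, $\mathbf{H}_{\ast}(\_,\mathbb{Z})\otimes_{\mathbb{Z}}G$ and $\mathbf{Tor}_{1}^{\mathbb{Z}}(\mathbf{H}_{n-1}(\_,\mathbb{Z}),G)$; the five lemma then forces $\mathbf{H}_{n}^{b}(\mathbf{X},G)\simeq\mathbf{H}_{n}^{b}(\mathbf{Y},G)$. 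For (\ref{Prop-invariance-balanced-strong}) the same reduction applies; cofinal morphisms are handled identically (invoking Proposition \ref{Prop-holim-is-well-defined}), and for level equivalences item (\ref{Prop-invariance-balanced-pro}) delivers $\mathbf{H}_{-t}^{b}(\mathbf{X},G)\simeq\mathbf{H}_{-t}^{b}(\mathbf{Y},G)$, whence $\underleftarrow{\lim}^{s}\mathbf{H}_{-t}^{b}(\mathbf{X},G)\simeq\underleftarrow{\lim}^{s}\mathbf{H}_{-t}^{b}(\mathbf{Y},G)$ for all $s,t$; these are precisely the $E_{2}^{st}$ terms of the spectral sequence of Theorem \ref{Th-Spectral-holimit-pro-complex}, so clause (\ref{Th-Spectral-holimit-pro-complex-isomorphism}) of that theorem provides $\overline{H}_{n}^{b}(\mathbf{X},G)\simeq\overline{H}_{n}^{b}(\mathbf{Y},G)$.

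The most delicate point is that, as Remark \ref{Rem-tensor-product-complexes} makes clear, the index category representing $\mathbf{C}_{\ast}^{b}(\mathbf{X},G)$ is in general different from the one representing $\mathbf{C}_{\ast}(\mathbf{X},\mathbb{Z})$, so one cannot work naively levelwise. The argument must instead be phrased abstractly inside $\mathbf{Pro}(\mathbb{Z})$ (using that $\otimes_{\mathbb{Z}}$ is a functor there) and inside $\mathbf{Ho}(\mathbf{CHAIN}(\mathbb{Z}))$ (using Proposition \ref{Prop-holim-is-well-defined}); once this is done, everything reduces to packaging the machinery already assembled.
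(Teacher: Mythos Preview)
Your proposal is correct and follows essentially the same route as the paper's proof: items (\ref{Prop-invariance-compare-pro}) and (\ref{Prop-invariance-compare-strong}) via Theorem \ref{Th-map-to-weak-products} (finitely generated over $\mathbb{Z}$ implies finitely presented), and items (\ref{Prop-invariance-balanced-pro}) and (\ref{Prop-invariance-balanced-strong}) via Theorem \ref{Th-description-SSh} and Proposition \ref{Prop-special-morphisms}, treating cofinal morphisms and level equivalences separately, with the level-equivalence case handled through quasi-projectivity of singular chains, Theorem \ref{Th-UCF-pro-complexes}, and the five lemma. The only cosmetic difference is that for part (\ref{Prop-invariance-balanced-strong}) you invoke Proposition \ref{Prop-holim-is-well-defined} and Theorem \ref{Th-Spectral-holimit-pro-complex}(\ref{Th-Spectral-holimit-pro-complex-isomorphism}), whereas the paper cites the equivalent diagram-level statements Theorem \ref{Spectral-holimit}(\ref{Spectral-holimit-cofinality}) and (\ref{Spectral-holimit-isomorphism}); these are the same results in slightly different packaging.
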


\begin{proof}
~

\begin{enumerate}
\item The functor 
\begin{equation*}
\mathbf{X}\longmapsto \mathbf{C}_{\ast }\left( \mathbf{X},\mathbb{Z}\right)
:?\longrightarrow \mathbf{Pro}\left( \mathbf{CHAIN}\left( \mathbb{Z}\right)
\right)
\end{equation*}%
is evidently defined on the category $\mathbf{Inv}\left( \mathbf{TOP}\right) 
$ (see Definition \ref{Def-Inv(C)}), therefore also on the category $\mathbf{%
Inv}\left( \mathbf{POL}\right) $. Let $G\in \mathbf{Mod}\left( \mathbb{Z}%
\right) $. Applying the tensor product from Theorem \ref{Th-tensor-product},
one gets a functor%
\begin{equation*}
\mathbf{X}\longmapsto \mathbf{C}_{\ast }^{b}\left( \mathbf{X},G\right) :=%
\mathbf{C}_{\ast }\left( \mathbf{X},\mathbb{Z}\right) \otimes _{\mathbb{Z}}G:%
\mathbf{Inv}\left( \mathbf{POL}\right) \longrightarrow \mathbf{Pro}\left( 
\mathbf{CHAIN}\left( \mathbb{Z}\right) \right)
\end{equation*}%
and functors%
\begin{equation*}
\mathbf{X}\longmapsto \mathbf{H}_{n}^{b}\left( \mathbf{X},G\right) :=\mathbf{%
H}_{n}\left( \mathbf{C}_{\ast }^{b}\left( \mathbf{X},G\right) \right) :%
\mathbf{Inv}\left( \mathbf{POL}\right) \longrightarrow \mathbf{Pro}\left( 
\mathbb{Z}\right) .
\end{equation*}%
In order to use Theorem \ref{Th-description-SSh}, let us check that special
morphisms are mapped into isomorphisms. Due to Proposition \ref%
{Prop-special-morphisms}, it is enough to check this for cofinal morphisms
and for level equivalences. If $f:\mathbf{X}\rightarrow \mathbf{Y}$ is a
cofinal morphism in $\mathbf{Inv}\left( \mathbf{TOP}\right) $ then 
\begin{equation*}
\mathbf{C}_{\ast }\left( f,\mathbb{Z}\right) :\mathbf{C}_{\ast }\left( 
\mathbf{X},\mathbb{Z}\right) \longrightarrow \mathbf{C}_{\ast }\left( 
\mathbf{Y},\mathbb{Z}\right)
\end{equation*}%
is a cofinal morphism in $\mathbf{Inv}\left( \mathbf{CHAIN}\left( \mathbb{Z}%
\right) \right) $, inducing therefore an isomorphism in $\mathbf{Pro}\left( 
\mathbf{CHAIN}\left( \mathbb{Z}\right) \right) $. It follows that%
\begin{equation*}
\mathbf{C}_{\ast }^{b}\left( f,G\right) :\mathbf{C}_{\ast }^{b}\left( 
\mathbf{X},G\right) \longrightarrow \mathbf{C}_{\ast }^{b}\left( \mathbf{Y}%
,G\right)
\end{equation*}%
and 
\begin{equation*}
\mathbf{H}_{\ast }^{b}\left( f,G\right) :\mathbf{H}_{\ast }^{b}\left( 
\mathbf{X},G\right) \longrightarrow \mathbf{H}_{\ast }^{b}\left( \mathbf{Y}%
,G\right)
\end{equation*}%
are isomorphisms as well (in the categories $\mathbf{Pro}\left( \mathbf{CHAIN%
}\left( \mathbb{Z}\right) \right) $ and $\mathbf{Pro}\left( \mathbb{Z}%
\right) $ respectively). Let now 
\begin{equation*}
f=\left( f_{i}\right) _{i\in \mathbf{I}}:\mathbf{X=}\left( X_{i}\right)
_{i\in \mathbf{I}}\longrightarrow \mathbf{Y=}\left( Y_{i}\right) _{i\in 
\mathbf{I}}
\end{equation*}%
be a level equivalence. Let $n\in \mathbb{Z}$. It follows that both $\mathbf{%
H}_{n}\left( \mathbf{X},\mathbb{Z}\right) \rightarrow \mathbf{H}_{n}\left( 
\mathbf{Y},\mathbb{Z}\right) $ and $\mathbf{H}_{n-1}\left( \mathbf{X},%
\mathbb{Z}\right) \rightarrow \mathbf{H}_{n-1}\left( \mathbf{Y},\mathbb{Z}%
\right) $ are isomorphisms in $\mathbf{Pro}\left( \mathbb{Z}\right) $.
Hence, both 
\begin{equation*}
\mathbf{H}_{n}\left( \mathbf{X},\mathbb{Z}\right) \otimes _{\mathbb{Z}%
}G\longrightarrow \mathbf{H}_{n}\left( \mathbf{Y},\mathbb{Z}\right) \otimes
_{\mathbb{Z}}G
\end{equation*}%
and%
\begin{equation*}
\mathbf{Tor}_{1}^{\mathbb{Z}}\left( \mathbf{H}_{n-1}\left( \mathbf{X},%
\mathbb{Z}\right) ,G\right) \longrightarrow \mathbf{Tor}_{1}^{\mathbb{Z}%
}\left( \mathbf{H}_{n-1}\left( \mathbf{Y},\mathbb{Z}\right) ,G\right)
\end{equation*}%
are isomorphisms. The singular chain complexes $C_{\ast }\left( X_{i},%
\mathbb{Z}\right) $ and $C_{\ast }\left( Y_{i},\mathbb{Z}\right) $ consist
of free abelian groups, therefore the pro-complexes $\mathbf{C}_{\ast
}\left( \mathbf{X},\mathbb{Z}\right) $ and $\mathbf{C}_{\ast }\left( \mathbf{%
Y},\mathbb{Z}\right) $ consist of quasi-projective $\mathbb{Z}$-modules
(Definition \ref{Def-quasi-projective} and Proposition \ref%
{Prop-quasi-projective}), and one can apply Theorem \ref%
{Th-UCF-pro-complexes}: there exists a morphism of short exact sequences%
\begin{equation*}
\begin{diagram}[size=2.0em,textflow]
0 & \rTo & \QTR{bf}{H}_{n}\left( \QTR{bf}{X},\QTR{Bbb}{Z}\right) \otimes _{\QTR{Bbb}{Z}}G & \rTo & \QTR{bf}{H}_{n}^{b}\left( \QTR{bf}{X},G\right) & \rTo & Tor_{1}^{\QTR{Bbb}{Z}}\left( \QTR{bf}{H}_{n-1}\left( \QTR{bf}{X},\QTR{Bbb}{Z}\right) ,G\right) & \rTo & 0 \\
  &      & \dTo                   &      & \dTo                   &      &  \dTo          \\
0 & \rTo & \QTR{bf}{H}_{n}\left( \QTR{bf}{Y},\QTR{Bbb}{Z}\right) \otimes _{\QTR{Bbb}{Z}}G & \rTo & \QTR{bf}{H}_{n}^{b}\left( \QTR{bf}{Y},G\right) & \rTo & Tor_{1}^{\QTR{Bbb}{Z}}\left( \QTR{bf}{H}_{n-1}\left( \QTR{bf}{Y},\QTR{Bbb}{Z}\right) ,G\right) & \rTo & 0 \\
\end{diagram}%
\end{equation*}%
The $5$-lemma gives the desired isomorphism $\mathbf{H}_{\ast }^{b}\left(
f,G\right) :\mathbf{H}_{\ast }^{b}\left( \mathbf{X},G\right) \rightarrow 
\mathbf{H}_{\ast }^{b}\left( \mathbf{Y},G\right) $. Hence, the functors $%
\mathbf{H}_{n}^{b}\left( ?,G\right) $ maps special morphisms into
isomorphisms. Theorem \ref{Th-description-SSh} implies that one has
well-defined functors $\mathbf{H}_{n}^{b}\left( ?,G\right) :\mathbf{SSh}%
\rightarrow \mathbf{Mod}\left( \mathbb{Z}\right) $.

\item Follows from Theorem \ref{Th-map-to-weak-products}, because $\mathbb{Z}
$ is noetherian, therefore any finitely generated $\mathbb{Z}$-module is
finitely presented.

\item The functors%
\begin{equation*}
\mathbf{X}\longmapsto \overline{C}_{\ast }^{b}\left( \mathbf{X},G\right) :=%
\underleftarrow{\mathbf{holim}}~\mathbf{C}_{\ast }^{b}\left( \mathbf{X}%
,G\right) :\mathbf{Inv}\left( \mathbf{POL}\right) \longrightarrow \mathbf{%
CHAIN}\left( \mathbb{Z}\right) ,
\end{equation*}%
\begin{equation*}
\mathbf{X}\longmapsto \overline{H}_{n}^{b}\left( \mathbf{X},G\right)
:=H_{n}\left( \overline{C}_{\ast }^{b}\left( \mathbf{X},G\right) \right) :%
\mathbf{Inv}\left( \mathbf{POL}\right) \longrightarrow \mathbf{Mod}\left( 
\mathbb{Z}\right) ,
\end{equation*}%
are well-defined. In order to extend the definitions to the strong shape
category $\mathbf{SSh}$, it is enough, due to Theorem \ref%
{Th-description-SSh} and Proposition \ref{Prop-special-morphisms}, to check
whether cofinal morphisms and level equivalences are mapped into
isomorphisms. If $f:\mathbf{X}\rightarrow \mathbf{Y}$ is cofinal, it follows
from Theorem \ref{Spectral-holimit} (\ref{Spectral-holimit-cofinality}), that%
\begin{equation*}
\overline{C}_{\ast }^{b}\left( \mathbf{X},G\right) =\underleftarrow{\mathbf{%
holim}}~\mathbf{C}_{\ast }^{b}\left( \mathbf{X},G\right) \longrightarrow 
\underleftarrow{\mathbf{holim}}~\mathbf{C}_{\ast }^{b}\left( \mathbf{Y}%
,G\right) =\overline{C}_{\ast }^{b}\left( \mathbf{X},G\right)
\end{equation*}%
is a weak equivalence of complexes, hence%
\begin{equation*}
\overline{H}_{\ast }^{b}\left( \mathbf{X},G\right) =H_{n}\left( 
\underleftarrow{\mathbf{holim}}~\mathbf{C}_{\ast }^{b}\left( \mathbf{X}%
,G\right) \right) \longrightarrow H_{n}\left( \underleftarrow{\mathbf{holim}}%
~\mathbf{C}_{\ast }^{b}\left( \mathbf{Y},G\right) \right) =\overline{H}%
_{\ast }^{b}\left( \mathbf{X},G\right)
\end{equation*}%
is an isomorphism. Let now 
\begin{equation*}
f=\left( f_{i}\right) _{i\in \mathbf{I}}:\mathbf{X=}\left( X_{i}\right)
_{i\in \mathbf{I}}\longrightarrow \mathbf{Y=}\left( Y_{i}\right) _{i\in 
\mathbf{I}}
\end{equation*}%
be a level equivalence. It follows from (\ref{Prop-invariance-balanced-pro})
that 
\begin{equation*}
E_{2}^{st}\left( \mathbf{C}_{\ast }^{b}\left( \mathbf{X},G\right) \right) =%
\underleftarrow{\lim }^{s}~H_{-t}\left( C_{\ast }^{{}}\left( \mathbf{X}%
,G\right) \right) \longrightarrow \underleftarrow{\lim }^{s}~H_{-t}\left(
C_{\ast }^{{}}\left( \mathbf{X},G\right) \right) =E_{2}^{st}\left( \mathbf{C}%
_{\ast }^{b}\left( \mathbf{Y},G\right) \right)
\end{equation*}%
is an isomorphism for all $s$, $t\in \mathbb{Z}$. Using Theorem \ref%
{Spectral-holimit} (\ref{Spectral-holimit-isomorphism}), one concludes that%
\begin{equation*}
\overline{H}_{\ast }^{b}\left( \mathbf{X},G\right) =H_{n}\left( 
\underleftarrow{\mathbf{holim}}~\mathbf{C}_{\ast }^{b}\left( \mathbf{X}%
,G\right) \right) \longrightarrow H_{n}\left( \underleftarrow{\mathbf{holim}}%
~\mathbf{C}_{\ast }^{b}\left( \mathbf{Y},G\right) \right) =\overline{H}%
_{\ast }^{b}\left( \mathbf{X},G\right)
\end{equation*}%
is an isomorphism. Finally, Theorem \ref{Th-description-SSh} guarantees that 
$\overline{H}_{n}^{b}\left( ?,G\right) $ are well-defined functors from $%
\mathbf{SSh}$ to $\mathbf{Mod}\left( \mathbb{Z}\right) $.

\item Follows from Theorem \ref{Th-map-to-weak-products}.
\end{enumerate}
\end{proof}

\section{Proof of the main results}

\subsection{\label{Sec-proof-pairing}Proof of Proposition \protect\ref%
{Prop-pairing}}

\begin{proof}
For each $a\in G$ and for each of the four theories $h_{\ast }$ we will
define morphisms%
\begin{equation*}
\overline{a}_{\ast }:h_{\ast }\left( X,\mathbb{Z}\right) \longrightarrow
h_{\ast }\left( X,G\right)
\end{equation*}%
which satisfy the condition $\left( \overline{a+b}\right) _{\ast }=\overline{%
a}_{\ast }+\overline{b}_{\ast }$, $a$, $b\in G$. Let $X\rightarrow \mathbf{X=%
}\left( X_{i}\right) _{i\in \mathbf{I}}$ be a polyhedral expansion, and let $%
\left( C_{\ast }\left( X_{i},\mathbb{Z}\right) \right) _{i\in \mathbf{I}}$
and $\left( C_{\ast }\left( X_{i},G\right) \right) _{i\in \mathbf{I}}$ be
the corresponding pro-complexes

\begin{enumerate}
\item \textbf{Pro-homology}. The mappings $c\longmapsto c\otimes a$ define
morphisms of pro-complexes%
\begin{equation*}
\overline{a}_{\ast }:\left( C_{\ast }\left( X_{i},\mathbb{Z}\right) \right)
_{i\in \mathbf{I}}\longrightarrow \left( C_{\ast }\left( X_{i},G\right)
\right) _{i\in \mathbf{I}}
\end{equation*}%
and morphisms of their pro-homology groups%
\begin{equation*}
\overline{a}_{\ast }:\left( H_{\ast }\left( X_{i},\mathbb{Z}\right) \right)
_{i\in \mathbf{I}}\longrightarrow \left( H_{\ast }\left( X_{i},G\right)
\right) _{i\in \mathbf{I}}.
\end{equation*}%
Clearly $\left( \overline{a+b}\right) _{\ast }=\overline{a}_{\ast }+%
\overline{b}_{\ast }$, and we get a homomorphism of abelian groups%
\begin{equation*}
a\longmapsto \overline{a}_{\ast }:G\longrightarrow Hom_{\mathbf{Pro}\left( 
\mathbb{Z}\right) }\left( \mathbf{H}_{\ast }\left( \mathbf{X},\mathbb{Z}%
\right) ,\mathbf{H}_{\ast }\left( \mathbf{X},G\right) \right) .
\end{equation*}%
Theorem \ref{Th-tensor-product} gives the desired homomorphism%
\begin{equation*}
\mathbf{H}_{\ast }\left( \mathbf{X},\mathbb{Z}\right) \otimes _{\mathbb{Z}%
}G\longrightarrow \mathbf{H}_{\ast }\left( \mathbf{X},G\right) .
\end{equation*}

\item \textbf{Strong homology}. The mappings 
\begin{equation*}
\overline{a}_{\ast }:\left( C_{\ast }\left( X_{i},\mathbb{Z}\right) \right)
_{i\in \mathbf{I}}\longrightarrow \left( C_{\ast }\left( X_{i},G\right)
\right) _{i\in \mathbf{I}}
\end{equation*}%
from (1) define morphisms of the homotopy inverse limits%
\begin{equation*}
\overline{a}_{\ast }:\underleftarrow{\mathbf{holim}}_{i}\left( C_{\ast
}\left( X_{i},\mathbb{Z}\right) \right) _{i\in \mathbf{I}}\longrightarrow 
\underleftarrow{\mathbf{holim}}_{i}\left( C_{\ast }\left( X_{i},G\right)
\right) _{i\in \mathbf{I}}
\end{equation*}%
and their homologies%
\begin{equation*}
\overline{a}_{\ast }:\overline{H}_{\ast }\left( X,\mathbb{Z}\right)
\longrightarrow \overline{H}_{\ast }\left( X,G\right) .
\end{equation*}%
Clearly $\left( \overline{a+b}\right) _{\ast }=\overline{a}_{\ast }+%
\overline{b}_{\ast }$, and we get a homomorphism of abelian groups%
\begin{equation*}
a\longmapsto \overline{a}_{\ast }:G\longrightarrow Hom_{\mathbf{Mod}\left( 
\mathbb{Z}\right) }\left( \overline{H}_{\ast }\left( X,\mathbb{Z}\right) ,%
\overline{H}_{\ast }\left( X,G\right) \right) .
\end{equation*}%
The usual properties of the tensor product give the desired homomorphism%
\begin{equation*}
\overline{H}_{\ast }\left( X,\mathbb{Z}\right) \otimes _{\mathbb{Z}%
}G\longrightarrow \overline{H}_{\ast }\left( X,G\right) .
\end{equation*}

\item \textbf{Balanced pro-homology}. Due to Theorem \ref{Th-tensor-product}%
, there is a natural isomorphism of abelian groups%
\begin{eqnarray*}
&&Hom_{\mathbf{Mod}\left( \mathbb{Z}\right) }\left( G,Hom_{\mathbf{Pro}%
\left( \mathbb{Z}\right) }\left( \mathbf{C}_{\ast }\left( \mathbf{X},\mathbb{%
Z}\right) ,\mathbf{C}_{\ast }^{b}\left( \mathbf{X},G\right) \right) \right) 
\simeq
\\
&&%
\simeq%
Hom_{\mathbf{Pro}\left( \mathbb{Z}\right) }\left( \mathbf{C}_{\ast }\left( 
\mathbf{X},\mathbb{Z}\right) \otimes _{\mathbb{Z}}G,\mathbf{C}_{\ast
}^{b}\left( \mathbf{X},G\right) \right) .
\end{eqnarray*}%
Since $\mathbf{C}_{\ast }\left( \mathbf{X},\mathbb{Z}\right) \otimes _{%
\mathbb{Z}}G=\mathbf{C}_{\ast }^{b}\left( \mathbf{X},G\right) $ by
definition, let 
\begin{equation*}
\varphi \in \left( G,Hom_{\mathbf{Pro}\left( \mathbb{Z}\right) }\left( 
\mathbf{C}_{\ast }\left( \mathbf{X},\mathbb{Z}\right) ,\mathbf{C}_{\ast
}^{b}\left( \mathbf{X},G\right) \right) \right)
\end{equation*}%
be the morphism corresponding to the identity morphism $\mathbf{C}_{\ast
}\left( \mathbf{X},\mathbb{Z}\right) \otimes _{\mathbb{Z}}G\rightarrow 
\mathbf{C}_{\ast }^{b}\left( \mathbf{X},G\right) $ under the isomorphism
above. For $a\in G$, let 
\begin{equation*}
\overline{\varphi }\left( a\right) _{\ast }:\mathbf{H}_{\ast }\left( \mathbf{%
X},\mathbb{Z}\right) \longrightarrow \mathbf{H}_{\ast }^{b}\left( \mathbf{X}%
,G\right) .
\end{equation*}%
be the induced mapping of pro-homologies. The correspondence $a\mapsto 
\overline{\varphi }\left( a\right) _{\ast }$ defines a homomorphism%
\begin{equation*}
G\longrightarrow Hom_{\mathbf{Pro}\left( \mathbb{Z}\right) }\left( \mathbf{H}%
_{\ast }\left( \mathbf{X},\mathbb{Z}\right) ,\mathbf{H}_{\ast }^{b}\left( 
\mathbf{X},G\right) \right) .
\end{equation*}%
Applying again Theorem \ref{Th-tensor-product}, one gets the desired
morphism of abelian pro-groups%
\begin{equation*}
\mathbf{H}_{\ast }\left( \mathbf{X},\mathbb{Z}\right) \otimes _{\mathbb{Z}%
}G\longrightarrow \mathbf{H}_{\ast }^{b}\left( \mathbf{X},G\right) .
\end{equation*}

\item \textbf{Balanced strong homology}. The morphisms%
\begin{equation*}
\varphi \left( a\right) \in Hom_{\mathbf{Pro}\left( \mathbb{Z}\right)
}\left( \mathbf{C}_{\ast }\left( \mathbf{X},\mathbb{Z}\right) ,\mathbf{C}%
_{\ast }^{b}\left( \mathbf{X},G\right) \right)
\end{equation*}%
from (3) define morphisms%
\begin{equation*}
\underleftarrow{\mathbf{holim}}~\varphi \left( a\right) :\underleftarrow{%
\mathbf{holim}}~\mathbf{C}_{\ast }\left( \mathbf{X},\mathbb{Z}\right)
\longrightarrow \underleftarrow{\mathbf{holim}}~\mathbf{C}_{\ast }^{b}\left( 
\mathbf{X},G\right)
\end{equation*}%
and the corresponding morphisms of homologies%
\begin{equation*}
\overline{a}_{\ast }:\overline{H}_{\ast }\left( X,\mathbb{Z}\right) =%
\overline{H}_{\ast }^{b}\left( X,\mathbb{Z}\right) \longrightarrow \overline{%
H}_{\ast }^{b}\left( X,G\right) .
\end{equation*}%
Clearly $\left( \overline{a+b}\right) _{\ast }=\overline{a}_{\ast }+%
\overline{b}_{\ast }$, and one gets a homomorphism of abelian groups%
\begin{equation*}
a\longmapsto \overline{a}_{\ast }:G\longrightarrow Hom_{\mathbf{Mod}\left( 
\mathbb{Z}\right) }\left( \overline{H}_{\ast }^{b}\left( X,\mathbb{Z}\right)
,\overline{H}_{\ast }^{b}\left( X,G\right) \right) .
\end{equation*}%
The usual properties of the tensor product give the desired homomorphism%
\begin{equation*}
\overline{H}_{\ast }^{b}\left( X,\mathbb{Z}\right) \otimes _{\mathbb{Z}%
}G\longrightarrow \overline{H}_{\ast }^{b}\left( X,G\right) .
\end{equation*}
\end{enumerate}
\end{proof}

\subsection{\label{Sec-proof-UCF-FAB}Proof of Theorem \protect\ref%
{Th-UCF-FAB}}

\begin{proof}
Since for $G\in \mathbf{FAB}$, the natural morphisms $\mathbf{H}%
_{n}^{b}\left( X,G\right) \rightarrow \mathbf{H}_{n}\left( X,G\right) $ and $%
\overline{H}_{n}^{b}\left( X,G\right) \rightarrow \overline{H}_{n}\left(
X,G\right) $ are isomorphisms (Theorem \ref{Prop-invariance} (\ref%
{Prop-invariance-compare-pro}, \ref{Prop-invariance-compare-strong})), it is
enough to prove our statement for the two non-balanced homology theories.
Let $X\rightarrow \mathbf{X}$ be a strong polyhedral expansion. Consider
first the case $G=\mathbb{Z}^{n}$ is a free finitely generated abelian
group. Clearly, $\mathbf{Tor}_{1}^{\mathbb{Z}}\left( \mathbf{H}_{n-1}\left( 
\mathbf{X}\right) ,G\right) =0$ and $Tor_{1}^{\mathbb{Z}}\left( \overline{H}%
_{n-1}\left( \mathbf{X}\right) ,G\right) =0$, and it is enough to notice
that both%
\begin{equation*}
\mathbf{H}_{n}\left( \mathbf{X,}\mathbb{Z}\right) \otimes _{\mathbb{Z}%
}G=\dbigoplus\limits^{n}\mathbf{H}_{n}\left( \mathbf{X,}\mathbb{Z}\right)
\longrightarrow \mathbf{H}_{n}\left( \mathbf{X},G\right)
\end{equation*}%
and%
\begin{equation*}
\overline{H}_{n}\left( \mathbf{X,}\mathbb{Z}\right) \otimes _{\mathbb{Z}%
}G=\dbigoplus\limits^{n}\overline{H}_{n}\left( \mathbf{X,}\mathbb{Z}\right)
\longrightarrow \overline{H}_{n}\left( \mathbf{X},G\right)
\end{equation*}%
are isomorphisms.

Let now%
\begin{equation*}
0\longleftarrow G\longleftarrow F_{0}\longleftarrow F_{1}\longleftarrow 0
\end{equation*}%
be a resolution of $G$ where $F_{i}$ are free finitely generated abelian
groups. There is a short exact sequence of pro-complexes and level morphisms%
\begin{equation*}
0\longrightarrow \mathbf{C}_{\ast }\left( \mathbf{X},F_{1}\right)
\longrightarrow \mathbf{C}_{\ast }\left( \mathbf{X},F_{0}\right)
\longrightarrow \mathbf{C}_{\ast }\left( \mathbf{X},G\right) \longrightarrow
0
\end{equation*}%
inducing a short exact sequence of homotopy inverse limits%
\begin{equation*}
0\longrightarrow \underleftarrow{\mathbf{holim}}~\mathbf{C}_{\ast }\left( 
\mathbf{X},F_{1}\right) \longrightarrow \underleftarrow{\mathbf{holim}}~%
\mathbf{C}_{\ast }\left( \mathbf{X},F_{0}\right) \longrightarrow 
\underleftarrow{\mathbf{holim}}~\mathbf{C}_{\ast }\left( \mathbf{X},G\right)
\longrightarrow 0.
\end{equation*}%
The first sequence gives rise to long exact sequence of pro-homologies%
\begin{equation*}
...\longrightarrow \mathbf{H}_{n}\left( \mathbf{X},F_{1}\right)
\longrightarrow \mathbf{H}_{n}\left( \mathbf{X},F_{0}\right) \longrightarrow 
\mathbf{H}_{\ast }\left( \mathbf{X},G\right) \longrightarrow \mathbf{H}%
_{n-1}\left( \mathbf{X},F_{1}\right) \longrightarrow \mathbf{H}_{n-1}\left( 
\mathbf{X},F_{0}\right) \longrightarrow ...
\end{equation*}%
while the second one induces a long exact sequence of strong homologies%
\begin{equation*}
...\longrightarrow \overline{H}_{n}\left( \mathbf{X},F_{1}\right)
\longrightarrow \overline{H}_{n}\left( \mathbf{X},F_{0}\right)
\longrightarrow \overline{H}_{n}\left( \mathbf{X},G\right) \longrightarrow 
\overline{H}_{n-1}\left( \mathbf{X},F_{1}\right) \longrightarrow \overline{H}%
_{n-1}\left( \mathbf{X},F_{0}\right) \longrightarrow ...
\end{equation*}%
Consider a morphism of exact sequences%
\begin{equation*}
\begin{diagram}
\QTR{bf}{H}_{n}\left( \QTR{bf}{X},F_{1}\right) & \rTo & \QTR{bf}{H}_{n}\left( \QTR{bf}{X},F_{0}\right) & \rTo & \QTR{bf}{H}_{n}\left( \QTR{bf}{X},G\right) & \rTo & \QTR{bf}{H}_{n-1}\left( \QTR{bf}{X},F_{1}\right) \\
   \uTo_{\alpha} &  & \uTo_{\beta}         &   & \uTo_{\gamma}  & & \uTo \\
\QTR{bf}{H}_{n}\left( \QTR{bf}{X},\QTR{Bbb}{Z}\right) \otimes _{\QTR{Bbb}{Z}}F_{1} & \rTo & \QTR{bf}{H}_{n}\left( \QTR{bf}{X},\QTR{Bbb}{Z}\right) \otimes _{\QTR{Bbb}{Z}}F_{0} & \rTo & \QTR{bf}{H}_{n}\left( \QTR{bf}{X},\QTR{Bbb}{Z}\right) \otimes _{\QTR{Bbb}{Z}}G & \rTo & 0 \\
\end{diagram}%
\end{equation*}%
Since $\alpha $ and $\beta $ are isomorphisms, $\gamma $ is a monomorphism
with the cokernel equal to 
\begin{eqnarray*}
&&\ker \left( \mathbf{H}_{n-1}\left( \mathbf{X},F_{1}\right) \longrightarrow 
\mathbf{H}_{n-1}\left( \mathbf{X},F_{0}\right) \right) 
\simeq%
\ker \left( \mathbf{H}_{n-1}\left( \mathbf{X},\mathbb{Z}\right) \otimes _{%
\mathbb{Z}}F_{1}\longrightarrow \mathbf{H}_{n-1}\left( \mathbf{X},\mathbb{Z}%
\right) \otimes _{\mathbb{Z}}F_{0}\right) 
\simeq
\\
&&%
\simeq%
\mathbf{Tor}_{1}^{\mathbb{Z}}\left( \mathbf{H}_{n-1}\left( \mathbf{X},%
\mathbb{Z}\right) ,G\right) .
\end{eqnarray*}%
Similarly, $\overline{H}_{n}\left( \mathbf{X},\mathbb{Z}\right) \otimes _{%
\mathbb{Z}}G\rightarrow \overline{H}_{n}\left( \mathbf{X},G\right) $ is a
monomorphism with the cokernel equal to%
\begin{eqnarray*}
&&\ker \left( \overline{H}_{n-1}\left( \mathbf{X},F_{1}\right)
\longrightarrow \overline{H}_{n-1}\left( \mathbf{X},F_{0}\right) \right) 
\simeq%
\ker \left( \overline{H}_{n-1}\left( \mathbf{X},\mathbb{Z}\right) \otimes _{%
\mathbb{Z}}F_{1}\longrightarrow \overline{H}_{n-1}\left( \mathbf{X},\mathbb{Z%
}\right) \otimes _{\mathbb{Z}}F_{0}\right) 
\simeq
\\
&&%
\simeq%
Tor_{1}^{\mathbb{Z}}\left( \overline{H}_{n-1}\left( \mathbf{X},\mathbb{Z}%
\right) ,G\right) .
\end{eqnarray*}
\end{proof}

\subsection{\label{Sec-proof-balanced-pro}Proof of Theorem \protect\ref%
{Th-balanced-pro}}

\begin{proof}
Let $X\rightarrow \mathbf{X}$ be a strong polyhedral expansion. Apply
Theorem \ref{Th-UCF-pro-complexes} to the pro-complex $\mathbf{C}_{\ast
}\left( \mathbf{X},\mathbb{Z}\right) $.
\end{proof}

\subsection{\label{Sec-proof-pro}Proof of Theorem \protect\ref{Th-pro}}

\begin{definition}
\label{Def-P(H)}For an arbitrary abelian group $H$, let $\mathbf{P}\left(
H\right) ,\mathbf{P}^{\omega }\left( H\right) \in \mathbf{Pro}\left( \mathbb{%
Z}\right) $ be the following pro-groups:%
\begin{eqnarray*}
\mathbf{P}\left( H\right) &=&\left( H\longleftarrow H\oplus H\longleftarrow
H\oplus H\oplus H\longleftarrow ...\right) , \\
\mathbf{P}^{\omega }\left( H\right) &=&\dbigoplus\limits_{0}^{\infty }%
\mathbf{P}\left( H\right) .
\end{eqnarray*}
\end{definition}

\begin{proof}
Let $G$ be a free countably generated abelian group. Since $\mathbf{Tor}%
_{1}^{\mathbb{Z}}\left( \mathbf{H}_{n-1}\left( X,\mathbb{Z}\right) ,G\right)
=0$, it is enough to construct an example $X$ such that $\mathbf{H}%
_{n}\left( X,\mathbb{Z}\right) \otimes _{\mathbb{Z}}G\rightarrow \mathbf{H}%
_{n}\left( X,G\right) $ is \textbf{not} an isomorphism. Let 
\begin{equation*}
X_{k}=\dbigvee\limits_{0}^{\infty }S^{k}
\end{equation*}%
be the $k$-dimensional Hawaiian ear-ring, i.e. a \textbf{compact wedge} (or
the \textbf{cluster}) of countably many $k$-spheres. Assume for simplicity
that $k\neq 0$. $X_{k}$ can be described as a subspace of $\mathbb{R}^{k+1}$%
: 
\begin{equation*}
X_{k}=\dbigcup\limits_{i=0}^{\infty }S_{k}^{\left( i\right) }
\end{equation*}%
where $S_{k}^{\left( i\right) }$ is the sphere of radius $\frac{1}{i+1}$
with the center in $\left( \frac{1}{i+1},0,...,0\right) \in \mathbb{R}^{k+1}$%
. It follows from (\cite{Prasolov-Non-additivity-MR2175366}, Section 4.2, p.
505) that (see Definition \ref{Def-P(H)}) 
\begin{equation*}
\mathbf{H}_{n}\left( X_{k},\mathbb{Z}\right) =\left\{ 
\begin{array}{ccc}
\mathbf{P}\left( \mathbb{Z}\right) & \text{if} & n=k, \\ 
0 & \text{if} & n\neq k,0, \\ 
\mathbb{Z} & \text{if} & n=0.%
\end{array}%
\right.
\end{equation*}%
\begin{equation*}
\mathbf{H}_{k}\left( X_{k},\mathbb{Z}\right) \otimes _{\mathbb{Z}}G=\left\{ 
\begin{array}{ccc}
\mathbf{P}^{\omega }\left( \mathbb{Z}\right) & \text{if} & n=k, \\ 
0 & \text{if} & n\neq k,0, \\ 
G & \text{if} & n=0.%
\end{array}%
\right.
\end{equation*}%
\begin{equation*}
\mathbf{H}_{k}\left( X_{k},G\right) =\left\{ 
\begin{array}{ccc}
\mathbf{P}\left( G\right) & \text{if} & n=k, \\ 
0 & \text{if} & n\neq k,0, \\ 
G & \text{if} & n=0.%
\end{array}%
\right.
\end{equation*}%
Assume on the contrary that $\mathbf{H}_{k}\left( X_{k},\mathbb{Z}\right)
\otimes _{\mathbb{Z}}G\rightarrow \mathbf{H}_{k}\left( X_{k},G\right) $ is
an isomorphism. It would follow that%
\begin{equation*}
Hom_{\mathbf{Pro}\left( \mathbb{Z}\right) }\left( \mathbf{H}_{k}\left(
X_{k},G\right) ,B\right) \longrightarrow Hom_{\mathbf{Pro}\left( \mathbb{Z}%
\right) }\left( \mathbf{H}_{k}\left( X_{k},\mathbb{Z}\right) \otimes _{%
\mathbb{Z}}G,B\right)
\end{equation*}%
is an isomorphism for any abelian group $B$. Direct computation shows that%
\begin{eqnarray*}
Hom_{\mathbf{Pro}\left( \mathbb{Z}\right) }\left( \mathbf{H}_{k}\left(
X_{k},G\right) ,B\right) &=&\underrightarrow{\lim }~\left(
\dprod\limits_{0}^{\infty }B\longrightarrow \dprod\limits_{0}^{\infty
}B\times B\longrightarrow \dprod\limits_{0}^{\infty }B\times B\times
B\longrightarrow ...\right) , \\
Hom_{\mathbf{Pro}\left( \mathbb{Z}\right) }\left( \mathbf{H}_{k}\left( X_{k},%
\mathbb{Z}\right) \otimes _{\mathbb{Z}}G,B\right)
&=&\dprod\limits_{0}^{\infty }\underrightarrow{\lim }~\left(
B\longrightarrow B\times B\longrightarrow B\times B\times B\longrightarrow
...\right) .
\end{eqnarray*}%
Notice \textbf{direct} (instead of inverse) limits. Elements of $%
\dprod\limits_{0}^{\infty }B^{s+1}$ can be represented by infinite $s\times
\infty $ matrices $\left( b_{ij}\right) _{0\leq i\leq s,j\geq 0}$, or,
equivalently, by $\infty \times \infty $ matrices $\left( b_{ij}\right)
_{i\geq 0,j\geq 0}$ with $b_{ij}=0$ whenever $i>s$. It follows that 
\begin{equation*}
\underrightarrow{\lim }~\left( \dprod\limits_{0}^{\infty }B\longrightarrow
\dprod\limits_{0}^{\infty }B\times B\longrightarrow
\dprod\limits_{0}^{\infty }B\times B\times B\longrightarrow ...\right)
\end{equation*}%
can be represented by $\infty \times \infty $ matrices $\left( b_{ij}\right)
_{i\geq 0,j\geq 0}$ matrices with $b_{ij}=0$ whenever $i>s$ for \textbf{some}
$s$. Similarly, elements of%
\begin{equation*}
\underrightarrow{\lim }\left( B\longrightarrow B\times B\longrightarrow
B\times B\times B\longrightarrow ...\right)
\end{equation*}%
can be represented by $1\times \infty $ matrices $\left( b_{j}\right)
_{j\geq 0}$ where $b_{j}=0$ if $j>s$ for \textbf{some} $s$. Finally,
elements of 
\begin{equation*}
\dprod\limits_{0}^{\infty }\underrightarrow{\lim }~\left( B\longrightarrow
B\times B\longrightarrow B\times B\times B\longrightarrow ...\right)
\end{equation*}%
can be represented by $\infty \times \infty $ matrices $\left( b_{ij}\right)
_{i\geq 0,j\geq 0}$ matrices with $b_{ij}=0$ whenever $i>s_{j}$ for \textbf{%
some} $s_{j}$ (depending on $j$). Let $B$ be any non-trivial abelian group,
let $c\in B$, $c\neq 0$. Define%
\begin{equation*}
b_{ij}=\left\{ 
\begin{array}{ccc}
0 & \text{if} & i>j, \\ 
c & \text{if} & i\leq j.%
\end{array}%
\right.
\end{equation*}%
Clearly 
\begin{equation*}
\left( b_{ij}\right) \in \dprod\limits_{0}^{\infty }\underrightarrow{\lim }%
~\left( B\longrightarrow B\times B\longrightarrow B\times B\times
B\longrightarrow ...\right) ,
\end{equation*}%
but $\left( b_{ij}\right) $ does not lie in the image of%
\begin{equation*}
\underrightarrow{\lim }~\left( \dprod\limits_{0}^{\infty }B\longrightarrow
\dprod\limits_{0}^{\infty }B\times B\longrightarrow
\dprod\limits_{0}^{\infty }B\times B\times B\longrightarrow ...\right) .
\end{equation*}

The pairing $\mathbf{H}_{k}\left( X_{k},\mathbb{Z}\right) \otimes _{\mathbb{Z%
}}G\rightarrow \mathbf{H}_{k}\left( X_{k},G\right) $ is \textbf{not} an
isomorphism. Contradiction.
\end{proof}

\subsection{\label{Sec-proof-strong}Proof of Theorem \protect\ref{Th-strong}}

\begin{proof}
Let $X_{k}$, $k\geq 1$, and $G$, be the same as in Section \ref%
{Sec-proof-pro}. It follows from (\cite{Prasolov-Non-additivity-MR2175366},
Section 4.2, p. 505) that%
\begin{eqnarray*}
&&\overline{H}_{k}\left( X_{k},\mathbb{Z}\right) 
\simeq%
\dprod\limits_{0}^{\infty }\mathbb{Z}, \\
&&\overline{H}_{k}\left( X_{k},\mathbb{Z}\right) \otimes _{\mathbb{Z}}G%
\simeq%
\dbigoplus\limits_{0}^{\infty }\dprod\limits_{0}^{\infty }\mathbb{Z}, \\
&&\overline{H}_{k}\left( X_{k},G\right) 
\simeq%
\dprod\limits_{0}^{\infty }G%
\simeq%
\dprod\limits_{0}^{\infty }\dbigoplus\limits_{0}^{\infty }\mathbb{Z}.
\end{eqnarray*}%
Let us repeat the proof here. Let $F$ be any abelian group. We will
calculate $\overline{H}_{k}\left( X_{k},F\right) $ using the spectral
sequence from Theorem \ref{Th-Spectral-strong-homology} (\ref%
{Th-Spectral-strong-homology-convergence}), and the calculations from
Section \ref{Sec-proof-pro}. Clearly, since $~\mathbf{P}\left( F\right) $
(see Definition \ref{Def-P(H)}) is a tower of epimorphisms,%
\begin{equation*}
E_{2}^{st}\left( \mathbf{C}_{\ast }\left( \mathbf{X},F\right) \right)
=\left\{ 
\begin{array}{ccc}
\underleftarrow{\lim }~\mathbf{P}\left( F\right) 
\simeq%
\dprod\limits_{0}^{\infty }F & \text{if} & s=0,t=-k, \\ 
F & \text{if} & s=t=0, \\ 
\underleftarrow{\lim }^{1}~\mathbf{P}\left( F\right) =0 & \text{if} & 
s=1,t=-k, \\ 
0 & \text{otherwise.} & 
\end{array}%
\right.
\end{equation*}%
The spectral sequence degenerates, implying $\underleftarrow{\lim }%
_{r}^{1}~E_{r}^{st}=0$. It follows that%
\begin{equation*}
\overline{H}_{k}\left( X_{k},F\right) 
\simeq%
E_{\infty }^{0,-k}%
\simeq%
E_{2}^{0,-k}%
\simeq%
\dprod\limits_{0}^{\infty }F.
\end{equation*}%
Setting $F=\mathbb{Z}$ and $F=G$, one gets the desired formulas.

Moreover, $Tor_{1}^{\mathbb{Z}}\left( \overline{H}_{k-1}\left( X_{k},\mathbb{%
Z}\right) ,G\right) =0$ because $G$ is a free abelian group. The elements of 
$\overline{H}_{k}\left( X_{k},\mathbb{Z}\right) \otimes _{\mathbb{Z}}G$ are
represented by $\infty \times \infty $ matrices $\left( b_{ij}\in \mathbb{Z}%
\right) $ where $b_{ij}=0$ if $i>s$ for some $s$, while elements of $%
\overline{H}_{k}\left( X_{k},G\right) $ are represented by matrices $\left(
b_{ij}\right) $ where $b_{ij}=0$ if $i>s_{j}$ for some $s_{j}$ (depending on
\thinspace $j$). Let%
\begin{equation*}
b_{ij}=\left\{ 
\begin{array}{ccc}
0 & \text{if} & i>j, \\ 
1 & \text{if} & i\leq j.%
\end{array}%
\right.
\end{equation*}%
Clearly $\left( b_{ij}\right) \in \overline{H}_{k}\left( X_{k},G\right) $,
but $\left( b_{ij}\right) $ does not lie in the image of $\overline{H}%
_{k}\left( X_{k},\mathbb{Z}\right) \otimes _{\mathbb{Z}}G$. The pairing $%
\overline{H}_{k}\left( X_{k},\mathbb{Z}\right) \otimes _{\mathbb{Z}%
}G\rightarrow \overline{H}_{k}\left( X_{k},G\right) $ is \textbf{not} an
isomorphism. Contradiction.
\end{proof}

\subsection{\label{Sec-proof-balanced-strong}Proof of Theorem \protect\ref%
{Th-balanced-strong}}

This is the most complicated argument.

\subsubsection{\label{Sec-example-CH}The counter-example depending on the
Continuum Hypothesis}

\begin{proof}
Take again $G$ a countably generated abelian group. The first
counter-example will be the same $X_{k}$ (the cluster of $k$-spheres) as
above. However, the statement that 
\begin{equation*}
\overline{H}_{k-1}\left( X_{k},\mathbb{Z}\right) \otimes _{\mathbb{Z}}G=%
\overline{H}_{k-1}^{b}\left( X_{k},\mathbb{Z}\right) \otimes _{\mathbb{Z}%
}G\longrightarrow \overline{H}_{k-1}^{b}\left( X_{k},G\right)
\end{equation*}%
is \textbf{not} an isomorphism, would depend on the Continuum Hypothesis (in
fact, on a weaker assumption $d=\aleph _{1}$). A counter-example which is 
\textbf{independent} on the Continuum Hypothesis, will be much more
complicated (see Section \ref{Sec-absolute-example}). Assume for simplicity
that $k\geq 2$. Let $X_{k}\rightarrow \mathbf{X}$ be a strong expansion.
Consider the spectral sequence from Theorem \ref%
{Th-Spectral-holimit-pro-complex} (\ref%
{Th-Spectral-holimit-pro-complex-convergence}) for the pro-complex 
\begin{equation*}
\mathbf{C}_{\ast }^{b}\left( \mathbf{X},G\right) =\mathbf{C}_{\ast }\left( 
\mathbf{X},\mathbb{Z}\right) \otimes _{\mathbb{Z}}G=\dbigoplus\limits_{0}^{%
\infty }\mathbf{C}_{\ast }\left( \mathbf{X},\mathbb{Z}\right) .
\end{equation*}%
Reasoning exactly as in (\cite{Prasolov-Non-additivity-MR2175366}, Theorem
4.5 and Proposition 4.10), one calculates the $E_{2}$ terms (see Definition %
\ref{Def-P(H)}):%
\begin{equation*}
E_{2}^{st}\left( \mathbf{C}_{\ast }^{b}\left( \mathbf{X},G\right) \right)
=\left\{ 
\begin{array}{ccc}
\underleftarrow{\lim }~\mathbf{P}^{\omega }\left( \mathbb{Z}\right) 
\simeq%
\dbigoplus\limits_{0}^{\infty }\dprod\limits_{0}^{\infty }\mathbb{Z} & \text{%
if} & s=0,t=-k, \\ 
\underleftarrow{\lim }^{s}~\mathbf{P}^{\omega }\left( \mathbb{Z}\right) & 
\text{if} & s>0,t=-k, \\ 
\dbigoplus\limits_{0}^{\infty }\mathbb{Z} & \text{if} & s=t=0, \\ 
0 & \text{otherwise.} & 
\end{array}%
\right.
\end{equation*}%
The spectral sequence degenerates, implying $\underleftarrow{\lim }%
_{r}^{1}~E_{r}^{st}=0$, and it follows that%
\begin{equation*}
\overline{H}_{k-1}^{b}\left( \mathbf{X},G\right) 
\simeq%
\underleftarrow{\lim }^{1}~\mathbf{P}^{\omega }\left( \mathbb{Z}\right) .
\end{equation*}%
In \cite{van-Douwen-The-integers-and-topology-1984}, \S 3, the two cardinals
were defined: $b$ (bounding number) and $d$ (dominating number). It is known
that $\aleph _{1}\leq b\leq d\leq c$ where $c$ is the cardinality of
continuum. Moreover (see \cite{van-Douwen-The-integers-and-topology-1984}, 
\S 5), for any integers $1\leq p\leq l\leq m$ the statement%
\begin{equation*}
\left( b=\aleph _{p}\right) \&\left( d=\aleph _{l}\right) \&\left( c=\aleph
_{m}\right)
\end{equation*}%
is consistent with ZFC (Zermelo--Fraenkel axioms plus the Axiom of Choice).
Assume now $d=\aleph _{1}$ (this assumption is weaker than the Continuum
Hypothesis $c=\aleph _{1}$). It is proved in (\cite%
{Prasolov-Non-additivity-MR2175366}), Theorem 4, that under this assumption
the cardinality of $\underleftarrow{\lim }^{1}~\mathbf{P}^{\omega }\left( 
\mathbb{Z}\right) $ is very large (equal to $\left( \aleph _{0}\right)
^{\aleph _{1}}$). Since%
\begin{equation*}
\overline{H}_{k-1}^{b}\left( \mathbf{X},\mathbb{Z}\right) =\overline{H}%
_{k-1}\left( \mathbf{X},\mathbb{Z}\right) =0,
\end{equation*}%
it follows that%
\begin{equation*}
0=\overline{H}_{k-1}^{b}\left( X_{k},\mathbb{Z}\right) \otimes _{\mathbb{Z}%
}G\longrightarrow \overline{H}_{k-1}^{b}\left( X_{k},G\right) =%
\underleftarrow{\lim }^{1}~\mathbf{P}^{\omega }\left( \mathbb{Z}\right) \neq
0
\end{equation*}%
is \textbf{not} an isomorphism. Contradiction.
\end{proof}

\begin{remark}
\label{Rem-PFA}In \cite{Dow-Simon-Vaughan-Strong-homology-and-the-pro-1989}
it is proved that under PFA (the Proper Forcing Axiom) $\underleftarrow{\lim 
}^{1}~\mathbf{P}^{\omega }\left( \mathbb{Z}\right) =0$. Therefore, the
statement \textquotedblleft $\overline{H}_{k-1}^{b}\left( X_{k},\mathbb{Z}%
\right) \otimes _{\mathbb{Z}}G\longrightarrow \overline{H}_{k-1}^{b}\left(
X_{k},G\right) $ is an isomorphism\textquotedblright\ \textbf{does not depend%
} on ZFC.
\end{remark}

\subsubsection{\label{Sec-absolute-example}The \textquotedblleft
absolute\textquotedblright\ counter-example}

Let $\omega _{1}$ be the first uncountable ordinal.

\begin{definition}
\label{Def-A(w1)}(compare with \cite{Prasolov-Non-additivity-MR2175366},
Section 6) Let $C$ be an abelian group. Define $I\left( \omega _{1}\right) $
to be the category with the elements $\alpha <\omega _{1}$ as objects, and
inequalities $\beta <\alpha $ as morphisms $\alpha \rightarrow \beta $.
Notice the inverse order in which $\alpha $ and $\beta $ appear in the
morphisms. Consider the following two abelian pro-groups:%
\begin{eqnarray*}
\mathbf{A}\left( C\right) &=&\left( A_{\alpha }\left( C\right) \right)
_{\alpha \in I\left( \omega _{1}\right) }, \\
\mathbf{A}^{\omega }\left( C\right) &=&\dbigoplus\limits_{0}^{\infty }%
\mathbf{A}\left( C\right) ,
\end{eqnarray*}%
where%
\begin{equation*}
A_{\alpha }\left( C\right) =\dbigoplus\limits_{\gamma \in \left[ \alpha
,\omega _{1}\right\rangle }C,
\end{equation*}%
and for $\beta <\alpha $ the morphisms%
\begin{equation*}
\left( \alpha \longrightarrow \beta \right) _{\ast }:A_{\alpha }\left(
C\right) =\dbigoplus\limits_{\gamma \in \left[ \alpha ,\omega
_{1}\right\rangle }C\longrightarrow A_{\beta }\left( C\right)
=\dbigoplus\limits_{\gamma \in \left[ \beta ,\omega _{1}\right\rangle }C
\end{equation*}%
are induced by the inclusions $\left[ \alpha ,\omega _{1}\right\rangle
\subseteq \left[ \beta ,\omega _{1}\right\rangle $.
\end{definition}

\begin{proof}
Let $m\geq 1$ be an integer. Denote by $X_{m}$ the space $X\left( m,0,\omega
_{1}\right) $ from \cite{Mardesic-Nonvanishing-derived-limits-in-shape-1996}%
. As a set, $X_{m}$ is a wedge 
\begin{equation*}
\dbigvee\limits_{\alpha <\omega _{1}}B^{m}
\end{equation*}%
of $m$-dimensional balls equipped with a special paracompact topology. In (%
\cite{Mardesic-Nonvanishing-derived-limits-in-shape-1996}, Theorem 3 and 6),
a polyhedral resolution $X_{m}\rightarrow \mathbf{X}$ was constructed, and
the pro-homology of $X_{m}$ was calculated. Namely, 
\begin{equation*}
\mathbf{H}_{n}\left( X_{m},\mathbb{Z}\right) =\mathbf{H}_{n}\left( \mathbf{C}%
_{\ast }\left( X_{m},\mathbb{Z}\right) \right) =\left\{ 
\begin{array}{ccc}
\mathbb{Z} & \text{if} & n=0, \\ 
\mathbf{A}\left( \mathbb{Z}\right)  & \text{if} & n=m \\ 
0 & \text{otherwise.} & 
\end{array}%
\right. 
\end{equation*}%
Let us now proceed similarly to \cite{Prasolov-Non-additivity-MR2175366},
Proposition 7.3. Let $G$ be, as in Section \ref{Sec-example-CH}, a countably
generated free abelian group. Assume for simplicity that $m>2$. In the two
spectral sequences from Theorem \ref{Th-Spectral-holimit-pro-complex} (\ref%
{Th-Spectral-holimit-pro-complex-convergence}) for the pro-complexes $%
\mathbf{C}_{\ast }\left( X_{m},\mathbb{Z}\right) $ and $\mathbf{C}_{\ast
}^{b}\left( X_{m},\mathbb{Z}\right) =\mathbf{C}_{\ast }\left( X_{m},\mathbb{Z%
}\right) \otimes _{\mathbb{Z}}G$ one obtains, using \cite%
{Prasolov-Non-additivity-MR2175366}, Proposition 6.1: 
\begin{equation*}
E_{2}^{st}\left( \mathbf{C}_{\ast }\left( X_{m},\mathbb{Z}\right) \right)
=\left\{ 
\begin{array}{ccc}
\mathbb{Z} & \text{if} & s=t=0, \\ 
\underleftarrow{\lim }^{2}~\mathbf{A}\left( \mathbb{Z}\right)  & \text{if} & 
s=2,t=-m, \\ 
0 & \text{otherwise.} & 
\end{array}%
\right. 
\end{equation*}%
\begin{equation*}
E_{2}^{st}\left( \mathbf{C}_{\ast }^{b}\left( X_{m},G\right) \right)
=\left\{ 
\begin{array}{ccc}
\dbigoplus\limits_{0}^{\infty }\mathbb{Z} & \text{if} & s=t=0, \\ 
\underleftarrow{\lim }^{2}~\mathbf{A}^{\omega }\left( \mathbb{Z}\right)  & 
\text{if} & s=2,t=-m, \\ 
0 & \text{otherwise.} & 
\end{array}%
\right. 
\end{equation*}%
Both spectral sequences degenerate, implying $\underleftarrow{\lim }%
^{1}~E_{r}^{st}=0$. It follows from Theorem \ref%
{Th-Spectral-holimit-pro-complex} (\ref%
{Th-Spectral-holimit-pro-complex-convergence}) that%
\begin{equation*}
\overline{H}_{n}^{b}\left( X_{m},\mathbb{Z}\right) =\overline{H}_{n}\left(
X_{m},\mathbb{Z}\right) =\left\{ 
\begin{array}{ccc}
\mathbb{Z} & \text{if} & n=0, \\ 
\underleftarrow{\lim }^{2}~\mathbf{A}\left( \mathbb{Z}\right)  & \text{if} & 
n=m-2, \\ 
0 & \text{otherwise.} & 
\end{array}%
\right. 
\end{equation*}%
\begin{equation*}
\overline{H}_{n}^{b}\left( X_{m},G\right) =\left\{ 
\begin{array}{ccc}
\dbigoplus\limits_{0}^{\infty }\mathbb{Z} & \text{if} & n=0, \\ 
\underleftarrow{\lim }^{2}~\mathbf{A}^{\omega }\left( \mathbb{Z}\right)  & 
\text{if} & n=m-2, \\ 
0 & \text{otherwise.} & 
\end{array}%
\right. 
\end{equation*}%
The pairing%
\begin{equation*}
\overline{H}_{m-2}^{b}\left( X_{m},\mathbb{Z}\right) \otimes _{\mathbb{Z}%
}G=\dbigoplus\limits_{0}^{\infty }\underleftarrow{\lim }^{2}~\mathbf{A}%
\left( \mathbb{Z}\right) \longrightarrow \overline{H}_{m-2}^{b}\left(
X_{m},G\right) =\underleftarrow{\lim }^{2}~\mathbf{A}^{\omega }\left( 
\mathbb{Z}\right) 
\end{equation*}%
is \textbf{not an isomorphism}, due to \cite%
{Prasolov-Non-additivity-MR2175366}, Proposition 6.1 and Corollary 6.5.
Contradiction.
\end{proof}

\bibliographystyle{alpha}
\bibliography{Cosheaves}

\end{document}